%% file: main-percolation-mdps.tex
\documentclass{article}

\input{preamble}

\title{Percolation Markov Decision Processes\footnote{Both authors acknowledge the support of ANR ZADYG (ANR-25-CE48-7058).}}

\author{%
    Melissa González García\footnote{Université Paris Dauphine-PSL. \textit{Disclaimer: }Co-funded by the European Union.~Views and opinions expressed are, however, those of the author only and do not necessarily reflect those of the European Union. Neither the European Union nor the granting authority can be held responsible for them.} \hspace{0.05cm} and  
    Guillaume Vigeral\footnote{Université Paris 1 Panthéon-Sorbonne, CNRS, Centre d’Economie de la Sorbonne, Paris, France.}
}

\date{}

\begin{document}
    
	\maketitle
    
    \begin{abstract}
		We study Percolation Markov Decision Processes (PMDPs), in which the decision maker repeatedly moves a token through $\Z^d$ with deterministic transitions and random payoffs assigned to the edges.~Payoffs are revealed before the beginning of the decision process and the decision maker aims to maximize the average accumulated payoff over a fixed horizon.~We establish the existence of the uniform value (as the horizon tends to infinity) and 0-optimal strategies.~In the particular case of Bernoulli payoffs, we establish continuity results for the uniform value.~We also present several one-dimensional examples to illustrate that optimal strategies may be very complex.~Then, we introduce a dimensional lifting property that allows PMDPs to be approximated by PMDPs of lower dimension and could be used to approximate critical probability thresholds in percolation theory.
	\end{abstract}
    
	\tableofcontents
    
    \newpage

	\textbf{Notation.}~$\N$ denotes the set of non-negative integers and $\N_+ := \N \setminus \{0\}$. For $x \in \R^d$, $|x| = (x_1^2 + \cdots + x_d^2)^{1/2}$ denotes the Euclidean norm. The origin is denoted by $\mathbf{0}$, the standard basis vectors of $\R^d$ are $e_1 = (1, 0, \ldots, 0), e_2 = (0, 1, \ldots, 0), \ldots, e_d = (0, 0, \ldots, 1)$, and $\boldsymbol{1}_d = (1, \ldots, 1)$. $\mathcal{B}(\R)$ denotes the Borel $\sigma$-algebra and $\mathcal{P}(\Z^d)$ the power set of $\Z^d$.~Let $(\Z^d,\E^d)$ be the graph with vertex set $\Z^d$, where two vertices $x,y \in \Z^d$ are connected by an edge if and only if $|x-y|=1$; when $d=2$, this graph is called the \emph{square lattice}.~Let $(\Omega, \mathcal{F}, \mu)$ be a measure space and let $f: \Omega \to \R$ be a measurable function. The \emph{essential supremum} of $f$ is defined by $\operatorname*{ess\,sup}(f) := \inf \{a \in \R : \mu(f^{-1}(a, \infty)) = 0\}$.~Analogously, $\operatorname*{ess\,inf}(f) := \sup \{a \in \R : \mu(f^{-1}(-\infty, a)) = 0\}$. For $a,b \in \mathbb{R}$, we write $a \gg b$ to indicate that $a$ is much larger than $b$.

	\section{Introduction}
		\emph{Percolation games}, introduced by Garnier and Ziliotto \cite{GarnierZiliotto2022}, are a variation of \emph{zero-sum stochastic games} (discrete-time dynamic games introduced by Shapley \cite{Shapley1953}) with random payoffs.~In these games, two players take turns to move a token along the vertices of $\mathbb{Z}^d$ over discrete stages.~Payoffs are assigned to the edges and are independently drawn from a distribution.~Before the game begins, the realization of the payoffs is revealed to both players.~Given an initial state $z \in \mathbb{Z}^d$ and a horizon $n \in \mathbb{N}$, the corresponding $n$-stage game has player~1 seeking to maximize the average payoff accumulated over the first $n$ stages, while player~2 seeks to minimize it; it is a \emph{zero-sum game}.~Let $v_n(z)$ denote the value of this game.~A fundamental question concerns its long-time behavior, specifically, whether the sequence $(v_n(z))_{n\ge1}$ converges almost surely as $n\to\infty$, or in other words, whether there exists a \emph{limit value}.~A foundational result of Garnier and Ziliotto \cite[Theorem 2.1]{GarnierZiliotto2022} establishes that, when the dynamics are \emph{oriented} (that is, every token's move has a positive projection onto a fixed direction vector) and the payoffs are independent and identically distributed (i.i.d.) across space, $v_n(z)$ converges almost surely as $n \to \infty$ to a deterministic limit that is independent of the initial position of the token.~Removing the orientation assumption introduces substantial difficulties, as shown by Sepúlveda and Ziliotto \cite{sepulveda2024gameorientedpercolation}, who studied a simple yet remarkably rich non-oriented percolation game for which the existence of a limit value remains open.
		
		This model is interesting for several reasons.~Despite the extensive literature on long-horizon zero-sum stochastic games (for general references see, e.g., \cite{Shapley1953, BewleyKohlberg1976, Chatterjee2009ASO, SolanVieille2015}), positive results on the existence of limit values for games with countable state spaces remain scarce. This makes percolation games a particularly appealing model.~They also belong to the broader class of \emph{random games}, in which a game is randomly selected, revealed to the players, and then played (see, e.g., \cite{Amiet2021, Flesch2023, Attia2025} for recent references). More significantly, they provide a well-suited framework for studying the stochastic homogenization of non-convex Hamilton--Jacobi equations \cite{Souganidis1999StochasticHO,Armstrong2012}, in the sense that new results for the aforementioned PDE problem can be obtained by studying the existence of limit values in percolation games \cite{GarnierZiliotto2022}. This connection originates in the work of Ziliotto \cite{Ziliotto2016}, who constructed the first example of a non-convex Hamilton--Jacobi equation that fails to homogenize in a stationary ergodic environment, based on a zero-sum differential game without a limit value; this was later extended to the second-order setting by Feldman, Fermanian, and Ziliotto \cite{FeldmanFermanianZiliotto2021}. These constructions are closely related in spirit to the percolation games studied in \cite{GarnierZiliotto2022}. More recently, Davini, Saona and Ziliotto~\cite{DaviniSaonaZiliotto2024} established positive homogenization results for a class of non-convex, non-coercive Hamilton--Jacobi equations using a game-theoretic and probabilistic approach directly associated to that developed for percolation games in \cite{GarnierZiliotto2022}. Connections to the field of probability have also been found: Sepúlveda and Ziliotto \cite{sepulveda2024gameorientedpercolation} gave a game characterization of the critical parameter of \emph{oriented percolation} on $\mathbb{Z}^2$ (a classic model in percolation theory \cite{DUMINILCOPIN2019}) by showing that a non-oriented percolation game undergoes a phase transition at the same critical parameter as oriented percolation.
		
		In this paper, we study the single-player version of percolation games with i.i.d. payoffs on the edges, which we refer to as percolation Markov decision processes (PMDPs).~Once a realization of the payoffs is fixed, a PMDP can be interpreted as an MDP \cite{blackwell62, HernndezLerma1996, Puterman2005-ah} with state space $\mathbb{Z}^d$ and state-independent, deterministic transitions.~The existence of the limit value, a uniform value (a limit value that can be approximately guaranteed for all sufficiently long finite horizons) and $0$-optimal strategies (strategies that guarantee this value in the limit without any error) for MDPs with countable state spaces, and the subsequent identification of simple 0-optimal strategies, are complex questions and important issues for applications (see \cite{renault2011uniform, kiefer2020how}, \cite[Section 2.5.3]{LarakiSorin2015}, and the references therein).~In this paper, we address these and other questions for PMDPs.~Our results are summarized as follows.
		
		In PMDPs, aside from the oriented case, we first show that there are only two other possibilities: either the PMDP is trivial (meaning that the token never moves), or it is controllable (in the sense that there exists a finite sequence of actions that allows the decision maker to cycle indefinitely through finitely many positions). The existence of the limit value for oriented PMDPs follows from the corresponding result for oriented percolation games \cite[Theorem 2.1]{GarnierZiliotto2022}. Our first result, Theorem \ref{theorem:1-player}, establishes the existence of the limit value for PMDPs in the other two regimes.
		
		Furthermore, for PMDPs we prove the existence of a \emph{uniform value}, meaning that there is a strategy that nearly guarantees the limit value for all sufficiently large horizons. Perhaps most surprisingly, we prove the existence of a \emph{$0$-optimal strategy} in Theorem \ref{theorem:main-uniform}. More precisely, we construct a block-based strategy that ensures the limit value in the infinite-horizon PMDP, in which the payoff is defined as the limit inferior of the average of the stage payoffs.~In particular, we show that the values of the infinite-horizon PMDP with limit inferior and limit superior criteria coincide with the limit value and thus that the uniform value exists.
        
        We also show that, for nontrivial PMDPs with Bernoulli payoffs, the uniform value is Lipschitz-continuous with respect to the Bernoulli parameter on $[p_0,1]$ for all $p_0 > 0$. If, in addition, the PMDP is oriented, the uniform value is continuous at $p = 0$.
        
		How simple can a $0$-optimal strategy be?~In Theorem \ref{theorem:oriented-stationary} we demonstrate that 0-optimal strategies for oriented PMDPs can be chosen to be \emph{stationary}, that is, they depend only on the current position of the token at each stage. This need not be the case for controllable PMDPs.~Furthermore, we explicitly construct stationary 0-optimal strategies and compute the uniform value for certain one-dimensional oriented PMDPs with Bernoulli payoffs.~We find that, while these examples appear structurally simple, explicit computations are highly nontrivial in certain cases.~The 0-optimal strategies may require observing payoffs arbitrarily far ahead, and even a minor modification of the action set can drastically complicate the analysis.
		
		The explicit analysis of these examples raises a natural question:~how does the structure of the action set influence the uniform value, and is it possible to approximate PMDPs using PMDPs of lower dimension?~We establish this phenomenon, which we refer to as the \emph{dimensional lifting} property, for a class of two-action, one-dimensional PMDPs with general payoff distributions.~We then apply it to approximate the critical probability of oriented Bernoulli percolation on the square lattice \cite{Durrett1984Oriented} via a sequence of values of one-dimensional PMDPs.
 
        Finally, oriented PMDPs naturally link with directed last-passage percolation (LPP) models \cite{randomgrowthmodels2016}.~For interested readers, Section \ref{section:lpp} briefly reviews these probabilistic models and explores their connections to PMDPs.

		\textbf{Outline of the paper.} In Section~\ref{section:model}, we introduce the model in detail, classify PMDPs into the three possible regimes, state our main results, and present several examples. Next in Section~\ref{section:proof_main_results}, we prove our main results: the existence of the limit value (Theorem~\ref{theorem:1-player}), the existence of a $0$-optimal strategy (Theorem~\ref{theorem:main-uniform}), and the existence of $0$-optimal stationary strategies for oriented PMDPs (Theorem~\ref{theorem:oriented-stationary}).~Then, in Section~\ref{section:bernoulli}, we consider PMDPs with Bernoulli-distributed payoffs.~We establish continuity results for the uniform value with respect to the Bernoulli parameter, provide explicit computations of the uniform value for certain one-dimensional oriented PMDPs (the proof of the most complex one being relegated to the appendix), and introduce a natural preorder on the positive action sets from which we deduce several relations.~In Section~\ref{section:dimensional-lifting-oriented-percolation}, we study a sequence of one-dimensional PMDPs exhibiting the dimensional lifting property, comment on possible extensions, discuss the relationship between a limiting two-dimensional PMDP and oriented percolation and discuss simulations.~In Section~\ref{section:lpp}, LPP models are briefly presented and their connections to the PMDPs are discussed.~In Section~\ref{section:perspectives}, we present future research directions.%~The derivation of a 0-optimal strategy and the uniform value for a single example is given in the appendix.
	
	\section{Percolation MDPs: Model, Main Results and Examples}\label{section:model}

		\subsection{The model}
		We focus on \emph{percolation MDPs} (PMDPs) derived from the \emph{percolation games} model introduced by Garnier and Ziliotto in \cite{GarnierZiliotto2022}, by restricting one player's set of actions to a singleton.~For completeness and clarity, we present it in detail, following a structure similar to that of the original model.

		A PMDP on $\Z^d$ is described by a tuple $(I, \mathcal{E}, g)$, where
		\begin{itemize}[noitemsep,topsep=0pt]
			\item[-] $I \subset \Z^d$ is a non-empty and finite set representing the player's action set (independent of state $z$).
			\item[-] $\mathcal{E} = (\Omega, \F, \Pbb)$ is a probability space. 
			\item[-] $g := \{\omega \mapsto g_{\omega}(z,i)\}_{(z,i) \in \Z^d \times I}$ is a collection of uniformly bounded, independent and identically distributed (i.i.d.) $\R$-valued random payoffs defined on $\Omega$, with expectation $\E[g]$.
		\end{itemize}

		Given some initial state $z \in \Z^d$ and $\omega \in \Omega$, the decision process proceeds as follows:
		\begin{itemize}[noitemsep,topsep=0pt]
			\item[-]~The decision maker is informed of the realization of the payoffs given by $\omega$.
			\item[-]~At every stage $m \ge 1$, the decision maker observes the current state $z_m$ (where $z_1 = z$), chooses an action $i_m$, and receives the payoff $g_{\omega}(z_m, i_m)$. Then, $z_{m + 1}$ is determined by $z_{m+1} = z_m + i_m$.
		\end{itemize}
         For $z \in \Z^d$ the PMDP starting at $z$ is denoted by $\Gamma(z)$.
         
		\begin{remark}\label{remark:MDPs}
			Once $\omega$ is fixed, there is no randomness.~The decision maker faces a deterministic optimization problem with full knowledge of the payoffs.~In this sense, it is a Markov Decision Process (MDP) with state-independent and deterministic transitions \cite{HernndezLerma1996, Puterman2005-ah} or a dynamic programming problem \cite[Section 1]{renault2011uniform}.% This motivates the ``MDP'' part of the name. 
		\end{remark}
        
		We define the \emph{observable environment $\sigma$-algebra} $\mathcal{F}_{\mathrm{all}} \subseteq \F$ as the $\sigma$-algebra generated by the payoff field; that is, $\mathcal{F}_{\mathrm{all}} = \sigma\big( \{ \omega \mapsto g_\omega(z, i) : z \in \mathbb{Z}^d, i \in I\} \big)$. A \emph{pure environment-dependent strategy} is a sequence of mappings $\sigma = (\sigma_m)_{m\geq 1}$, where for each $m \geq 1$:
		\begin{itemize}[noitemsep,topsep=0pt]
			\item[\textup{(i)}] $\sigma_m(\omega, z) \in I$ for all $(\omega, z) \in \Omega \times \mathbb{Z}^d$.
			\item[\textup{(ii)}] For each $m \geq 1$, $\sigma_m$ is measurable with respect to the product $\sigma$-algebra $\mathcal{F}_{\mathrm{all}} \otimes \mathcal{P}(\mathbb{Z}^d)$.
		\end{itemize}
		The set of such strategies is denoted by $\Sigma$. Every strategy $\sigma \in \Sigma$ and initial state $z \in \Z^d$ induce a play: $(z_m, i_m)_{m \geq 1}$ with $z_1 := z$, $i_1 := \sigma_1(\omega,z)$, and $z_m=z_{m-1}+i_{m-1}$, $i_m=\sigma_m(\omega,z), m \geq 2$. Intuitively, the strategy specifies the action at every stage based on the initial position of the token and the full knowledge of the payoff configuration given by $\omega$. 

		The $n$-stage decision process, denoted by $\Gamma_n^{\omega}(z)$, has payoff function $\gamma_n^{\omega} \colon \Z^d \times \Sigma \to \R$ defined by
		\[
			\gamma_n^{\omega}(z, \sigma) := \frac{1}{n}\sum_{m=1}^ng_{\omega}(z_m, i_m),
		\]
		and value function
		\[
			v_n^{\omega}(z) := \max_{\sigma \in \Sigma} \gamma_n^{\omega}(z, \sigma).
		\]

		In the sequel, $v_n(z) \colon \mathcal{E} \to (\R, \mathcal{B}(\R))$ denotes the random variable $\omega \mapsto v_n^{\omega}(z)$.

		It is natural to consider the corresponding model with payoffs on the vertices.

		\begin{definition}[Vertex-based payoffs model]\label{definition:def_onthevertices}
			A \emph{PMDP  with payoffs on the vertices} is defined as a variation of the edge-payoffs model by letting $g(z, i) = g(z + i)$ for all $z \in \Z^d$ and $i \in I$, where $\{\omega \mapsto g_{\omega}(z)\}_{z \in \Z^d}$ is a collection of uniformly bounded i.i.d.~$\R$-valued random variables defined on $\Omega$.
		\end{definition}

		This alternative model is not a particular case of PMDPs since it introduces dependencies among the edge payoffs.~However, all our proofs (established in the edge-payoff case) can easily be reformulated in the vertex-payoff case. Examples in both frameworks will be presented at the end of this section, and in Section \ref{subsection:explicit-examples}.

		\subsection{Structural characterization}
        
		For this subsection, payoffs may be disregarded.

		The transition structure given by the actions in $I$ naturally induces a directed graph on the state space.

		\begin{definition}[Transition graph]
			The \emph{transition graph} of the PMDP is a directed graph with vertex set $V = \mathbb{Z}^d$ and edge set $E = \{(z, z+a) \mid z \in \mathbb{Z}^d,\ a \in I\}$.
		\end{definition}

		A key characteristic of the set $I$ is whether the induced transition graph contains cycles, which determines whether the token can return to a previously visited state. This yields the following classification.

		\begin{definition}[Classification of PMDPs]\label{def:classification}
			A PMDP with action set $I \subset \mathbb{Z}^d$ is classified as:
			\begin{itemize}[noitemsep,topsep=0pt]
				\item[-] \textit{trivial} if $I = \{0\}$ (the state never changes);
				\item[-] \textit{oriented} if $0 \notin \operatorname{conv}(I)$ (the transition graph is acyclic, so the token can never revisit states);
				\item[-] \textit{controllable} if $0 \in \operatorname{conv}(I)$ and $I \neq \{0\}$ (the transition graph contains cycles, allowing the token to revisit states without being restricted to do it).
			\end{itemize}
		\end{definition}

		Clearly, the following geometric characterization holds.

		\begin{lemma}\label{prop:structural_characterization}
			Consider a PMDP  with action set $I \subset \mathbb{Z}^d$. 
			\begin{itemize}[noitemsep,topsep=0pt]
				\item[-] If it is \textit{oriented}, there exists a direction $u \in \mathbb{R}^d \setminus \{0\}$ such that $a \cdot u > 0$ for all $a \in I$. In particular, the projection of the state onto $u$ strictly increases at each step. 
				\item[-] If it is \textit{controllable}, there exists a finite sequence of actions such that repeating it keeps the trajectory bounded.
			\end{itemize}
		\end{lemma}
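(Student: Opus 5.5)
For the oriented case, I would use the separating hyperplane theorem. Since $I$ is finite, $\operatorname{conv}(I)$ is a compact convex polytope, and by assumption it does not contain $0$. Strict separation of the compact convex set $\operatorname{conv}(I)$ from the point $0$ then gives $u \in \mathbb{R}^d\setminus\{0\}$ and $c>0$ with $x\cdot u \ge c$ for all $x\in\operatorname{conv}(I)$. A direct choice is to take $u$ to be the point of $\operatorname{conv}(I)$ of minimal Euclidean norm. It is nonzero since $0\notin\operatorname{conv}(I)$, and the standard first-order optimality condition for projecting onto a convex set gives $x\cdot u\ge |u|^2>0$ for all $x\in\operatorname{conv}(I)$. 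In particular $a\cdot u>0$ for every $a\in I$. The second claim follows because $z_{m+1}\cdot u = z_m\cdot u + i_m\cdot u$ and $i_m\cdot u\ge \min_{a\in I} a\cdot u>0$. This also shows the transition graph is acyclic, which justifies the parenthetical remark in Definition~\ref{def:classification}.

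For the controllable case, $0\in\operatorname{conv}(I)$, so $0=\sum_{a\in I}\lambda_a a$ with $\lambda_a\ge 0$ and $\sum_a\lambda_a=1$. The key step is to replace the real coefficients by nonnegative integers that are not all zero. If $0\in I$, the one-action sequence $(0)$ already works. Otherwise, the set of $\lambda\in\mathbb{R}^{I}$ with $\lambda\ge0$, $\sum_a\lambda_a a=0$ and $\sum_a\lambda_a=1$ is a nonempty polytope defined by linear equalities and inequalities with integer (hence rational) coefficients. Its vertices are therefore rational, being solutions of linear systems with rational data (Cramer's rule). Choosing a vertex $\lambda^*$ and clearing denominators gives nonnegative integers $k_a$, not all zero, with $\sum_a k_a a=0$.

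The finite sequence consists of each action $a$ repeated $k_a$ times, in any order, for a total length $K=\sum_a k_a\ge1$. After one full pass the displacement is $\sum_a k_a a=0$, so the token returns to its starting point. Repeating the sequence therefore makes the trajectory periodic, and it stays within distance $K\max_{a\in I}|a|$ of the start, hence bounded.

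The only step that needs care is the rationality argument in the controllable case. Real convex coefficients do not directly give a finite action sequence, so one has to pass to rational, and then integer, coefficients. I would present it through the vertex argument above. An equivalent route uses Carathéodory's theorem to reduce to an affinely independent subset of $I$, for which the coefficients are uniquely determined by a nonsingular rational system and are therefore rational.
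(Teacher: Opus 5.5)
Your proposal is correct and follows essentially the same route as the paper. For the oriented case, both use separation of $0$ from the compact convex set $\operatorname{conv}(I)$; for the controllable case, both pass to rational and then integer convex coefficients, giving a finite action sequence with zero total displacement. Your vertex/Cramer's-rule argument (or the Carath\'eodory variant) justifies the rationality step, which the paper only asserts (``$\alpha_i$ can be taken in $\mathbb{Q}$'').
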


		\begin{proof}
			If the PMDP is oriented, the claim follows by the separation theorem \cite[Corollary 11.4.2]{rockafellar1997convex}.
			
			Let $I = \{a_1, \ldots, a_k\}$ with $k \geq 1$. If the PMDP is controllable, since $0 \in \operatorname{conv}(I)$, there exist $\alpha_i \geq 0$ with $\sum_{i=1}^k \alpha_i = 1$ such that $\sum_{i=1}^k \alpha_i a_i = 0$.~As $a_i \in \mathbb{Z}^d$, $\alpha_i $ can be taken in $\Q$ and clearing denominators yields non-negative integers $(\lambda_i)$, not all zero, satisfying $\sum_{i=1}^k \lambda_i a_i = 0$. This means that the sequence returns the token to its starting state, wherever that starting state is.

		\end{proof}

		\begin{remark}\label{remark:translation-invariant}
			Since the action set $I$ is state-independent, the transition graph is translation-invariant. Equivalently, it looks the same from every vertex in the transition graph.~Consequently, if there is a directed cycle through a vertex, there is the same cycle through any vertex. 
		\end{remark}
			
		% Given the importance of cycles in the transition graph, we introduce the following terminology. 

		\subsection{Main Results}
		A central question in this model concerns the almost sure convergence of the sequence $(v_n(z))_{n \geq 1}$, that is, the existence of a \emph{limit value}. A second question is whether this limit is deterministic. The foundational result in this regard is Theorem~2.1 in \cite{GarnierZiliotto2022}.
		They established the existence of the limit value for oriented percolation games, which intuitively corresponds to a class of games where the token moves in a fixed direction regardless of players' actions. Below, we state the PMDP version of their result.

		\begin{theorem}[1-player version of Theorem 2.1 and remark below in \cite{GarnierZiliotto2022}]\label{theorem:iid_and_oriented_percolation_games}
			Consider an oriented PMDP. Then, for all $z \in \mathbb{Z}^d$, the sequence $v_n(z)$ converges $\Pbb$-almost surely to a constant $v_{\infty} \in \mathbb{R}$ as $n \to \infty$, independent of $z$. Moreover, there exist constants $A = A(d, \|I\|_{\infty}, \|g\|_{\infty}) > 0$ and $B = (8\|g\|_{\infty}^2)^{-1}$ such that for all $n \geq 1$, $z \in \mathbb{Z}^d$, and $\lambda \geq 0$,
			\[
				\Pbb\left(|v_n(z) - v_{\infty}| \geq \lambda + A\ln(n + 1) n^{-1/2} \right) \leq \exp(-B \lambda^2 n).
			\]
			In particular, $|v_{\infty} - \mathbb{E}v_n| \leq A\ln(n + 1) n^{-1/2}$.
		\end{theorem}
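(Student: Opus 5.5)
The plan is to derive the statement as a direct specialization of \cite[Theorem 2.1]{GarnierZiliotto2022}, while checking that each ingredient of their proof survives the passage from two players to one. The formal reduction comes first. Given an oriented PMDP $(I,\mathcal{E},g)$, we build the percolation game in which player~1 has action set $I$ and player~2 has a singleton action set. Concretely, player~2's move is the zero vector, or is absorbed into player~1's turn, depending on the convention in \cite{GarnierZiliotto2022}. Player~2 then has no influence, so the min--max value of each $n$-stage game equals $\max_{\sigma\in\Sigma}\gamma_n^\omega(z,\sigma)=v_n^\omega(z)$. It remains to check the game's hypotheses. The payoffs are i.i.d.\ and uniformly bounded by $\|g\|_\infty$. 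Orientation holds by the first item of Lemma~\ref{prop:structural_characterization}, which gives $u\neq 0$ with $a\cdot u>0$ for all $a\in I$. If Garnier--Ziliotto use alternating moves, we may pair each stage into one combined move, so that every composite move still has positive projection on $u$; the remark following their theorem covers exactly this one-player / simultaneous-move variant.

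If one prefers a self-contained argument, the proof I would follow has three steps.

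\textbf{(i) Concentration.} The map $\omega\mapsto v_n^\omega(z)$ depends only on the finitely many payoffs reachable in $n$ steps. Changing a single payoff $g(z',i)$ changes $n v_n$ by at most $2\|g\|_\infty$. Moreover, because the dynamics are oriented, each optimal path uses each edge at most once. A martingale (Azuma/McDiarmid-type) argument, exploring edges layer by layer along the direction $u$, then yields
\[
\Pbb\bigl(|v_n(z)-\E v_n(z)|\ge\lambda\bigr)\le \exp(-B\lambda^2 n), \qquad B=(8\|g\|_\infty^2)^{-1}.
\]
The key point is to bound the sum of squared increments by $(2\|g\|_\infty)^2 n$ rather than by the number of edges; this is where orientation enters, since the optimal path touches at most one edge per layer.

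\textbf{(ii) Approximate superadditivity.} Since $I$ and the law of $g$ are translation-invariant, $\E v_n(z)$ does not depend on $z$. Concatenating an optimal $n$-path with an optimal $m$-path started at its endpoint gives $(n+m)v_{n+m}(z)\ge n v_n(z)+m v_m(z_{n+1})$. The difficulty is that $z_{n+1}$ is random and correlated with the payoffs, so taking expectations does not immediately give $\E v_m$. To handle this, I would use the concentration from (i) together with a union bound over the polynomially many (at most $(2n\|I\|_\infty+1)^d$) reachable endpoints. This gives
\[
\E\min_{z'} v_m(z')\ge \E v_m - C\sqrt{\ln n/m},
\]
hence an approximate superadditivity $(n+m)\E v_{n+m}\ge n\E v_n+m\E v_m - Cm\sqrt{\ln(n+m)/m}$. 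A standard Hammersley-type lemma then shows that $\E v_n$ converges to some $v_\infty$ at rate $A\ln(n+1)n^{-1/2}$, with $A$ depending on $d$, $\|I\|_\infty$ and $\|g\|_\infty$. An upper bound with the same error is obtained symmetrically, by splitting an optimal $(n+m)$-path at time $n$ and again union-bounding over the intermediate positions.

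\textbf{(iii) Conclusion.} Combining (i) and (ii) gives the stated tail bound. Summability of $\exp(-B\lambda^2 n)$ and Borel--Cantelli then give almost sure convergence to the constant $v_\infty$, independently of $z$.

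The main obstacle is step (ii), namely controlling the rate in the presence of the random, payoff-dependent intermediate position; this is exactly what forces the $\ln(n+1)$ factor. In the write-up, however, I would simply invoke \cite{GarnierZiliotto2022} after the reduction above, which makes that step routine.
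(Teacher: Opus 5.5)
Your proposal is correct and takes the same route as the paper. The paper gives no separate proof: it imports the statement from \cite[Theorem 2.1]{GarnierZiliotto2022} as the one-player specialization (the second player has a single action), exactly as in your reduction. The hypotheses hold because orientation is given by Lemma~\ref{prop:structural_characterization}, and the PMDP's i.i.d.-across-$(z,i)$ assumption is stronger than the across-states assumption used there (Remark~\ref{remark:i.i.d.}).

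One caveat concerns only your optional self-contained sketch. Slabs along $u$ that each path crosses at most once must have width at most $\min_{a\in I} a\cdot u$, so about $n\max_{a\in I} a\cdot u/\min_{a\in I} a\cdot u$ of them are needed. Azuma as sketched then yields a constant $B$ that depends on $I$, rather than $(8\|g\|_\infty^2)^{-1}$, unless all actions have the same projection on $u$.
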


		\begin{remark}\label{remark:i.i.d.}
			In contrast to the 2-player setting, we assume that payoffs are i.i.d.\ across both states and actions. This differs from the definition of i.i.d.\ percolation games in \cite[Definition 2.2]{GarnierZiliotto2022}, where payoffs are required to be i.i.d~only across states. We impose this stronger assumption to establish the existence of the limit value without relying on the orientation assumption. And still, with this assumption the existence of the limit value for non-oriented percolation games is an open problem.~Further discussion of the challenges that arise for PMDPs when payoffs are assumed to be i.i.d.~only across states is provided in the Perspectives.
		\end{remark}

		The proof of Theorem \ref{theorem:iid_and_oriented_percolation_games} relies heavily on the orientation assumption, which prevents the token from revisiting states.~In the 2-player setting, removing this assumption introduces significant difficulties, as shown in the analysis of a non-oriented percolation game in \cite{sepulveda2024gameorientedpercolation}. However, for PMDPs, the absence of an adversary simplifies the analysis. Our first result establishes the existence of the limit value for general sets of actions.

		\begin{theorem}\label{theorem:1-player}
			For $\Gamma(z)$, the sequence $v_n(z)$ converges $\Pbb$-almost surely to $v_{\infty}(z)$ as $n \to \infty$.~Moreover,
			\[
				v_{\infty}(z) =
					\begin{cases}
						g(z,0) & \text{if } I = \{0\}, \\ 
						M & \text{if there is controllability,}
					\end{cases}
			\]
			where $M$ denotes the essential supremum of $g$.
		\end{theorem}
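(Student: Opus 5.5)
The trivial case is immediate. If $I=\{0\}$, the token stays at $z$ forever, so the only play yields $\gamma_n^\omega(z,\sigma)=g_\omega(z,0)$ for every $n$. Hence $v_n(z)=g(z,0)$ identically, and the limit exists surely.

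For the controllable case, the upper bound is free. Since payoffs are bounded by their essential supremum, $v_n(z)\le M$ almost surely for every $n$. Taking a countable union of null sets (countably many edges $(z',i)$), we get almost surely $g_\omega(z',i)\le M$ for all $(z',i)$, and so $\limsup_n v_n(z)\le M$. It therefore remains to show that $\liminf_n v_n(z)\ge M-\varepsilon$ almost surely for each rational $\varepsilon>0$.

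For the lower bound, I will use Lemma~\ref{prop:structural_characterization}. It provides a finite word $c=(a_{j_1},\dots,a_{j_L})$ of actions with $\sum_\ell a_{j_\ell}=0$. Moreover, $c$ is not the all-zero word unless $0\in I$.

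\begin{itemize}
\item If $0\in I$, the argument is easiest. Pick any $a\in I\setminus\{0\}$ and consider the ray $z+ka$, $k\ge0$. The edges $(z+ka,0)$ are i.i.d.\ with $\Pbb(g\ge M-\varepsilon)=:q>0$ by definition of the essential supremum. So by Borel--Cantelli almost surely some $k_0$ has $g_\omega(z+k_0a,0)\ge M-\varepsilon$. The strategy walks $k_0$ steps along $a$ and then plays $0$ forever. This gives $\gamma_n\ge M-\varepsilon-2k_0\|g\|_\infty/n$, hence $\liminf v_n\ge M-\varepsilon$.
\item In general the loop $c$ visits $L$ edges $(y+s_\ell,a_{j_{\ell+1}})$, where $s_\ell$ are partial sums. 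These $L$ edges are distinct edges of the graph, because an edge is determined by the pair (position, action); if positions repeat with the same action I will shorten $c$ to a minimal cycle, which uses distinct edges. Translates of this cycle along a direction $a\in I\setminus\{0\}$ by $kTa$, with $T$ larger than the diameter of the cycle, are pairwise edge-disjoint. Hence the events ``all $L$ edges of the $k$-th translate have payoff $\ge M-\varepsilon$'' are independent with probability $q^L>0$. Again almost surely one occurs, at some finite $k_0$. The strategy moves there in finitely many steps (repeating $a$) and then cycles $c$ forever.
\end{itemize}

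The averaging bound then gives $\liminf_n v_n(z)\ge M-\varepsilon$. Intersecting over rational $\varepsilon$ yields $v_n(z)\to M$ almost surely.

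The main obstacle is making sure the cycle's edges are genuinely distinct, so that the i.i.d.\ assumption across edges (Remark~\ref{remark:i.i.d.}) gives the positive probability $q^L$ and independence across translates. Extracting a minimal directed cycle from the closed walk $c$ handles this, since a minimal cycle visits distinct vertices and thus uses distinct edges. The remaining technicality is that the starting point must be reachable: I use the translates along a ray of some nonzero action $a$, which the token can follow from $z$. If $I$ contains $0$ only together with other actions, the first case already applies.
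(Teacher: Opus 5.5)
Your treatment of the trivial and controllable cases is correct and follows the paper's argument. You bound $v_n\le M$. You then use independence across pairwise disjoint, reachable translates of a fixed cycle to find, almost surely, a cycle all of whose payoffs are at least $M-\varepsilon$, walk there, and loop forever. You are more careful than the paper on two points it leaves implicit. First, you extract a simple cycle so that its edges are distinct, which makes the probability $q^L$ correct. Second, you exhibit the disjoint reachable translates explicitly as $C+kTa$ along a ray of a nonzero action. Your separate case $0\in I$ is just the special instance in which the cycle is a self-loop, so it is harmless but not needed.

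What is missing is the oriented regime. The first sentence of the theorem asserts almost sure convergence of $v_n(z)$ for every PMDP, not only the trivial and controllable ones. When $0\notin\operatorname{conv}(I)$ the transition graph is acyclic, so your cycle construction does not apply, and your proposal never addresses this case. The paper handles it by invoking Theorem~\ref{theorem:iid_and_oriented_percolation_games} (the one-player version of Garnier--Ziliotto), which gives almost sure convergence to a deterministic $v_\infty$. You need to add that step for the proof to cover the full statement.
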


		The second (and main) result of this paper establishes the existence of a strategy that guarantees the limit value exactly, which in particular implies the existence of the uniform value. Before stating this result, we introduce the notions of the uniform value and of an $\varepsilon$-optimal strategy.

		\begin{definition}[uniform value and  $\varepsilon$-optimality, see Definition 2.3 and Claim 2.4 in \cite{renault2011uniform}]\label{def:uniform_value}
			For every $z \in \Z^d$, we say that $\Gamma(z)$ has a \emph{uniform value} if the limit value $v_{\infty}(z)$ exists and, for every $\varepsilon > 0$ it holds that
			\[
				\exists \sigma \in \Sigma \text{ such that } \exists n_0 \in \N, \ \forall n \geq n_0, \; \gamma_n(z, \sigma) \geq v_{\infty}(z) - \varepsilon, \;\Pbb\text{-a.s.}
			\]
			In this case, we say that $v_{\infty}(z)$ is the uniform value and for a fixed $\varepsilon > 0$, that $\sigma$ is \emph{$\varepsilon$-optimal}.
		\end{definition}

		The existence of a uniform value requires that, for every desired error $\varepsilon$, there exists a strategy whose performance remains $\varepsilon$-close to the limit value for all sufficiently large horizons.~A classical example illustrating the distinction between these two notions of value for MDPs with countable state spaces is provided by \cite[Example in Section 2]{Lehrer1992}, for which the limit value exists, but the uniform value does not. 

		\begin{theorem}\label{theorem:main-uniform}
			For $\Gamma(z)$, there exists a 0-optimal strategy that attains the limit value, that is, 
			$$
				\exists \sigma \in \Sigma \colon \gamma_n(z, \sigma) \to v_{\infty}(z) \text{ as } n \to \infty, \;\Pbb\text{-a.s.}
			$$
			In particular, $\Gamma(z)$ has a uniform value which, except for the trivial PMDP, is deterministic and independent of the initial state $z$.
		\end{theorem}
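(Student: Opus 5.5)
We treat the three regimes of Definition~\ref{def:classification} separately. The trivial case is immediate: the only strategy plays $0$ forever, so $\gamma_n(z,\sigma)=g(z,0)=v_n(z)$ for all $n$. The uniform value then equals $g(z,0)$. It is random and depends on $z$, which is why the statement excludes this case.

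\textbf{Controllable case.} Theorem~\ref{theorem:1-player} gives $v_\infty(z)=M$, so we need a strategy $\sigma$ with $\gamma_n(z,\sigma)\to M$ almost surely. By Lemma~\ref{prop:structural_characterization} there is a finite cycle word $w=(a_1,\dots,a_L)\in I^L$ with $\sum a_j=0$. We use it as follows.

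\begin{enumerate}
\item Fix $\varepsilon_k\downarrow 0$. Since $\Pbb(g\ge M-\varepsilon_k)>0$ and the edge payoffs are i.i.d., Borel--Cantelli shows that almost surely, for each $k$, some cycle (a translate of $w$ applied from some vertex) has all $L$ payoffs at least $M-\varepsilon_k$. Fixing one cycle and one pair $(y,a)$ does not work, because the cycle's other edges may be bad.
\item A cleaner device is available because $0\in\operatorname{conv}(I)$. For any vertex $y$ and action $a\in I$, we can return from $y+a$ to $y$ using the remaining letters of a cycle containing $a$. Such a cycle exists when $a$ appears with positive coefficient $\lambda_a$; otherwise we pick a different action set up.
\item The cleanest route uses only steps 1 and 2: let $C_k$ be a good cycle for $\varepsilon_k$ (all payoffs $\ge M-\varepsilon_k$). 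Then $C_k$ is a closed walk of fixed length $L$, located at finite but random distance $D_k$ from the current position.
\item The strategy works in phases. In phase $k$, walk to $C_k$ along any path of length $D_k'$, which exists because the controllable transition graph reaches that vertex. Then loop around $C_k$ for $N_k$ rounds, with $N_k$ chosen measurably after $\omega$ is revealed so that $N_k L\ge k(\,D_{k+1}'+\sum_{j\le k}(D_j'+N_jL))$.
\end{enumerate}

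The averages then behave well. During phase $k$ the transit costs are negligible, and while looping on $C_k$ the running average is at least $M-\varepsilon_{k-1}-O(1/k)$, because the prefix, including phase $k-1$'s loop, is at least that good. Hence $\liminf\gamma_n\ge M$, while $\gamma_n\le M$ almost surely trivially. All choices are measurable functions of the payoff field, so $\sigma\in\Sigma$.

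The main technical point in this case is reachability. In a controllable PMDP the token cannot necessarily reach every vertex; it can only reach the coset lattice generated by $I$ restricted to the cone. I will handle this by choosing good cycles only among translates $y+C$ with $y$ in the set reachable from the current point. Since $0\in\operatorname{conv}(I)$, the reachable set from any point contains a translate of the full subgroup generated by the cycle's actions, which is infinite. Independence over infinitely many disjoint translates then suffices for Borel--Cantelli.

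\textbf{Oriented case.} This is the genuine obstacle. Theorem~\ref{theorem:iid_and_oriented_percolation_games} gives $v_n(z)\to v_\infty$ almost surely together with concentration. The plan is a block strategy with block lengths $n_k\uparrow\infty$ growing slowly, for instance $n_{k+1}/\sum_{j\le k}n_j\to 0$ with $\sum_k e^{-Bn_k\lambda_k^2}<\infty$. At the start of block $k$, at position $y_k$, play an optimal strategy of $\Gamma_{n_k}(y_k)$. The difficulty is that $y_k$ is random and depends on $\omega$, so the concentration bound cannot be applied directly. To handle this, take a union bound over all possible $y_k$, of which there are at most $(\|I\|_\infty S_k)^d$ with $S_k=\sum_{j<k}n_j$. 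By stationarity each satisfies $\Pbb(v_{n_k}(y)<v_\infty-\lambda_k-A\ln(n_k+1)n_k^{-1/2})\le e^{-Bn_k\lambda_k^2}$. Choosing, say, $n_k=k^2$ and $\lambda_k\to0$ slowly makes $\sum_k S_k^d e^{-Bn_k\lambda_k^2}<\infty$, so almost surely every block eventually has average at least $v_\infty-o(1)$. Because blocks are small relative to the elapsed time, intermediate times $n$ are also controlled, which gives $\liminf\gamma_n\ge v_\infty$. Finally, $\limsup\gamma_n\le\limsup v_n=v_\infty$. Combined with the controllable case, this yields a deterministic, $z$-independent uniform value in the nontrivial regimes.
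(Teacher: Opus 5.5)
Your proposal is correct and follows essentially the paper's route. In the oriented case, you play a finite-horizon optimal strategy on blocks of growing length and control the random entry point by a union bound over the polynomially many reachable positions, combined with the concentration estimate and Borel--Cantelli; the paper packages this as Lemma~\ref{lemma:equicontinuity} and uses block length $k$. In the controllable case, you alternate search and loop phases with $\varepsilon_k\downarrow 0$, and choose staying times after $\omega$ is revealed so that they dominate both the elapsed time and the next search time, which is the paper's reset $L_k=2^k(T_{k-1}+\tau_k+\tau_{k+1})$. One slip to fix: your condition $N_kL\ge k\bigl(D'_{k+1}+\sum_{j\le k}(D'_j+N_jL)\bigr)$ contains $kN_kL$ on the right and so cannot be satisfied; the $N_jL$ terms should be summed over $j<k$ only.
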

        
		Our third result shows that, for oriented PMDPs, there exists a 0-optimal strategy that depends only on the token's current position and the realized payoffs, that is, a stationary 0-optimal strategy.

		\begin{definition}\label{def:stationary}
			A strategy $\sigma = (\sigma_m)_{m \geq 1} \in \Sigma$ is \emph{stationary} if there exists a mapping $\sigma^*: \Omega \times \Z^d \to  I$ such that for all $m \geq 1$, $\sigma_m(\omega, z) = \sigma^*(\omega, z_m)$, where $z_m$ is the state at stage $m$ induced by $\sigma$ starting from $z$.
		\end{definition}

		\begin{theorem}\label{theorem:oriented-stationary}
			Every oriented PMDP admits a stationary 0-optimal strategy. 
		\end{theorem}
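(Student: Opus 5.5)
The plan is to take the $0$-optimal strategies given by Theorem~\ref{theorem:main-uniform}, one for each initial state, and glue their plays into a single position-dependent rule. Orientation is what makes this possible. By Lemma~\ref{prop:structural_characterization} there is $u$ with $a\cdot u>0$ for all $a\in I$, so $z_m\cdot u$ strictly increases along any play. Hence every play visits each vertex at most once, and the action taken at a vertex of a given play is unambiguous. The only difficulty is that plays started from different points may disagree at a common vertex, and I will resolve this by coalescence.

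\textbf{Step 1 (one optimal play per starting point).} Enumerate $\Z^d=\{z^1,z^2,\dots\}$. By Theorem~\ref{theorem:main-uniform}, for each $k$ there is $\sigma^{(k)}\in\Sigma$ with $\gamma_n(z^k,\sigma^{(k)})\to v_\infty$ $\Pbb$-a.s., where $v_\infty$ is the deterministic constant of Theorem~\ref{theorem:iid_and_oriented_percolation_games}. Intersecting countably many full-measure events gives a single event $\Omega_0$ of probability one on which all of these convergences hold simultaneously. Denote by $(z^k_m,i^k_m)_{m\ge1}$ the play induced by $\sigma^{(k)}$ from $z^k$.

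\textbf{Step 2 (coalescing construction).} Define $\sigma^*(\omega,\cdot)$ inductively, keeping track of a set $A_{k}$ of already-assigned vertices, with $A_0=\emptyset$.
\begin{itemize}[noitemsep,topsep=0pt]
\item[] At step $k$, let $\tau_k=\inf\{m\ge1: z^k_m\in A_{k-1}\}\in\N_+\cup\{\infty\}$.
\item[] For $m<\tau_k$, set $\sigma^*(\omega,z^k_m):=i^k_m$.
\item[] Set $A_k:=A_{k-1}\cup\{z^k_m:m<\tau_k\}$.
\end{itemize}
This is consistent because the vertices $z^k_m$ for $m<\tau_k$ are pairwise distinct (orientation) and none of them lies in $A_{k-1}$. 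On vertices never assigned, set $\sigma^*$ equal to a fixed element of $I$. Each step involves only countably many measurable operations on the payoff field, so $\sigma^*$ is $\mathcal F_{\mathrm{all}}\otimes\mathcal P(\Z^d)$-measurable.

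\textbf{Step 3 (optimality of the stationary play).} Fix $\omega\in\Omega_0$ and a start $z=z^k$.
\begin{itemize}[noitemsep,topsep=0pt]
\item[] The stationary play from $z^k$ follows $(z^k_m)$ until stage $\tau_k$.
\item[] If $\tau_k<\infty$, the token then sits at a vertex $w=z^j_s$ belonging to an earlier play $j<k$ with $s<\tau_j$. From $w$ the stationary rule reproduces the suffix of play $j$, possibly until play $j$ itself merges into some $j'<j$.
\item[] Indices strictly decrease at each merge, so after finitely many merges the stationary play coincides, up to a finite prefix and a finite time shift, with the tail of some play $(z^\ell_m)$ that never merges, that is, with $\tau_\ell=\infty$.
\end{itemize}
Payoffs are bounded by $\|g\|_\infty$. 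A finite prefix and a finite time shift change $\gamma_n$ by at most $O(1/n)$. Hence $\gamma_n(z,\sigma^*)$ has the same limit as $\gamma_n(z^\ell,\sigma^{(\ell)})$, namely $v_\infty$. So $\sigma^*$ is a stationary $0$-optimal strategy from every initial state simultaneously, almost surely.

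The main obstacle I expect is Step 3: making precise that the merged play is exactly a suffix of an earlier optimal play, and that the time shift is harmless. The first point holds because at a vertex of play $j$ visited before $\tau_j$, $\sigma^*$ prescribes exactly $i^j_s$, so the successor is $z^j_{s+1}$, and induction continues up to $\tau_j$. The second point is the standard fact that Cesàro limits of bounded sequences are shift-invariant. A secondary point to check is that the limit is the same constant $v_\infty$ for all $\ell$. This holds because in the oriented case the value is deterministic and independent of the initial state.
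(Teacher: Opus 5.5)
Your proof is correct and follows essentially the same route as the paper. You enumerate starting points, give priority to the optimal trajectory of smallest index, use orientation to make the visit stage at each vertex unique, and note that the priority index can drop only finitely many times along the stationary play, which therefore eventually follows a single $0$-optimal trajectory. The differences are cosmetic: you assign vertices by a coalescing first-come rule that truncates each play at its merge time $\tau_k$, whereas the paper follows the smallest-index trajectory through the vertex; you also make explicit the shift-invariance of Cesàro limits and the measurability of $\sigma^*$, which the paper leaves implicit.
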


		We conclude this section by presenting several examples. They illustrate the model, in particular its connection to percolation theory, and offer a preview of the subsequent sections.

		\subsection{Examples}\label{subsecction:examples}

		Clearly, with $I = \{a\} \neq \{0\}$, the strong law of large numbers (SLLN) yields $v_{\infty} = \mathbb{E}[g]$.~The next example shows that adding a single action, even in the simplest one-dimensional setting, fundamentally changes the problem: the existence of the limit no longer follows from the SLLN, and characterizing 0-optimal strategies becomes nontrivial.~Subsection \ref{subsection:onthevertices} includes other one-dimensional, two-action examples, for which the proposed 0-optimal strategies are much more complex. 

		\begin{example}[Vertex-based payoffs]\label{example:dimension1-vertices}
			Consider a PMDP on $\Z$ with $I = \{+1, +2\}$ and $\mathrm{Bernoulli}(p)$ payoffs on the vertices with parameter $p \in [0,1]$, known in advance to the decision maker. It is oriented; any direction $u > 0$ is valid. In Proposition \ref{prop:gamma_1} we prove that a 0-optimal strategy plays $+1$ as long as the first vertex to the right of the current position has payoff 1, and plays $+2$ otherwise. 
		\end{example}

		\begin{example}[Controllable]\label{example:dimension2_controllable}
			Consider a PMDP on the square lattice with $I \supset \{(1,0), (-2,-1), (1,1)\}$ and $\mathrm{Bernoulli}(p)$ payoffs with $p \in (0, 1]$.~From every vertex of the transition graph there exists a cycle (playing $(1,1)$ then $(-2, -1)$ and then $(1,0)$), as illustrated below:
			
			\begin{center}
				\begin{tikzpicture}[
					scale=0.8,
					baseline=(current bounding box.center),
					every node/.style={transform shape}
				]
					% vertices
					\node[circle, draw, fill=black, inner sep=1.5pt,
						label=right:{$x+(1,1)$}] (RightUp) at (1.5,1.5) {};

					\node[circle, draw, fill=black, inner sep=1.5pt,
						label=below:$x$] (zero) at (0,0) {};

					\node[
						circle, draw, fill=black, inner sep=1.5pt,
						label={below:{$x-(1,0)$}}
					] (Left) at (-1.5,0) {};

					% moves
					\draw[->, thick, color=red, line width=0.8pt] (zero) -- (RightUp);
					\draw[->, thick, color=red, line width=0.8pt] (Left) -- (zero);
					\draw[->, thick, color=red, line width=0.8pt] (RightUp) to[bend right=20] (Left);
				\end{tikzpicture}
			\end{center} 
			
			Thus, a 0-optimal strategy is to look for a reachable cycle with corresponding payoffs equal to 1.~In the proof of Theorem \ref{theorem:1-player}, we will see how this idea extends to all controllable PMDPs.
		\end{example}

		A particularly interesting example is the following, which establishes a link with a classical percolation problem. 

		\begin{example}[First look at the link with oriented percolation] \label{example:oriented-percolation-on-Z2}
			Consider a PMDP on the square lattice with action set $I = \{e_1, e_2\}$ and $\mathrm{Bernoulli}(p)$ payoffs. Let the token start at the origin.~This example is oriented and expands in all directions within $\mathbb{R}^2_{+}$.~If an infinite oriented open path exists in the reachable quadrant (i.e., a sequence $(x_k)_{k \in \N}$ such that $x_{k+1} - x_k \in \{e_1, e_2\}$, $g(x_k, x_{k+1}-x_k) = 1$ and $x_k \in \N^2$ for all $k \in \N$), the decision maker can reach it in finitely many steps and then follow it forever. Since the number of steps required to reach it does not affect the asymptotic average payoff, this yields a limit value of 1. Thus, determining whether the limit value is 1 becomes a survival or percolation problem.
			
			It is therefore natural to consider oriented Bernoulli percolation on $\Z_+ \times \Z_+$ with directed edges running from $x$ to $x+e_1$ and $x+e_2$, and where each edge is declared open (payoff 1) with probability $p$ and closed (payoff 0) with probability $1-p$, independently of all other edges \cite{Durrett1984Oriented, Grimmett2002}.~The critical probability threshold is defined as
			\[
				\vec{p}_{bc} := \inf \bigl\{p \in [0,1] : \Pbb_p(\text{there exists an infinite oriented open path}) = 1 \bigr\}.
			\]
			It is known to be non-trivial, i.e., $\vec{p}_{bc} \in (0,1)$ \cite[Sections 3 and 6]{Durrett1984Oriented}.
			
			Consequently, for $p > \vec{p}_{bc}$, the limit value is 1 with probability 1.~Moreover, it can be proven that the limit value is 1 almost surely if and only if $p \geq \vec p_{bc}$. This result and example are discussed in subsection \ref{subsection:critical_probability_threshold}.
		\end{example}

		The previous example belongs to the following class of PMDPs, which is related to another classic problem in percolation and probability theory. 

		\begin{example}[First look at the relation with LPP models]\label{example:lpp}
			Consider a class of oriented PMDPs with action set $I \subset \Z^d$ satisfying the \emph{constant path length property}, meaning that, for any two given vectors $x,y \in \Z^d$, any sequence of actions $a_1,a_2,\dots,a_k$ that allows one to go from $x$ to $y$ always has the same length $k$.

			This class of PMDPs is closely related to \emph{directed last-passage percolation} (LPP) (see \cite{randomgrowthmodels2016} for a concise introduction and \cite[Section~7.2.3]{auffinger2017fifty} for a survey).~LPP concerns the \emph{maximum accumulated payoff} along directed paths in a random medium.~While the standard step/action set is the canonical basis, a more general description of the model can be found in \cite[Section 2]{GeorgiouFirasSeppalainen2016VariationalFormulas} where the constant path length property is assumed.~There are also different formulations of LPP depending on the path \cite[Section~2]{GeorgiouFirasSeppalainen2016VariationalFormulas}.~Oriented PMDPs naturally correspond to a specific formulation in which directed paths originate from a fixed starting point and maximize their accumulated weight among all paths whose endpoint lies on a target line.

			All of this will be formalized in Section \ref{section:lpp}. For now, we simply wish to highlight two things. First, this class of PMDPs aligns with an existing formulation of LPP, and thus the existence of their limit value can be deduced from the law of large numbers for the corresponding LPP model. Second, oriented PMDPs can be seen as a generalization of LPP, allowing for more general step sets, such as the one in the next example.
		\end{example}

		\begin{example}[Positively linearly dependent action set]\label{example:extended_moves}
			Consider a token initially placed at the origin $z_0 = 0$ in $\mathbb{Z}^2$. At each stage, the decision maker may move the token one unit upward (action $e_2$), one unit to the right (action $e_1$), or two units to the right (action $2e_1$).~Payoffs are $\mathrm{Bernoulli}(p)$. From each vertex $z$, the transition graph contains the following edges:
			
			\begin{center}
				\begin{tikzpicture}[scale=0.8, baseline=(current bounding box.center), every node/.style={transform shape}]
					% vertices
					\node[circle, draw, fill=black, inner sep=1.5pt, label=left:$z+e_2$] (up) at (0,1.5) {};
					\node[circle, draw, fill=black, inner sep=1.5pt, label=below:$z$] (zero) at (0,0) {};
					\node[circle, draw, fill=black, inner sep=1.5pt, label=below:$z+e_1$] (right1) at (1.5,0) {};
					\node[circle, draw, fill=black, inner sep=1.5pt, label=below:$z+2e_1$] (right2) at (3,0) {};

					% moves
					\draw[->, thick, line width=0.8pt] (zero) -- (right1);
					\draw[->, thick, line width=0.8pt] (zero) -- (up);
					\draw[->, thick, line width=0.8pt] (zero) to[bend left=20] (right2);
				\end{tikzpicture}
			\end{center} 
			
			Each edge is independently assigned a payoff of 1 with probability $p$ and 0 otherwise.~This example is briefly discussed alongside Figure \ref{figure:simulations2} in subsection \ref{subsection:simulations}.
		\end{example}

	\section{Proof of the Main Results}\label{section:proof_main_results}

		\subsection{Existence of a limit value}

		The oriented case follows from Theorem~\ref{theorem:iid_and_oriented_percolation_games}.~For controllable PMDPs, the key idea is to identify a reachable cycle whose edge-payoffs are arbitrarily close to their  essential supremum.

		\begin{proof} [of Theorem \ref{theorem:1-player}]
			We prove it in each of the three possible cases.
			
			\textit{Case 0: $I = \{0\}$.}~Given an initial state $z$, $z_m = z$ for all $m \geq 1$.~Consequently, $v_{\infty}(z) = g(z, 0)$. 
			
			\textit{Case 1: oriented.}~Follows from Theorem \ref{theorem:iid_and_oriented_percolation_games}.
			
			\textit{Case 2: controllable.} Fix $\varepsilon > 0$, and let $S_{\varepsilon}$ denote the class of cycles whose edge-payoffs are $\varepsilon$-close to their essential supremum, that is, for every $C \in S_{\varepsilon}$, it holds that 
			\begin{equation}\label{eq:desired_property}\tag{$M_{\varepsilon}$}
				\text{for all } z \in C, \;
				g(z, a_{i(z)}) \geq M - \varepsilon,
			\end{equation}
			where $a_{i(z)}$ is the action played at $z$ to keep the token within $C$.

			Let $z \in \Z^d$ be the initial position. Define the $\varepsilon$-strategy $\tilde \sigma$ as follows: identify a state contained in a cycle $C$ in $S_{\varepsilon}$ and reachable from $z$. Move there, and once there, choose actions such that the state remains within that cycle.~Let $A_{\varepsilon}$ be the event that such a state exists.~We show that it happens with probability 1.~Since the graph is translation-invariant (see Remark \ref{remark:translation-invariant}) and the PMDP is controllable, we can choose countably many pairwise disjoint translates of a fixed cycle that contain reachable points.~Enumerate them.~For each $m \geq 1$, let 
			$A_{\varepsilon, m} := \{m\text{-th cycle satisfies } \eqref{eq:desired_property}\}$. Using that the cycles are disjoint, that the payoffs are i.i.d. and the definition of the essential supremum, we obtain
			$$
				\Pbb(A_{\varepsilon}^c) \leq \Pbb\left(\bigcap_{m = 1}^{\infty}A_{\varepsilon, m}^c\right) = \prod_{m = 1}^{\infty}\Pbb(A_{\varepsilon, m}^c) = \lim_{m \to \infty}\left(1-\Pbb(g \geq M - \varepsilon)^{|C|}\right)^m = 0.
			$$
			% And by definition of the essential supremum it follows that $\Pbb(A_{\varepsilon}) = 1$.
			Thus, for every initial state $z$, almost surely, there exists $s \geq 0$ such that a cycle in $S_{\varepsilon}$ can be reached in $s$ steps. Consequently, since payoffs are uniformly bounded,
			\[
				v_n \geq \frac{s\operatorname*{ess\,inf}(g)}{n} + \frac{n - s}{n}(M - \varepsilon) \geq M - \varepsilon + o(1), \text{ for all } n \ge s, \; \Pbb\text{-a.s.}
			\]
			This implies that the decision maker can ensure an average payoff arbitrarily close to $M$ for large horizons. Combining this with the trivial bound $v_n \leq M$ yields that $v_n(z) \to M$, as $n \to \infty$, almost surely.

		\end{proof}

		\begin{remark}[On controllable PMDPs]
			By using the proposed strategy, $M$ itself may never be attained along any actual trajectory; that is, the strategy is not necessarily 0-optimal. For instance, if payoffs are $\mathrm{Unif}(0,1)$, then $g(z,i) < 1$ almost surely for all $(z,i)$, even though $\operatorname*{ess\,sup} g = 1$.~The strategy is merely $\varepsilon$-optimal.~In general, with such a strategy, $M$ is attained along a trajectory if and only if $\mathbb{P}(g = M) > 0$.~Furthermore, the convergence $v_n(z) \to M$ as $n \to \infty$ fails to be uniform with respect to $z$, unless $g = M$ almost surely.
			% To see this, assume without loss of generality that $M = 1$ and that there exists $\varepsilon > 0$ such that $p_\varepsilon := \mathbb{P}(g \leq 1 - \varepsilon) > 0$. Let $C_n$ denote the maximal graph distance reachable from any state within $n$ stages. For each $z \in \mathbb{Z}^d$, define the event
			% \[
			% 	E_z(n, \varepsilon) := \left\{ \max_{(z',a) \in A_{n,z}} g(z', a) \leq 1 - \varepsilon \right\}, \quad \text{where } A_{n,z} := \{(z',a) : |z' - z| \leq C_n,\ a \in I\}.
			% \]
			% Since the set $A_{n, z}$ contains at most $K_n := |I|(2C_n + 1)^d$ elements, independence yields $\mathbb{P}(E_z(n, \varepsilon)) \geq p_\varepsilon^{K_n} > 0$. By choosing a sequence of initial states on a sublattice with spacing strictly greater than $2C_n$, the corresponding events $\{E_z(n, \varepsilon)\}_{z}$ are independent. By the second Borel-Cantelli lemma, infinitely many of these events occur almost surely. Consequently, almost surely, for every $n \in \mathbb{N}$, there exists a state $z$ such that $v_n(z) \leq 1 - \varepsilon$, which implies $\sup_{z \in \mathbb{Z}^d} |v_n(z) - 1| \geq \varepsilon$, and thus, uniform convergence is precluded.
		\end{remark}

		\subsection{Existence of a 0-optimal strategy}
		The proof of Theorem \ref{theorem:main-uniform} is mainly devoted to constructing a block-based strategy $\tilde \sigma$ that guarantees the limit value in the infinite-horizon average payoff with limit inferior criterion, that is,
			\begin{equation}\label{eq:strategy-blocks}
				\liminf_{n \to \infty} \gamma_n(z, \tilde \sigma) \geq v_{\infty}(z).
			\end{equation}
			Once this is done, Theorem \ref{theorem:main-uniform} follows immediately: see \eqref{eq:conclusion}.

		The construction of the block-based strategy is different for oriented and controllable PMDPs.

		For the oriented case, it consists of playing a $k$-stage optimal strategy at block $k$.~Then, achieving the limit value relies on an asymptotic equicontinuity property of $(v_n(z))$ that we prove next as a lemma.~It asserts that convergence to the limit value is uniform over the growing set of states reachable within a sequence of blocks, allowing the starting position to drift arbitrarily far as the block index increases.

		For the controllable case, it dynamically approaches the essential supremum of the payoffs.~Each block $k$ consists of a searching phase, which attempts to reach a cycle whose edge-payoffs are within $\varepsilon_k$ of the essential supremum, and a staying phase, during which the token remains in the cycle.~By choosing $\varepsilon_k \to 0$ as $k \to \infty$, and an adaptive staying duration that grows sufficiently fast relative to the search times, we ensure that the asymptotic contribution of the suboptimal search phases to the overall payoff diminishes.

		\begin{definition}[Reachable set]\label{def:reachable-set}
			For $z \in \mathbb{Z}^d$ and $n \ge 1$, the set of reachable states from $z$ in $n$ stages is
			\[
				\mathcal{R}_n(z) := z + nI = \left\{ z + \sum_{k=1}^n i_k : i_k \in I \right\}, 
			\]
			and we simply write $\mathcal{R}_n$ when $z = 0$.
		\end{definition}

		\begin{lemma}\label{lemma:equicontinuity}
			For every oriented PMDP, every initial state $z \in \mathbb{Z}^d$, and every $k \in \N$, it holds that
			\begin{equation}\label{eq:casi-uniform}\tag{A}
				\lim_{n\to\infty} \sup_{z' \in \mathcal{R}_{n^k}(z)} |v_n(z') - v_{\infty}| = 0, \; \Pbb\text{-a.s.}
			\end{equation}
		\end{lemma}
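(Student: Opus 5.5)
The plan is to combine the quantitative concentration bound of Theorem~\ref{theorem:iid_and_oriented_percolation_games} with a union bound over the polynomially many states in $\mathcal{R}_{n^k}(z)$, and then apply the Borel--Cantelli lemma.

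The first step is to bound the size of the reachable set. Since $I$ is finite, every point of $\mathcal{R}_{n^k}(z)$ has the form $z+\sum_{a\in I} c_a a$ with nonnegative integers $c_a$ summing to $n^k$. Hence
\[
|\mathcal{R}_{n^k}(z)| \le (n^k+1)^{|I|}.
\]
Alternatively, $\mathcal{R}_{n^k}(z)$ lies in a box of side $2n^k\|I\|_\infty+1$, which gives $(2n^k\|I\|_\infty+1)^d$. Either way, the cardinality is polynomial in $n$.

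The second step fixes $\lambda_n := n^{-1/4}$, so that $\lambda_n\to 0$ while $B\lambda_n^2 n = B n^{1/2}\to\infty$. By Theorem~\ref{theorem:iid_and_oriented_percolation_games}, the bound holds uniformly in the initial state $z'$ with the same constants $A,B$. The union bound then gives
\[
\Pbb\Bigl(\sup_{z'\in\mathcal{R}_{n^k}(z)} |v_n(z')-v_\infty| \ge \lambda_n + A\ln(n+1)n^{-1/2}\Bigr) \le (n^k+1)^{|I|}\exp(-B n^{1/2}).
\]
This matters because $\mathcal{R}_{n^k}(z)$ is a deterministic set, depending only on $I$, $z$ and $n$, so no measurability or randomness of the supremum index arises.

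The third step observes that the right-hand side is summable in $n$, since stretched-exponential decay beats any polynomial. Borel--Cantelli then shows that almost surely, for all $n$ large enough,
\[
\sup_{z'\in\mathcal{R}_{n^k}(z)}|v_n(z')-v_\infty| < n^{-1/4} + A\ln(n+1)n^{-1/2},
\]
and the right-hand side tends to $0$. This is exactly \eqref{eq:casi-uniform}.

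No step is really hard. The only point needing care is to make sure the constants $A,B$ in Theorem~\ref{theorem:iid_and_oriented_percolation_games} do not depend on the starting state, which the statement guarantees, and that the limit $v_\infty$ is the same deterministic constant for every $z'$. With that in hand, the choice of $\lambda_n$ balancing $\lambda_n\to0$ against $\lambda_n^2 n\gg \log n$ is the crux. Any $\lambda_n = n^{-\alpha}$ with $\alpha<1/2$ works, and even $\lambda_n=\sqrt{C\log n/n}$ with $C$ large enough relative to $k|I|/B$ would suffice.
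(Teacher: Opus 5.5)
Your proposal is correct and follows essentially the same route as the paper: a polynomial bound on $|\mathcal{R}_{n^k}(z)|$, the concentration inequality of Theorem~\ref{theorem:iid_and_oriented_percolation_games} combined with a union bound, and then Borel--Cantelli. The only difference is cosmetic. The paper fixes $\varepsilon>0$ and absorbs the $A\ln(n+1)n^{-1/2}$ term for large $n$, whereas you take $\lambda_n=n^{-1/4}$, which spares you the final intersection over countably many $\varepsilon$'s and gives an explicit rate.
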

		\begin{proof} Fix $\varepsilon > 0$. 
			% By the triangle inequality, we have
			% \[
			% 	\sup_{z' \in R_z(n^k)} |v_n(z') - v_n(z)| \leq \sup_{z' \in R_z(n^k)} |v_n(z') - w| + |v_n(z) - w|.
			% \]
			% Since $v_n(z) \to w$ $\Pbb$-a.s. according to Theorem \ref{theorem:iid_and_oriented_percolation_games}, it suffices to show that
			% \[
				% 	\lim_{n \to \infty} \sup_{z' \in R_z(n^k)} |v_n(z') - w| = 0, \quad \Pbb\text{-a.s.}
			% \]
			The reachable set $\mathcal{R}_{n^k}(z)$ is contained in the intersection of $\mathbb{Z}^d$ and a ball of radius $n^k \max_{i \in I} \|i\|$ with $I$ finite. Thus, its cardinality grows at most polynomially. On the other hand, by the concentration estimates from Theorem~\ref{theorem:iid_and_oriented_percolation_games}, there exists $n_0 \geq 0$ such that for all $n \geq n_0$
			\[
				\Pbb(|v_n(z) - v_{\infty}| \geq \varepsilon) \leq \exp\left(-\frac{B\varepsilon^2n}{4}\right).
			\]
			Consequently, the almost sure convergence follows by the Borel-Cantelli lemma.
			
		\end{proof}

		\begin{proof} [of Theorem \ref{theorem:main-uniform}] \textbf{Proving \eqref{eq:strategy-blocks} for trivial and oriented PMDPs:} 
			If $I = \{0\}$, then $\mathcal{R}_n(z) = \{z\}$ for all $n$, and the result holds trivially. Assume henceforth that $I \neq \{0\}$. For each $z \in \Z^d$, let $\Omega_z$ be the full-measure set on which $v_k(z)$ converges to $v_{\infty}$ almost surely as $k \to \infty$ (guaranteed by Theorem~\ref{theorem:iid_and_oriented_percolation_games}), and let $\tilde\Omega$ be the full-measure set on which \eqref{eq:casi-uniform} holds. Since $\Z^d$ is countable, the intersection
			\[
				\Omega^* := \bigcap_{z\in\Z^d}\Omega_z\cap\tilde\Omega
			\]
			also has probability one. Fix $\omega \in \Omega^*$ and $z\in\Z^d$. Consider the block strategy $\tilde{\sigma}$ defined as follows: play an optimal $k$-stage strategy $\sigma^{(k)}$ during block $k$, for $k=1,2,\dots$. Let $T_n = \sum_{k=1}^n k = n(n+1)/2$. Then, 
			\[
				\gamma_{T_n}^{\omega}(z, \tilde{\sigma}) = \frac{1}{T_n}\sum_{k=1}^n\sum_{m=0}^{k-1} g_\omega\bigl(z_{k,m},\sigma^{(k)}(z_{k,m})\bigr)
				=
				\frac{1}{T_n}\sum_{k=1}^n k\, v_k^\omega(z_{k-1,0}),
			\]
			where $z_{k-1,0}$ is the state at which the token enters the $k$-th block and $z_{0,0}=z$. It holds that $z_{k-1,0} \in R_{(k-1)k/2}$ and Lemma \ref{lemma:equicontinuity} thus applies, yielding $v_k^\omega(z_{k-1,0}) \to v_{\infty}$ as $k \to \infty$. By the weighted Cesàro mean theorem, since $v_{k}^{\omega}(z_{k-1,0}) \to v_{\infty}$, and the weights satisfy $T_n = \sum_{k = 1}^nk \to \infty$, we have
			\begin{equation}\label{eq:Tnconvergence}
				\lim_{n\to\infty}\gamma_{T_n}^{\omega}(z, \tilde{\sigma}) = \lim_{n \to \infty}\frac{1}{T_n}\sum_{k=1}^n k\, v_k^\omega(z_{k-1,0})=v_{\infty}.
			\end{equation}
			To extend this result to arbitrary horizons $N \in \mathbb{N}$, choose $n$ such that $T_n \le N < T_{n+1}$ and let $m := \operatorname*{ess\,inf}(g)$. We have
			\[
				N\gamma_N^{\omega}(z, \tilde{\sigma}) \ge T_n\gamma_{T_n}^{\omega}(z, \tilde{\sigma}) - |m|(N - T_n) \ge T_n\gamma_{T_n}^{\omega}(z, \tilde{\sigma}) - |m|(n+1).
			\]
			Dividing by $N$ and using that $T_n < N < T_{n+1}$, we obtain
			\[
				\gamma_N^{\omega}(z, \tilde{\sigma}) \ge \frac{T_n}{T_{n+1}}\gamma_{T_n}^{\omega}(z, \tilde{\sigma}) - \frac{|m|(n+1)}{T_n}.
			\]
			As $N \to \infty$, we have $n \to \infty$, and thus $T_n/T_{n+1} \to 1$, and $(n+1)/T_n \to 0$. From this and \eqref{eq:Tnconvergence} it follows
			\begin{equation}\label{eq:block-oriented}
				\liminf_{N\to\infty} \gamma_{N}^{\omega}(z, \tilde \sigma) \ge v_{\infty}.
			\end{equation}

			\textbf{Proving \eqref{eq:strategy-blocks} for controllable PMDPs.} We fix $\omega \in \Omega$ and omit it for simplicity. We also assume $M = 1$ and $m := \operatorname*{ess\,inf}(g) \geq -1$, after standard normalization if necessary.~We define the strategy $\tilde \sigma$ as follows. Let $\varepsilon_k := 2^{-k}$ for $k \geq 1$.~At each block $k$, the strategy proceeds in two phases:
			\begin{itemize}[noitemsep,topsep=0pt]
				\item[1.] \textit{Searching:} play actions to reach a cycle $C_{\varepsilon_k}$ satisfying \eqref{eq:desired_property} with $\varepsilon = \varepsilon_k$.
				\item[2.] \textit{Staying:} play the actions that keep the token within that cycle $C_{\varepsilon_k}$ for a duration $L_k$.
			\end{itemize}

			Let $\tau_k$ be the number of stages required to reach such a cycle, and let $T_{k-1}$ denote the time elapsed until the end of block $k-1$ (with $T_0=0$). The total time after block $k$ is $T_k = T_{k-1} + \tau_k + L_k$. To ensure that the time spent searching becomes asymptotically negligible, we choose the staying durations adaptively:
			\begin{equation}\label{eq:staying-duration-def}
				L_k := 2^k (T_{k-1} + \tau_k).
			\end{equation}
			This choice ensures that the average payoff at the end of each block satisfies a decaying lower bound, converging to 1. Specifically, we claim by induction that for all $k \geq 1$,
			\begin{equation}\label{eq:induction-claim}
				\gamma_{T_k}(z, \tilde \sigma) \geq 1 - 2^{-(k-2)}.
			\end{equation}
			We have $T_1 = \tau_1 + L_1 = 3\tau_1$. During the staying phase, the per-stage payoff is at least $1 - \varepsilon_1 = 1/2$, while during searching it is bounded below by $-1$. Thus, the base case holds trivially:
			\[
				T_1 \gamma_{T_1} \geq 2\tau_1 \cdot \frac{1}{2} - \tau_1 = 0.
			\]
			Now, for $k \geq 2$, assume $\gamma_{T_{k-1}}(z, \tilde \sigma) \geq 1 - 2^{-(k-3)}$. Then, 
			\begin{equation}\label{eq:bound-tk}
				T_k \gamma_{T_k}(z, \tilde \sigma) \geq T_{k-1}\bigl(1 - 2^{-(k-3)}\bigr) - \tau_k + L_k\bigl(1 - 2^{-k}\bigr)
			\end{equation} 
			Using $L_k = 2^k(T_{k-1} + \tau_k)$ and $T_k = (2^k+1)(T_{k-1}+\tau_k)$, the right hand side in \eqref{eq:bound-tk} transforms into
			\[
               T_{k-1} - T_{k-1}2^{-(k-3)} - \tau_k + (2^k - 1)(T_{k-1} + \tau_{k}) \ge T_k - \bigl[T_{k-1}2^{-(k-3)} + 3(T_{k-1}+\tau_k)\bigr] + L_k\bigl(1 - 2^{-k}\bigr).
			\]
			Using again the expression for $T_k$, we have the following bound for the term inside the brackets
			\[
				T_{k-1}2^{-(k-3)} + 3(T_{k-1}+\tau_k) \le \bigl(4 + 2^{-(k-2)}\bigr)(T_{k-1}+\tau_k) = T_k2^{-(k-2)}.
			\]
			Substituting all this in \eqref{eq:bound-tk} gives us \eqref{eq:induction-claim}, thus completing the induction step.

			Since $T_k \to \infty$ as $k \to \infty$, $2^{-(k-2)} \to 0$ and payoffs are bounded by 1, we conclude that
			\[
				\liminf_{k \to \infty} \gamma_{T_k}(z, \tilde \sigma) = 1.
			\]
			To conclude that $\liminf_{N \to \infty} \gamma_N(z, \tilde \sigma) \ge 1$, we must show that the average payoff does not drop significantly during the searching phases between block endpoints. Fix an arbitrary horizon $N$ and let $k$ be such that $T_k \le N < T_{k+1}$.~Write $N = T_k + s$ with $0 \le s < \tau_{k+1} + L_{k+1}$. 

			If $0 \le s \leq \tau_{k+1}$ (searching phase), since the per-stage payoff is at least $-1$, we have
			\[
				\gamma_N \geq \frac{T_k(1 - 2^{-(k-2)}) - s}{T_k + s}.
			\]
			This expression is decreasing in $s$, hence its minimum over the searching phase occurs at $s = \tau_{k+1}$. If $s > \tau_{k+1}$ (staying phase), the per-stage payoff is at least $1 - 2^{-(k+1)}$. Consequently, 
			\[
				\gamma_N \geq \frac{T_k(1 - 2^{-(k-2)}) - \tau_{k+1} + (s - \tau_{k+1})(1 - 2^{-(k+1)})}{T_k + \tau_{k+1} + (s - \tau_{k+1})} \geq 1 - 2^{-(k-2)} - \frac{2\tau_{k+1}}{L_k} + o(1) 
			\]
			By resetting $L_k = 2^k(T_{k-1} + \tau_k + \tau_{k+1})$ (note the extra $+\tau_{k+1}$ with respect to \eqref{eq:staying-duration-def}), the second summand converges to 0 as $k \to \infty$. The inductive argument from before works as well for staying durations greater than or equal to $2^k(T_{k-1} + \tau_k)$, and therefore also applies to this new choice.
            
            Since every horizon $N$ falls into some block, these observations and the general bound $\gamma_n \leq 1$ allow us to conclude
			\begin{equation}\label{eq:block-controllable}
				\liminf_{N \to \infty} \gamma_N(z, \tilde \sigma) \geq 1.
			\end{equation}
			This, together with \eqref{eq:block-oriented}, concludes the proof of \eqref{eq:strategy-blocks}.

			\textbf{Conclusion:} From \eqref{eq:block-oriented} and \eqref{eq:block-controllable}, the definition of $v_n$ and Theorem \ref{theorem:1-player}, for all $z \in \Z^d$ we have almost surely
			\begin{equation}\label{eq:conclusion}
				v_{\infty}(z) \leq \liminf_{n \to \infty}\gamma_n(z, \tilde \sigma) \leq \limsup_{n \to \infty}\gamma_n(z, \tilde \sigma) \leq \limsup_{n \to \infty} v_n(z) = v_{\infty}(z),
			\end{equation}
            Consequently, $\tilde \sigma$ is optimal.~By definition of limit superior and inferior, we obtain that the uniform value exists.~In particular, by Theorem \ref{theorem:1-player}, the uniform value is deterministic and independent of the initial state for oriented and controllable PMDPs.

		\end{proof}

        \begin{remark}[On the extension for (two-player) percolation games]
            The proofs of Lemma \ref{lemma:equicontinuity} and of Theorem \ref{theorem:main-uniform} in the oriented case are naturally extended to the zero-sum two-player framework: not only does $v_n$ converge a.s. (as proven in \cite{GarnierZiliotto2022}) but there is a uniform value and 0-optimal strategies.~A full proof would require a precise reminder of the two player model so we omit it here.
        \end{remark}
    
		\subsection{For oriented PMDPs there is a 0-optimal stationary strategy}

			We proved that for every initial state $z$ there exists a (possibly time-dependent) optimal strategy $\sigma = (\sigma_{m})_{m \geq 1}$ in $\Gamma(z)$.~We now construct a stationary optimal strategy $\sigma^*$ from this family. 

		\begin{proof} [of Theorem \ref{theorem:oriented-stationary}]
			Fix $\omega \in \Omega$. For any starting state $z \in \Z^d$, and stage number $m \geq 1$, let $f_{z, m} \colon \Omega \to \mathbb{Z}^d$ be the state at stage $m$ when using strategy $\sigma$ in $\Gamma(z)$. That is, following $\sigma$ from $z$ generates a trajectory $f_{z, 1}(\omega) = z, f_{z, 2}, f_{z, 3}, \ldots$. 
			
			Fix a bijection $\phi:\mathbb{N}\to\mathbb{Z}^d$ (for better intuition, one can order $\Z^d$ by the Euclidean distance from the origin, breaking ties lexicographically) and define $r_z: \Omega \to \mathbb{N}$ by
			\[
				r_z(\omega) := \min\{n\in\mathbb{N} \colon \text{ there exists } m \geq 1, \text{ such that } z = f_{\phi(n), m}(\omega)\}.
			\]
			Thus, $\phi(r_z(\omega))$ is the smallest (first in the enumeration $\phi$) $z'$ such that $z$ appears on the optimal trajectory starting from $z'$.~Let also $m_z(\omega)$ be the unique stage such that $z = f_{\phi(r_z(\omega)),m_z(\omega)}$; the uniqueness of $m_z(\omega)$ immediately follows from the fact that the PMDP is oriented.
			
			\begin{figure}[!ht]
				\centering
				\begin{tikzpicture}[scale=0.4]
					\coordinate (O) at (0,0);
					\fill (O) circle (2.5pt);
					\node[below=1pt] at (O) {\scriptsize $0$};

					% Dashed arc centered at 0
					\draw[dashed, thick] (-0.8, 1.5) arc (118:-20:1.7);

					% Point z1 on arc
					\coordinate (z1) at (1.3, 0.8);	
					\node[below=2pt] at (z1) {\scriptsize $z_1$};

					% Point z2
					\coordinate (z2) at (1.1, 4);
					\node[left=2pt] at (z2) {\scriptsize $z_2$};

					% Intersection point z
					\coordinate (z) at (4, 4.5);
					\node[below right] at (z) {\scriptsize $z$};

					% Point above z on blue path
					\coordinate (zp) at (5.1, 5.8);
					\node[right] at (zp) {\scriptsize $z + \sigma^*(\omega, z)$};
					\draw (zp) circle (3pt);

					\draw [blue!40!purple!80!black, xshift=4cm]
						(z1) to[out=50,in=240] (z);

					\draw [magenta!40, line width=4pt, opacity=0.5, xshift=4cm]
						(z) to[out=50,in=240] (zp);
					\draw [blue!40!purple!80!black, xshift=4cm]
						(z) to[out=50,in=240] (zp);

					\draw [blue!40!purple!80!black, xshift=4cm]
						(zp) to[out=50,in=240] (4.5, 8.4);

					\draw [black!70!green, xshift=4cm] (z2) to[out=50,in=240] (z);
					\draw [black!70!green, xshift=4cm] (z) to[out=50,in=240] (5, 4.5);
					\draw [black!70!green, xshift=4cm] (5, 4.5) to[out=50,in=240] (7, 6);

					\fill (z1) circle (4pt);
					\fill (z2) circle (4pt);
					\fill (zp) circle (4pt);
					\fill (z) circle (4pt);

				\end{tikzpicture}
				\caption{Two optimal strategies pass through the state $z$.~We assign them a priority order by their Euclidean distance to the origin.~The stationary strategy $\sigma^*$ in $z$ follows the 0-optimal trajectory with the smallest index in the priority order, shown in violet.}
			\end{figure}

			We now define $\sigma^*$ by $\sigma^*(\omega,z) := \sigma_{m_z(\omega)}(\omega, \phi(r_z(\omega)))$. Thus, $\sigma^*$ chooses, for any state $z$, among all optimal trajectories passing through $z$, the one that originates from the smallest (for the enumeration $\phi$) starting state $z'$, and the decision maker is asked to play as in this trajectory. 
			
			Now, since $z = f_{\phi(r_z(\omega)), m_z(\omega)}(\omega)$, one has 
			$$
				z + \sigma^*(\omega,z) = f_{\phi(r_z(\omega)), m_z(\omega) + 1}(\omega) \; \text{ and } \; r_{z + \sigma^*(\omega, z)}(\omega) \leq r_z(\omega).
			$$ 
			Thus, for any $\omega$ and starting state $z$, the index $r$ is non-increasing on the trajectory followed by $\sigma^*$ starting from $z$. In particular, it is equal to some $a(\omega, z)$ from some stage on (it cannot decrease forever). Once $r$ stabilizes, the strategy $\sigma^*$ stops switching trajectories and permanently follows the single trajectory originating from $\phi(a(\omega, z))$, which is optimal by assumption.

		\end{proof}

		\begin{remark}
			For controllable PMDPs, the strategy constructed in the proof of Theorem \ref{theorem:1-player} is stationary but merely $\varepsilon$-optimal, while the one constructed in the proof of Theorem \ref{theorem:main-uniform} is optimal but not stationary.~In fact, a stationary optimal strategy is not guaranteed to exist.~For instance, consider a one-dimensional PMDP with $I=\{0,1\}$ and payoffs drawn from a continuous distribution, e.g. $\mathrm{Unif}(0,1)$.~A stationary strategy will either always move right by playing $+1$, or eventually choose $0$ at some state $z$ and remain there forever. The former scenario gives a limit value of 1/2 by the SLLN, while the latter yields a payoff of $g(z) < 1, \ \Pbb$-a.s.
		\end{remark}

	\section{The case of Bernoulli payoffs}\label{section:bernoulli}
		We begin this section with a continuity result for all oriented PMDPs with Bernoulli payoffs and a discontinuity result when there is controllability.~We then present examples of one-dimensional, oriented PMDPs with two actions, for which the uniform value and a 0-optimal strategy can be computed explicitly: 3 of them with payoffs on the vertices and one on the edges.

		\subsection{Continuity of the uniform value with respect to the Bernoulli parameter}\label{subsection:continuity_p}
		\begin{proposition}\label{prop:bernoulli_continuity}
			Consider a non-trivial PMDP with $\text{Bernoulli}(p)$ payoffs on $\Z^d$.~Then, the limit value function $p \mapsto v_{\infty}(p)$ is Lipschitz on $[p_0, 1]$ for every $p_0 > 0$. Moreover, if it is oriented, then the value function is also continuous at $p = 0$.
		\end{proposition}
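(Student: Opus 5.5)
The plan is to treat the controllable and oriented cases separately, since Theorem~\ref{theorem:1-player} already pins down the controllable case.

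\textbf{Controllable case.} For a controllable PMDP, Theorem~\ref{theorem:1-player} gives $v_\infty(p)=\operatorname*{ess\,sup} g$. This equals $1$ for every $p\in(0,1]$. Hence $v_\infty$ is constant on $[p_0,1]$, and Lipschitz there trivially. (At $p=0$ the value drops to $0$, which is exactly why continuity at $0$ is only claimed in the oriented case.)

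\textbf{Oriented case, Lipschitz on $[p_0,1]$.} Here I would use a thinning coupling. Fix $p_0\le p<q\le 1$. Realize the $\mathrm{Bernoulli}(p)$ field as $g^p(z,i)=g^q(z,i)\,\xi(z,i)$, where:
\begin{itemize}[noitemsep,topsep=0pt]
\item $g^q$ is an i.i.d.\ $\mathrm{Bernoulli}(q)$ field;
\item $\xi$ is an independent i.i.d.\ $\mathrm{Bernoulli}(p/q)$ field.
\end{itemize}
This coupling has two consequences.

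First, $g^p\le g^q$ pointwise, so $v_n^p(z)\le v_n^q(z)$ and $v_\infty$ is nondecreasing in $p$.

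Second, let $\pi$ be an $n$-stage optimal path for $g^q$. Its choice depends only on $g^q$. Because the PMDP is oriented, $\pi$ uses $n$ distinct edges. Conditionally on $g^q$, the $g^p$-payoff of $\pi$ is therefore $\mathrm{Binomial}(n v_n^q(z),p/q)$. Taking expectations gives
\[
\E[v_n^p(z)]\ \ge\ \tfrac{p}{q}\,\E[v_n^q(z)].
\]
By Theorem~\ref{theorem:iid_and_oriented_percolation_games}, $\E v_n\to v_\infty$ for both parameters. Hence $v_\infty(p)\ge \frac pq v_\infty(q)$, and so
\[
0\le v_\infty(q)-v_\infty(p)\le \Bigl(1-\tfrac pq\Bigr)v_\infty(q)\le \frac{q-p}{q}\le \frac{q-p}{p_0}.
\]
This gives Lipschitz constant $1/p_0$. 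The one delicate point is measurability and independence of the optimal path from $\xi$. I would handle it by fixing a deterministic tie-breaking rule among the finitely many $n$-step paths.

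\textbf{Oriented case, continuity at $p=0$.} Clearly $v_\infty(0)=0$, so it suffices to show $v_\infty(p)\to 0$ as $p\to 0$. Fix $\varepsilon>0$ and use a union bound over the at most $|I|^n$ paths of length $n$. Each such path has $n$ distinct edges (orientation again), so its total payoff is $\mathrm{Binomial}(n,p)$. The Chernoff bound gives
\[
\Pbb\bigl(\mathrm{Bin}(n,p)\ge \varepsilon n\bigr)\le (ep/\varepsilon)^{\varepsilon n}.
\]
Therefore
\[
\Pbb\bigl(v_n(z)\ge\varepsilon\bigr)\le \bigl(|I|\,(ep/\varepsilon)^{\varepsilon}\bigr)^n,
\]
which is summable once $p<\frac{\varepsilon}{e}\,|I|^{-1/\varepsilon}$. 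By Borel--Cantelli, $\limsup_n v_n(z)\le\varepsilon$ almost surely, so $v_\infty(p)\le\varepsilon$ for all such small $p$.

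I expect the main obstacle to be the thinning step: justifying that the $q$-optimal path is independent of $\xi$, and that its edges are distinct so the binomial computation is valid. Orientation settles the second point; a measurable deterministic selection settles the first.
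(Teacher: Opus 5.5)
Your proof is correct and follows essentially the paper's route: a monotone coupling plus a $p/q$-thinning for the Lipschitz bound on $[p_0,1]$, and a union bound over the at most $|I|^n$ oriented paths with an exponential-moment (Chernoff) estimate for continuity at $p=0$. The differences are cosmetic:
\begin{itemize}
\item You state the thinning inequality correctly in expectation, whereas the paper writes it loosely as $v_n(p')\le \frac{p'}{p}v_n(p)$.
\item Your separate treatment of the controllable case is harmless but not needed. Linearity of expectation does not require distinct edges.
\item You conclude via Borel--Cantelli. The paper instead bounds $\E v_n$ by Jensen and invokes $|v_\infty-\E v_n|\le A\ln(n+1)n^{-1/2}$ from Theorem~\ref{theorem:iid_and_oriented_percolation_games}.
\end{itemize}
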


		\begin{proof}
			We first prove continuity on the interval $(0, 1]$. For $0 < p < p' \leq 1$, we claim that
			\begin{equation}\label{eq:continuity_inequalities}
				v_{\infty}(p) \leq v_{\infty}(p') \leq \frac{p'}{p}v_{\infty}(p).
			\end{equation}
            Together, these establish continuity on $(0, 1]$. 
			The first inequality follows from the monotonicity of $v_{\infty}(p)$, which holds since the family of measures $\Pbb_{p \in [0,1]}$ can be coupled in an increasing way \cite[Proposition 2.1]{DuminilCopin2018}. For the second inequality, observe that a $\text{Bernoulli}(p')$ variable thinned by a factor $p/p'$ (keep each success independently with probability $p/p'$, and otherwise turn it into 0) is distributed as $\text{Bernoulli}(p)$.~Applying this to a configuration $\omega'$ sampled from $\Pbb_{p'}$ gives a configuration $\omega$ with law $\Pbb_p$, yielding
			\[
				v_{n}(p') \leq \frac{p'}{p}v_{n}(p),
			\]
			and taking the limit $n \to \infty$ gives the claimed inequality \eqref{eq:continuity_inequalities}.

			Let $p_0 > 0$, and consider any $p, p' \in [p_0, 1]$. Rearranging the second inequality in \eqref{eq:continuity_inequalities}, and given that the payoffs are bounded by 1, gives
			\[
				0 \leq v_{\infty}(p') - v_{\infty}(p) \leq \left(\frac{p'}{p} - 1\right)v_{\infty}(p) \leq \frac{p' - p}{p_0},
			\]
			which concludes the proof of the first statement of the proposition.

			To establish continuity at $p = 0$ for the oriented case, based on the estimates in Theorem \ref{theorem:iid_and_oriented_percolation_games}, it suffices to show that $\E[v_{n}(p)]$ can be made arbitrarily small by choosing $p$ small and $n$ large.~Using Jensen's inequality, the definition of the $n$-stage decision process value and the positivity and monotonicity of the exponential function gives
			\[
				\exp\left(\lambda n \E[v_n(p)]\right) \leq \E\left[\exp\left(\lambda n v_n(p)\right)\right] \leq \sum_{\pi} \E\left[\exp\left(\lambda \sum_{m=1}^n c(e^{\pi}_m)\right)\right],
			\]
			where the sum runs over all paths $\pi$ of length $n$ starting at $z$, and $e_m^\pi$ is the edge taken at stage $m$. Since the payoffs are i.i.d. $\text{Bernoulli}(p)$ random variables, we obtain
			\begin{equation*}
				\E[v_n(p)] \leq \frac{1}{\lambda n}\ln \left( \sum_{\pi} \E\left[\exp\left(\lambda \sum_{m=1}^n c(e^{\pi}_m)\right)\right] \right) = \frac{1}{\lambda}\left(\ln |I| + \ln(1 - p + pe^{\lambda})\right).
			\end{equation*}
			Choosing $\lambda = -\ln p$ yields
			\[
				\E[v_n(p)] \leq \frac{C}{-\ln p},
			\]
			for some constant $C$ independent of $n$ and $p$.

		\end{proof}

		\begin{remark}[Discontinuity for controllable PMDPs]
			The dependence on the essential supremum could induce discontinuities in the limit value with respect to the payoff distribution.~For example, consider a controllable PMDP with $\mathrm{Bernoulli}(p)$ payoffs.~The limit value $p \mapsto v_\infty(p)$ is discontinuous at $p = 0$, since by Theorem \ref{theorem:1-player}, it is
			\[
				v_\infty(p) = M(p) = \begin{cases}
						0 & \text{if } p = 0,\\
						1 & \text{if } p > 0.
					\end{cases}
			\]
		\end{remark}

		\subsection{Examples in dimension 1}\label{subsection:dimension1}\label{subsection:explicit-examples}

		Consider three PMDPs with $\text{Bernoulli}(p)$ payoffs on the vertices (see Definition \ref{definition:def_onthevertices}) and respective action sets $\{1, 2\}$, $\{2, 3\}$ and $\{1, 3\}$.~We denote these by $\Gamma_{1, 2}$, $\Gamma_{2, 3}$ and $\Gamma_{1, 3}$, respectively. The first one was already introduced as Example \ref{example:dimension1-vertices}.~We also consider the following edge-based variant of the former.

		\begin{example}\label{example:dimension1}
			A PMDP on $\Z$ with $I = \{+1, +2\}$ and  $\text{Bernoulli}(p)$ payoffs. 
		\end{example}

		In this subsection, we give stationary 0-optimal strategies for these examples. For some of them, we establish the stronger property of optimality for every finite horizon.~The example $\Gamma_{1,3}$ requires a different approach, developed in the appendix.~We then apply the dynamic programming principle to derive a system of equations from which an expression for the limit value is deduced.

		These examples are interesting for two reasons.~First, they illustrate that the vertex-based model is indeed simpler than its edge-based counterpart. In particular, the PMDP $\Gamma_{1,2}$ admits a myopic optimal strategy, meaning that the player's decision rule at each vertex depends only on a finite neighborhood, whereas in the corresponding edge-based PMDP (Example \ref{example:dimension1}) the 0-optimal strategy proposed in Proposition \ref{prop:dimension1-game} is not myopic.~Second, they prove that even in simple one-dimensional settings, determining optimal strategies can be highly nontrivial, both with payoffs on the edges (Example \ref{example:dimension1}) and on the vertices ($\Gamma_{1,3}$).

		\subsubsection{On the vertices.}\label{subsection:onthevertices}

			We say that a stationary strategy is $k$-myopic if it only depends on the realization of the payoffs at the next $k$ positions to the right of the current position. In that case we write, for example $\sigma(011) = +2$ to say that the (3-myopic) strategy $\sigma$ plays the action $+2$ whenever the 3 payoffs to the right of the current position are $0,1,1$ in that order. A letter is a placeholder for either a 0 or a 1. For example, $\sigma(aba) = +3$ means that the action is $+3$ whenever the first and third payoffs to the right of the current position are identical (and the second one is irrelevant).~The strategy depends on $\omega$, but we omit it for the sake of readability.

			\begin{proposition}\label{prop:gamma_1} For $\Gamma_{1,2}$, the 1-myopic strategy
				\begin{equation*}
					\sigma^*(0) = + 2, \ \sigma^*(1) = + 1
				\end{equation*}
				is optimal for any $p$, and the value is $v_\infty(p)=2p-p^2$.
			\end{proposition}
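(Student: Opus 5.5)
The plan is to prove the stronger statement that $\sigma^*$ is optimal in $\Gamma_n^\omega(z)$ for \emph{every} horizon $n$ and every realization $\omega$. The value then follows by computing the asymptotic average payoff of $\sigma^*$ with the SLLN.

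\textbf{Setup.} Work with unnormalized totals: let $S_n(y)$ be the maximal total payoff over $n$ stages starting from $y$, with $S_0 \equiv 0$. In the vertex model the stage payoff of action $i$ at $y$ is $g(y+i) \in \{0,1\}$, so the Bellman equation reads
\[
S_n(y) = \max\bigl(g(y+1) + S_{n-1}(y+1),\; g(y+2) + S_{n-1}(y+2)\bigr).
\]
Two elementary facts follow directly from payoffs lying in $[0,1]$:
\begin{itemize}
\item[(a)] $S_{n-1}(y) \le S_n(y)$, by appending any step;
\item[(b)] $S_n(y) \le 1 + S_{n-1}(y)$, by dropping the last step of an optimal $n$-stage path.
\end{itemize}
It then suffices to show that at every $z$ and every $n\ge1$ the action prescribed by $\sigma^*$ attains the maximum in the Bellman equation.

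\textbf{Case analysis.}
\begin{itemize}
\item[] \emph{Case $g(z+1)=1$.} Playing $+1$ gives $1 + S_{n-1}(z+1)$. Moving from $z+1$ by $+1$ to $z+2$ shows $1+S_{n-1}(z+1) \ge 1 + g(z+2) + S_{n-2}(z+2)$. By (b), this is at least $g(z+2) + S_{n-1}(z+2)$, which is the value of playing $+2$.
\item[] \emph{Case $g(z+1)=0$.} Playing $+1$ gives $S_{n-1}(z+1) = \max\bigl(g(z+2)+S_{n-2}(z+2),\, g(z+3)+S_{n-2}(z+3)\bigr)$. Playing $+2$ gives $g(z+2)+S_{n-1}(z+2)$, and this dominates both terms of the max:
\begin{itemize}
\item[] the first by (a);
\item[] the second because $g(z+2)\ge0$ and, stepping $+1$ from $z+2$, $S_{n-1}(z+2) \ge g(z+3)+S_{n-2}(z+3)$.
\end{itemize}
\end{itemize}
The edge case $n=1$ is immediate. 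By the dynamic programming principle, $\sigma^*$ is therefore optimal for every finite horizon. In particular it is $0$-optimal, consistent with Theorem \ref{theorem:main-uniform}.

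\textbf{Value computation.} The stage payoffs generated by $\sigma^*$ form an i.i.d.\ sequence. At each stage from position $y$, the strategy inspects $g(y+1)$, and possibly $g(y+2)$ when $g(y+1)=0$, and then lands exactly on the last inspected vertex. Hence the next stage only involves vertices strictly to the right of everything inspected so far, which are fresh independent Bernoulli$(p)$ variables. The stage payoff is $1$ with probability $p$, and otherwise equals $g(y+2)\sim$ Bernoulli$(p)$. So the stage payoffs are i.i.d.\ with mean $p+(1-p)p = 2p-p^2$. By the SLLN, $\gamma_n(z,\sigma^*) \to 2p-p^2$ almost surely. Since $\sigma^*$ is optimal for each $n$, $v_n(z)=\gamma_n(z,\sigma^*)$, and hence $v_\infty(p)=2p-p^2$.

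The main obstacle is the $g(z+1)=0$ case. One must see that jumping $+2$ loses nothing, which requires comparing against both continuations from $z+1$. This is exactly where facts (a) and (b) and the nonnegativity of payoffs are used; the rest is bookkeeping. The independence claim in the value computation also needs care: it relies on the observation that the strategy always lands on the last inspected vertex.
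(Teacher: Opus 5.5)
Your proof is correct, and it differs from the paper's in both halves.

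For optimality, the paper argues by exchange. Given a strategy $\sigma$ that deviates from $\sigma^*$ at the current state, it builds an explicit modification $\sigma'$ whose payoff stream is at least as good, and then iterates. The modifications are, for instance, $+1,+1$ in place of $+2$ when $g(z+1)=1$, or $+2$ in place of $+1$, followed by a compensating $+1$ when needed. You instead verify the Bellman equation directly, reducing both cases to the horizon inequalities $S_{n-1}\le S_n\le 1+S_{n-1}$. This turns the paper's ``iterating this reasoning'' into an explicit induction on the horizon and avoids the bookkeeping of modified strategies.

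For the value, the paper runs the same one-step recursion in expectation to get $\mathbb{E}_p[n v_n]=n(2p-p^2)$. It then identifies the limit through the almost-sure deterministic limit of the oriented case (Theorem~\ref{theorem:iid_and_oriented_percolation_games}, in its vertex version) and dominated convergence. You observe instead that the stage payoffs of $\sigma^*$ are i.i.d.\ and apply the SLLN. Indeed the $m$-th payoff equals $\max\bigl(g(z_m+1),g(z_m+2)\bigr)$, and $z_m$ is a stopping time for the spatial filtration $\sigma(g(y):z<y\le k)$; this is the rigorous form of your ``last inspected vertex'' remark. Your route gives almost-sure convergence of $v_n(z)=\gamma_n(z,\sigma^*)$ on its own, without invoking the Garnier--Ziliotto theorem.

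The trade-off is that your shortcut is special to $\Gamma_{1,2}$. For $\Gamma_{2,3}$, $\Gamma_{1,3}$ or the edge-based example, the optimal strategy looks several vertices ahead and lands inside the inspected window, so its stage payoffs are no longer independent. The paper's expectation recursion, with extra conditioned quantities such as $w_n^1$, carries over to those cases.
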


			\begin{proof}
				We prove that $\sigma^*$ is optimal in any finitely repeated PMDP, by proving that it weakly dominates (never provides a lower payoff than) any other strategy. Let $\sigma$ be any strategy and fix the initial state $z\in\mathbb{Z}$, an environment $\omega \in \Omega$, and a horizon $n \geq 1$. Let $g_1, \ldots, g_n$ denote the sequence of payoffs given by $\sigma$ in the $n$-stage decision process.

				First assume that $g_\omega(z+1)=1$ and that $\sigma(z)=+2$. Define $\sigma'$ as the strategy that mimics $\sigma$ except that $\sigma'(z)=+1$ and $\sigma'(z+1)=+1$. Then the strategy $\sigma'$ will give a sequence of payoffs $1,g_1,\ldots, g_{n-1}$, which implies that $\sigma'$ weakly dominates $\sigma$. Assume now that $g_\omega(z+2)=0$ and that $\sigma(z)=+1$. There are two subcases. If $\sigma(z+1)=+1$, let $\sigma'$ be identical to $\sigma$ except that $\sigma'(z)=+2$. Then the strategy $\sigma'$ will give a sequence of payoffs $g_2,\ldots, g_{n-1},u$ for some $u\in\{0,1\}$, which implies that $\sigma'$ weakly dominates $\sigma$. If $\sigma(z+1)=+2$, let $\sigma'$ be identical to $\sigma$ except that $\sigma'(z)=+2$ and  $\sigma'(z+2)=+1$. Then the strategy $\sigma'$ will give a sequence of payoffs $1, g_2,\ldots, g_{n}$, which implies that $\sigma'$ strictly dominates $\sigma$.

				Iterating this reasoning we see that  $\sigma^*$ weakly dominates any other strategy and is thus optimal. 
				
				By the dynamic programming principle with the convention $v_0(z) = 0$ for all $z$, we get
				\[
					\E_p[nv_n] = p(1+\E_p[(n-1)v_{n-1}]) + (1-p)(p + \E_p[(n-1)v_{n-1}]) = n(2p - p^2),
				\]
				where we use the stationarity of $v_n(z)$ in $z$. Hence, $\E_p[v_{n}] \thicksim 2p-p^2$, and by the Dominated Convergence Theorem, $v_{\infty}(p) = \E_p[v_{\infty}]$.

			\end{proof}

			\begin{proposition}\label{prop:gamma_2} For $\Gamma_{2,3}$, the 4-myopic strategy
				\begin{equation*}
					\sigma^*(a01b) = +3, \sigma^*(a10b) = +2, \sigma^*(abb0) = +3, \sigma^*(abb1) = +2
				\end{equation*}
				is optimal for any $p$, and the limit value is $v_\infty(p)=\frac{p(3-3p+p^3)}{1+p-3p^2+2p^3}$.
			\end{proposition}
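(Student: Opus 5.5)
The plan follows the proof of Proposition~\ref{prop:gamma_1}: first an exchange argument showing that $\sigma^*$ weakly dominates every strategy at every finite horizon, then a renewal computation of the long-run average payoff. Fix $\omega$, a current state $z$ and a horizon $n$. Write $a,b,c,d$ for the payoffs at $z+1,\dots,z+4$, so the two possible next payoffs are $b$ (action $+2$) and $c$ (action $+3$). Here ``weakly dominates'' means that the cumulative payoff of the modified strategy is at least that of the original at every stage $m\le n$. This order is transitive and preserved under the kind of local surgery used in Proposition~\ref{prop:gamma_1}, so that iterating the modification from the first stage on shows $\sigma^*$ dominates everything.

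The exchange step splits into the four cases defining $\sigma^*$.
\begin{itemize}
\item[(i)] $b=1$, $c=0$ and $\sigma$ plays $+3$. The token lands on $z+3$ with payoff $0$ and next visits $z+5$ or $z+6$. The modification plays $+2$ (payoff $1$). It then plays $+3$ to reach $z+5$, or $+2,+2$ to reach $z+6$ (the $+2,+2$ detour passes through $z+4$ and costs one extra stage). Since it starts with a surplus of $1$, the cumulative sums are not smaller.
\item[(ii)] $b=0$, $c=1$. This case is symmetric: playing $+3$ collects the $1$ immediately. Any position $z+4$, $z+5$, \dots\ that $\sigma$ reaches through $z+2$ is re-joined with at most a one-stage delay. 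For example, $z+4$ is not reachable from $z+3$, but $z+5$ and $z+6$ are; if $\sigma$ passes through $z+4$, one can redirect to rejoin its path at $z+6$ or $z+7$, banking the surplus $1$.
\item[(iii)] $b=c$ and $d=1$. Playing $+2$ keeps $z+4$ reachable, which is the only future position reachable from one branch and not the other.
\item[(iv)] $b=c$ and $d=0$. Playing $+3$ avoids being forced through a dead payoff, since from $z+3$ one reaches $z+5,z+6$ directly.
\end{itemize}

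The delicate point in all cases is reconciling the differing parity of arrival times: the two branches re-merge at a common vertex after different numbers of stages. The surplus gained (or the zero avoided) must pay for the shift for every horizon, including when $n$ ends mid-detour. I would handle this by an explicit small case table on the window $z+1,\dots,z+6$, checking each residual configuration by hand.

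For the value, I would set up a finite Markov chain. Because $\sigma^*$ looks four sites ahead but moves only $2$ or $3$, after a move some of the next window is already revealed. I would take as state the revealed bits still relevant at the new position: after $+2$, the bits at offsets $1,2$; after $+3$, the bit at offset $1$, plus the conditioning information from which rule fired. I would then enumerate the transitions with their $p$-probabilities and the collected payoff, compute the stationary distribution $\pi$ and set
\[
v_\infty(p)=\frac{\sum_s \pi(s)\,\E[\text{payoff}\mid s]}{\sum_s \pi(s)\,\E[\text{steps}\mid s]}.
\]
Here ``steps'' is one per move, so the denominator is $1$, and the formula is simply the stationary expected payoff per move. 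The identification of $v_\infty$ with this quantity follows from the ergodic theorem together with the optimality of $\sigma^*$ for every $n$ (so that $v_n$ equals the $\sigma^*$ average), exactly as dominated convergence was used in Proposition~\ref{prop:gamma_1}.

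Simplifying should give $\frac{p(3-3p+p^3)}{1+p-3p^2+2p^3}$. As sanity checks, $p=1$ gives value $1$ and $p\to0$ gives value $\sim 3p$.

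The main obstacle is the exchange argument in the $b=c$ cases and in case (ii), where domination must hold uniformly in the horizon despite the time shift.
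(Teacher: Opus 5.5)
Your proposal is correct and follows essentially the paper's route: a four-case exchange argument showing that $\sigma^*$ weakly dominates every strategy at every horizon, with cases (i)--(iv) matching the paper's, and the one-stage-shift subcases still check out under your stronger prefix-domination notion. Your Markov chain reduces to the paper's two states (no information vs.\ $g_\omega(z+2)=1$ known), so the stationary-average computation is equivalent to the paper's recursion on $w_n$ and $w_n^1$ and does give $\frac{p(3-3p+p^3)}{1+p-3p^2+2p^3}$.
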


			\begin{proof}
				% Once again we prove that $\sigma^*$ is optimal in any finitely repeated PMDP, by proving that it weakly dominates any other strategy. 
				The proof follows the same line of argument as the proof of Proposition \ref{prop:gamma_1}. Let $\sigma$ be any strategy and fix $z\in\mathbb{Z}$, $\omega\in\Omega$, and $n \geq 1$.~For each of the 4 possible differences with $\sigma^*$ we will give a strategy $\sigma'$ that differs from $\sigma$ only at the beginning and weakly dominates it; we let the reader check that the sequence of payoffs is indeed at least as good for $\sigma'$ than for $\sigma$.

				First, assume the future is $a01b$ and $\sigma(z)=+2$. If $\sigma(z+2)=+3$, then doing ``+3 then +2'' strongly dominates $\sigma$.~If $\sigma(z+2)=+2$ and $\sigma(z+4)=+3$ then doing ``+3 then +2 then +2'' weakly dominates $\sigma$.~If $\sigma(z+2)=+2$ and $\sigma(z+4)=+2$ then doing ``+3 then +3'' weakly dominates $\sigma$. 
				
				Second, assume the future is  $a10b$ and $\sigma(z)=+3$.  If $\sigma(z+3)=+2$, then doing ``+2 then +3'' strongly dominates $\sigma$. If $\sigma(z+3)=+3$, then doing ``+2 then +2 then +2'' weakly dominates $\sigma$. 
				
				Third, assume the future is $abb0$ and $\sigma(z)=+2$. If $\sigma(z+3)=+2$, then doing ``+3 then +2'' gives the same sequence of payoffs as $\sigma$. If $\sigma(z+2)=+2$ and $\sigma(z+4)=+3$ then doing ``+3 then +2 then +2'' weakly dominates $\sigma$. If $\sigma(z+2)=+2$ and $\sigma(z+4)=+2$ then doing ``+3 then +3'' weakly dominates $\sigma$. 
				
				Finally, assume the future is  $abb1$ and $\sigma(z)=+3$. If $\sigma(z+3)=+2$, then doing ``+2 then +3'' gives the same sequence of payoffs as $\sigma$. If $\sigma(z+3)=+3$, then doing ``+2 then +2 then +2'' weakly dominates $\sigma$.

				Iterating this reasoning we see that $\sigma^*$ weakly dominates any other strategy and is thus optimal. 

				Denote now $w_n=\mathbb{E}[nv_n(z)]$, and $w_n^1=\mathbb{E}[nv_n(z)|g_\omega(z+2)=1]$ the conditional expectation of the value of the $n$-stage decision process, given that the payoff in the state 2 positions to the right is 1. Since $\sigma^*$ is optimal, the dynamic programming principle implies that
				\[
					w_n=2p(1-p)(1+w_{n-1})+p^2(1-p)(1+w_{n-1})+(1-p)^3w_{n-1}+p^3(1+w^1_{n-1})+p(1-p)^2w^1_{n-1}.
				\]
				Moreover, $\sigma^*$ when $g_\omega(z+2)=1$ gives
				\[
					w^1_n=(1-p)(1+w_{n-1})+p(1-p)(1+w_{n-1})+p^2(1+w^1_{n-1}).
				\]
				Letting $\delta_n=w^1_n-w_n$ yields
				\begin{eqnarray}
					w_n&=&2p-p^2+w_{n-1}+p(2p^2-2p+1)\delta_{n-1}\\
					\label{2eequaVdelta}
					w_n+\delta_n&=&1+w_{n-1}+p^2\delta_{n-1}.
				\end{eqnarray}
				Subtracting the first equation from the second we get 
				\[
					\delta_n=(1-p)^2-p(1-p)(1-2p)\delta_{n-1}.
				\]
				Since $|p(1-p)(1-2p)|<1$ for any $p\in[0,1]$, $\delta_n$ converges to $\delta := \frac{(1-p)^2}{1+p-3p^2+2p^3}$. Using equation \eqref{2eequaVdelta}, $w_n\sim n (1-(1-p^2)\delta)$ which yields the result.

			\end{proof}

			\begin{proposition} \label{propex13}
				Let $p^*\sim 0.26102$ be the only real root of $2X^3-5X^2+5X-1$. For $p\in[0,p^*]$, the 2-myopic strategy
				\begin{equation*}
					\sigma^*(00) = +3, \sigma^*(01) = +1, \sigma^*(10) = +1, \sigma^*(11) = +1
				\end{equation*}
				is optimal for $\Gamma_{1,3}$. For $p \in (p^*,1)$, no myopic strategy is optimal; an optimal strategy is given by
				\begin{eqnarray*}
					\sigma^*(00)&=&+3\\
					\sigma^*(010^k1)&=&+3 \text{ if } k \equiv 0\Mod{3} \text{ or } k \equiv 1\Mod{3}\\
					\sigma^*(010^k1)&=&+1 \text{ if } k \equiv 2\Mod{3}\\
					\sigma^*(10^k1)&=&+1 \text{ if } k \equiv 0\Mod{3} \text{ or } k \equiv 2\Mod{3}\\
					\sigma^*(10^k1)&=&+3 \text{ if } k \equiv 1\Mod{3}.
				\end{eqnarray*}
				The uniform value is 
				\[
					v_\infty(p) = \begin{cases}
									p(3-3p+p^{2})/(1+p-p^2) & \text{ if } p\leq p^*,\\
									\frac{p\left(28 - 90p + 127p^2 - 98p^3 + 43p^4 - 10p^5 + p^6\right)}{12 - 29p + 25p^2 - 2p^3 - 10p^4 + 6p^5 - p^6} & \text{ if } p\geq p^*.
								\end{cases}
				\]
			\end{proposition}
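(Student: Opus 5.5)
The plan is to follow the pattern of Propositions \ref{prop:gamma_1} and \ref{prop:gamma_2}, but with a different encoding, since for $p>p^*$ the optimal decision depends on the length of a run of zeros and no exchange argument over a fixed window suffices.

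The first step is structural. In $\Gamma_{1,3}$ the token can pick up every $1$ except where a configuration forces a loss. Moves $+1$ visit every vertex, while $+3$ skips two vertices and lands three ahead. So the problem becomes how to traverse blocks of consecutive zeros at the cheapest cost. Between two $1$'s separated by $k$ zeros, the choices are the following: land on as few zeros as possible using $+3$ jumps, which requires the right residue modulo $3$, or sacrifice a $1$ to realign the residue.

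Accordingly, I would encode the environment as the sequence of gaps $(k_j)$ between consecutive ones; these are i.i.d. Geometric($p$) on $\N$. The residue class modulo $3$ of the token relative to the next $1$ then becomes the state of a finite Markov chain. The state records whether the token sits on a $1$, one vertex before it, or two before it, together with the residue class of the current gap. For a given gap length $k$, starting from a given offset, the cost (number of zeros landed on) is computed by hand for each option, including the option of skipping the next $1$ via a $+3$ jump. This yields a Bellman equation with relative values $h(r)$ for $r\in\{0,1,2\}$ and gain $v_\infty$. Playing $+1$ on a $1$ or just before a $1$ corresponds to $\sigma^*(01),\sigma^*(10),\sigma^*(11)$ and the $10^k1$, $010^k1$ rules.

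Next, I would solve the average-reward optimality equation. Since the gap length is revealed in advance (full information), the Bellman operator takes, for each realized $k$, the maximum over the finitely many joint plans across the gap. I would guess the two candidate policies in the statement, and for each compute the stationary distribution of the offset chain and the gain. Summing geometric series over $k\equiv 0,1,2 \pmod 3$ gives rational functions of $p$; these should reproduce the two stated formulas. Then I would verify optimality by checking that, with the computed $h$, the prescribed action attains the maximum for every $k$ and offset. The comparison inequalities reduce to polynomial sign conditions in $p$. I expect the threshold between the two regimes to be exactly the root of $2X^3-5X^2+5X-1$, namely the point where the relative-value difference $h(1)-h(0)$ crosses the one-step gain from sacrificing a $1$. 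To pass from the average-optimality equation to optimality for the limit value, I would use a verification theorem: bounded $h$ implies $n v_n\le n v_\infty + 2\|h\|_\infty$ for every strategy, while $\sigma^*$ attains this bound up to $O(1)$. The SLLN applied to the renewal structure of the gaps gives almost sure convergence.

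The non-myopicity claim for $p>p^*$ follows because the optimal action at $10^k1$ depends on $k \bmod 3$ for arbitrarily large $k$, each of which occurs with positive probability.

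The main obstacle is the correct case analysis of within-gap plans. In particular, the option of jumping over an isolated $1$ couples consecutive gaps, so the state must carry the offset across gaps. Getting this bookkeeping right, and verifying the polynomial inequalities across the whole range, is the delicate and computation-heavy part. I would first derive the equations symbolically and then check the inequalities, for instance by sign analysis with a computer algebra system.
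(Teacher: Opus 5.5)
\textbf{The gap is in your verification step.} An average-reward optimality equation with relative values $h(r)$ on a finite offset state, with gap lengths revealed one at a time, certifies optimality only against strategies whose choice at a gap does not use gaps further ahead than your state reveals. In $\Gamma_{1,3}$ the decision maker sees the whole realization. As you note yourself, skipping an isolated $1$ couples consecutive gaps. A strategy that knows gaps $j+1,j+2,\dots$ can therefore choose the exit offset of gap $j$ according to the realized continuation rather than its average $h$. So the inequality $\mathbb{E}[r+h(s')\mid \text{past}]\le v+h(s)$, which your bound ``$nv_n\le nv_\infty+2\|h\|_\infty$ for every strategy'' relies on, is not available.

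Read pathwise, that bound is false for $0<p<1$. On the event that the next $n$ payoffs are all $1$, which has probability $p^n>0$, one has $nv_n=n$, and this exceeds $nv_\infty+2\|h\|_\infty$ for large $n$. Read in expectation, the bound is precisely the claim that information beyond the next gap is worthless, which is what has to be proved.

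\textbf{How the paper handles it.} The paper argues pathwise. It uses local exchange comparisons that hold for every realization of the relevant window; for example, on a future starting with $00$, $+1+3$ is weakly worse than $+3+1$, and $+1+1+3$ is weakly worse than $+3+1+1$. For the one comparison whose sign depends on the continuation, $+3$ versus $+1+1+1$, a discounted computation gives the difference $\lambda\bigl(-1+2\gamma_\lambda(z+3,\sigma)+o(1)\bigr)$. This is settled because continuation values converge a.s.\ to the deterministic constant $v_\infty$.

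Hence the threshold is exactly $v_\infty(p)=\tfrac12$: one extra $1$ is traded against two extra stages. Indeed, $p(3-3p+p^2)/(1+p-p^2)=\tfrac12$ is equivalent to $2p^3-5p^2+5p-1=0$. It is not a crossing of $h(1)-h(0)$. The paper locates this threshold via monotonicity and continuity in $p$ as an interval $[p_1,p_2]$, then collapses it to $\{p^*\}$ using both closed forms. Long zero runs are handled by induction, removing three zeros at a time. The resulting ``optimal up to swapping two payoffs per block'' is turned into $0$-optimality by the almost-optimality lemma.

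Your renewal computation of the gains of the two candidate strategies is an acceptable substitute for the paper's discounted fixed-point equations, but the upper bound is missing. To keep a Bellman approach, you would need an $\omega$-dependent relative value, namely a Viterbi recursion over offsets along the realized gap sequence. You would also need a proof that its offset differences are determined by the next gap alone.

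\textbf{Non-myopicity.} Your argument here also falls short: it only shows that this particular $\sigma^*$ is not myopic. Optimal strategies are far from unique here. There are exact ties, and under the long-run average criterion any density-zero set of mistakes is free. So you must show that every $k$-myopic strategy loses a fixed positive amount on configurations of positive density. You must also show that this loss cannot be recovered later in the gap by readjusting the residue.

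The paper does this with the blocks $10^{3k'+2}1001111$ and $10^{3k'+1}1001111$. The terminal run of ones forces any optimal play through a fixed state, which pins down the number of $+1$ moves modulo $3$. In the second case, the strict comparison ``$4$ ones and $k'$ zeros beats $5$ ones and $k'+1$ zeros'' uses $v_\infty(p)>\tfrac12$, i.e.\ $p>p^*$.
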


			The proof of this example is considerably more technical than the previous ones and for the sake of readability, we relegate it to Appendix \ref{section:appendix}.~Instead, we introduce some natural preorder on the finite subsets of $\N_+$.

			Let $\mathcal{P}_f(\N_+)$ be the set of nonempty finite subsets of $\N_+$. For any $I\in\mathcal{P}_f(\mathbb{N}_+)$ and $p\in[0,1]$ we let $v_\infty^I(p)$ denote the uniform value of the decision process with action set $I$ and Bernoulli parameter $p$. 
			
			\begin{definition}[Preorder on $\mathcal{P}_f(\mathbb{N}_+)$] $I\preceq J$ if and only if $v_\infty^I(p)\leq v_\infty^J(p)$ for every $p\in[0,1]$.
			\end{definition}
			
			Clearly this preorder is finer than the inclusion: $I\subset J \implies  I\preceq J$. Also, this is not an order as, for example, $I\preceq J$ and $J\preceq I$ whenever there exists $a$ and $b$ in $\mathbb{N}_+$ such that $aI = bJ$. From the computations in the previous propositions, we have $\{1,2\}\preceq \{1,3\}\preceq \{2,3\}$. In fact, we can say more.

			\begin{proposition} It holds that
				\begin{itemize}[noitemsep, topsep=0pt]
					\item $\{1,2\}\preceq I$ for any $I$ with at least two elements, and
					\item $\{2,3\}\preceq I$ for any $I$ with at least two elements whose elements do not divide each other.
				\end{itemize}
				 
			\end{proposition}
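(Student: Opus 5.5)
The plan is to compare values through explicit strategies rather than exact computations. The comparison $I\subset J \implies I\preceq J$ was already noted. So for both items it suffices to treat a two-element action set $\{a,b\}\subset I$ with $a<b$, and to exhibit a strategy in $\Gamma_{a,b}$ whose average payoff converges a.s.\ to (at least) the value of the reference process. The value $v^{\{a,b\}}_\infty$ is the a.s.\ limit of $v_n$, and $v_n\ge\gamma_n(z,\sigma)$ for any $\sigma$, so any such strategy gives the inequality for every $p$.

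\textbf{First item.} Transpose the strategy of Proposition~\ref{prop:gamma_1}: at position $z$, play $+a$ if $g(z+a)=1$, and $+b$ otherwise. The key observation is that the token moves strictly to the right, and after each step the only vertex whose value has been revealed at or beyond the new position is the new position itself. Indeed, after $+a$ we only looked at $z+a$; after $+b$ we looked at $z+a<z+b$ and at the reward $g(z+b)$. Hence at each stage the two vertices inspected, $z+a$ and $z+b$, are fresh. The stage payoffs are therefore i.i.d.\ with mean $p+(1-p)p=2p-p^2$. The SLLN gives $\gamma_n\to 2p-p^2=v^{\{1,2\}}_\infty(p)$ a.s., so $\{1,2\}\preceq I$.

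\textbf{Second item.} Now assume $a\nmid b$, so in particular $b\notin\{2a,3a\}$ and $b>a$. Transpose the 4-myopic strategy of Proposition~\ref{prop:gamma_2} by relabelling offsets: the offsets $2,3,4$ from the current position in $\Gamma_{2,3}$ are replaced by $a,b,2a$. The rule becomes: play $+b$ if $(g(z+a),g(z+b))=(0,1)$; play $+a$ if it is $(1,0)$; otherwise play $+b$ if $g(z+2a)=0$ and $+a$ if $g(z+2a)=1$.

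I would then show that the process $(g(z_m+a),g(z_m+b),g(z_m+2a))$ observed along the play has the same Markovian structure as in $\Gamma_{2,3}$. Concretely, the vertex $z+2a$ becomes the next ``$a$-offset'' after playing $+a$, exactly as $z+4$ does after $+2$ in $\Gamma_{2,3}$, while every other queried vertex is fresh. This requires checking that no previously observed vertex lies at offset $a$, $b$ or $2a$ from a later position, except through the relation $z+2a=(z+a)+a$. The possible collisions are of the form $2a-b\in\{a,b,2a\}$, or more generally $ia=jb$ for small $i,j$ (e.g.\ $b=2a$, $b=3a$). All of these are excluded by $b>a$ and $a\nmid b$.

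Once this is checked, the pair (current position, whether $g(z_m+a)$ is already known to be $1$) evolves as in the proof of Proposition~\ref{prop:gamma_2}. One then obtains the same recursion for $w_n,w_n^1$, hence the same limit $\frac{p(3-3p+p^3)}{1+p-3p^2+2p^3}$. An ergodic theorem for the resulting finite Markov chain upgrades this from expectations to a.s.\ convergence. If in $\Gamma_{2,3}$ the coincidence $3\cdot 2=2\cdot 3$ ever causes an already-seen vertex to be queried, the general case has strictly more fresh randomness. In that case I would argue by coupling that the transposed payoff stream stochastically dominates, rather than equals, the $\Gamma_{2,3}$ stream.

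\textbf{Main obstacle.} The main obstacle is exactly this bookkeeping of which vertices have been revealed along the play, across several consecutive steps and not only one. I would organise it by proving by induction that, at each stage, the set of revealed vertices strictly to the right of the current position is either empty or equal to $\{z_m+a\}$ with known value $1$. This invariant is precisely what makes the two-state recursion of Proposition~\ref{prop:gamma_2} valid for $\{a,b\}$.
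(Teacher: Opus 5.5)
Your strategies are the paper's, and so is the reduction: the proof comes down to showing they produce the recursions of Propositions~\ref{prop:gamma_1} and~\ref{prop:gamma_2}. The first item is fine as written. In the second item, however, the induction you propose for the bookkeeping step fails, because the invariant is false. Take a fresh position $z$ with $(g(z+a),g(z+b))=(1,0)$. The strategy plays $+a$, and the vertex $z+b$, whose value $0$ was used in the decision, now lies strictly to the right of the new position $z+a$. The same thing happens in two other cases:
\begin{itemize}[noitemsep,topsep=0pt]
\item in the tie case with $g(z+2a)=1$, both $z+b$ and $z+2a$ lie ahead of $z+a$;
\item in the tie case with $g(z+2a)=0$ and $b<2a$, the vertex $z+2a$ lies ahead of $z+b$.
\end{itemize}
So the set of revealed vertices ahead of the token is in general neither empty nor $\{z_m+a\}$.

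The invariant that does hold is this: every revealed vertex $y>z_m$ either equals $z_m+a$ (with value $1$) or satisfies $y-z_m\notin\{ka+lb:\ k,l\in\mathbb{N}\}$, i.e.\ it is unreachable from $z_m$. Every vertex queried later has the form $z_m+ka+lb$ with $(k,l)\neq(0,0)$, so unreachable vertices are never queried again and every new query is fresh. Preserving this invariant is exactly where the hypothesis enters, and it is the paper's ``crucial point''. The vertex $z+b$ is unreachable from $z+a$ because $b-a=ka+lb$ forces $l=0$ (otherwise the right side is at least $b$), hence $a\mid b$. The vertex $z+2a$ is unreachable from $z+b$ because $0<2a-b<a$. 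Your collision list $2a-b\in\{a,b,2a\}$ checks only the offsets queried at the very next stage, so it never isolates the relevant condition $b-a\in a\mathbb{N}+b\mathbb{N}$. That condition must be checked against all future positions.

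You also claim the coincidences $ia=jb$ are excluded by $a\nmid b$, which is false: $3\cdot 2=2\cdot 3$ holds for $\{2,3\}$ itself. These coincidences are harmless, though. Two paths meeting at a vertex does not make any already-revealed value be queried again; only re-querying would break the conditional independence. Your fallback ``stochastic domination'' argument is therefore unnecessary, and it would also be unjustified as stated: more fresh randomness does not by itself yield a larger payoff for a fixed strategy. With the corrected invariant, the two-state chain on $\{\text{fresh},\ g(z_m+a)=1\ \text{known}\}$ has exactly the transition probabilities and stage payoffs of the $w_n,w_n^1$ recursion, with equality rather than domination.
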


			\begin{proof}
				For the first part, let $a<b$ be two distinct elements of an action set $I$. Consider the following stationary strategy that at state $z$. The strategy plays $a$  when the payoff in state $z+a$ is 1. It plays $b$ when the payoff in state $z+a$ is 0. Computing the payoff of this strategy gives exactly the same equations as in Proposition \ref{prop:gamma_1}. Since the player can at least guarantee the payoff of this strategy, the statement follows.

				For the second part, let $a<b$ be two distinct elements of an action set $I$ and assume $a$ does not divide $b$. Consider the stationary strategy that in state $z$ plays $a$ either: when the payoff in state $z+a$ is 1 and the payoff in state $z+b$ is 0; or when the payoff in state $z+a$ is equal to the payoff in state $z+b$, and the payoff in state $z+2a$ is 1, and plays $b$ either; when the payoff in state $z+a$ is 0 and the payoff in state $z+b$ is 1; or when the payoff in state $z+a$  is equal to the payoff  in state $z+b$, and the payoff in state $z+2a$ is 0.

				Computing the payoff of this strategy gives exactly the same equations as in Proposition \ref{prop:gamma_2} with the crucial point being that state $z+b$ is inaccessible from state $z+a$, because of the assumption that $a$ does not divide $b$. Since the player can at least guarantee the payoff of this strategy, the second statement follows.

			\end{proof}

			It is not immediately obvious whether this preorder is total. We now explain why it is not.

			\begin{proposition}
				For $m \gg 1$, the two sets $I_m:=\{1,2,3,\ldots,m\}$ and $J_m:=\{1,m+1\}$ are not comparable.
			\end{proposition}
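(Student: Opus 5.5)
The plan is to show that the two value functions cross: $v_\infty^{J_m}(p)>v_\infty^{I_m}(p)$ for $p$ close to $0$, and $v_\infty^{I_m}(p)>v_\infty^{J_m}(p)$ for $p$ close to $1$. Each regime needs an upper bound on one value and a matching lower bound on the other, coming from an explicit stationary strategy. Throughout, payoffs are on the vertices, as in the other one-dimensional examples.

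\textbf{Small $p$: $J_m$ beats $I_m$.} For the upper bound on $v_\infty^{I_m}(p)$, note that every move lies in $\{1,\dots,m\}$. So in $n$ stages the token stays in $[z,z+nm]$, and it can collect each $1$ at most once because the PMDP is oriented. Hence $n v_n^{I_m}(z)$ is at most the number of ones in $(z,z+nm]$, and the SLLN gives $v_\infty^{I_m}(p)\le mp$.

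For the lower bound on $v_\infty^{J_m}(p)$, I would use a look-ahead strategy that repeatedly targets the next vertex $y$ carrying a $1$. Write $d=y-z$ for its distance from the current position $z$. Reach $y$ exactly by playing $k=\lfloor d/(m+1)\rfloor$ moves $+(m+1)$ and $j=d-k(m+1)\le m$ moves $+1$. This uses at most $d/(m+1)+m$ stages and earns at least $1$. The gaps between consecutive ones are i.i.d.\ geometric with mean $1/p$, so the renewal reward theorem should give
\[
v_\infty^{J_m}(p)\;\ge\;\frac{1}{\frac{1}{(m+1)p}+m}\;=\;\frac{(m+1)p}{1+m(m+1)p}.
\]
This exceeds $mp$ as soon as $m(m+1)p<1/m$, that is, for all $p\in(0,p_m)$ with $p_m\approx m^{-3}$. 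One small point: the target is a $1$ we are heading to, while the $1$'s met along the way only help. Theorem~\ref{theorem:main-uniform} identifies the value of this stationary strategy with a lower bound on the uniform value.

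\textbf{$p$ near $1$: $I_m$ beats $J_m$.} With $I_m$, the greedy strategy \emph{jump to the nearest $1$ within distance $m$, otherwise play $+m$} earns $1$ at each stage unless the next $m$ vertices are all $0$. This should give $1-v_\infty^{I_m}(p)\le C(1-p)^m$.

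For $J_m$ I would show $1-v_\infty^{J_m}(p)\ge c(1-p)^2$, which exceeds $C(1-p)^m$ for $m\ge3$ and $p$ close to $1$. From $z$ the only landing sites are $z+1$ and $z+m+1$. If both are $0$, the next stage pays $0$ whatever the strategy does. Every trajectory is a sequence of positions with increments in $\{1,m+1\}$. The density of positions $z$ with $g(z+1)=g(z+m+1)=0$ is $(1-p)^2$, and any trajectory must visit a positive fraction of the positions of its path in a suitably thinned family of such patterns.

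To make the lower bound rigorous, I would select disjoint windows of length $2(m+1)$ in which a fixed zero pattern forces a loss. The cleanest candidate is two zeros at $x+1$ and $x+m+2$ together with zeros blocking every entry route. Then I would use a counting (first-moment) argument over the $2^n$ action sequences: the probability that a given path avoids all forced losses decays like $\exp(-c(1-p)^{K}n)$ for some fixed $K$. That decay is too weak against $2^n$ paths, so I would instead use a deterministic covering argument. For any infinite increasing path with steps in $\{1,m+1\}$, every window $[x,x+m+1]$ is either entered at $x$ or crossed by a $+(m+1)$ jump landing within distance $m$. This forces the path to pay $0$ on a pattern occurring with frequency of order $(1-p)^{m+2}$. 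That frequency still beats $(1-p)^{m}$ from $I_m$ only if the exponent works out, so the right forcing pattern must be chosen carefully, possibly using $J_m$'s inability to skip runs of two zeros, which gives loss of order $(1-p)^2$ per run.

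\textbf{Main obstacle.} I expect this $p\to1$ lower bound on the loss for $J_m$, uniformly over all (non-stationary, look-ahead) strategies, to be the hardest step. If it resists, I would first try the fallback of comparing at the level of $\E[v_n]$ via dynamic programming on a $(m+1)$-window. By contrast, the small-$p$ regime is essentially complete given the renewal computation.
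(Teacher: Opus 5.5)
Your small-$p$ half is correct and is essentially the paper's argument. You use the same strategy for $J_m$: reach the next $1$ exactly, with $\lfloor d/(m+1)\rfloor$ long jumps and at most $m$ unit steps. This gives $v_\infty^{J_m}(p)\ge 1/\bigl(m+\frac{1}{(m+1)p}\bigr)$. Your bound $v_\infty^{I_m}(p)\le mp$ is a fine substitute for the exact value $1-(1-p)^m$ used in the paper, so $J_m\not\preceq I_m$ for every $m$.

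The other half has a genuine gap. For fixed $m$ and $p\to1$, $J_m$ is strictly \emph{better} than $I_m$, so the inequality you aim for there is false. Your bound $1-v_\infty^{J_m}(p)\ge c(1-p)^2$ fails because the configuration $g(z+1)=g(z+m+1)=0$ only traps a player standing at $z$. A player who sees the environment simply avoids $z$, e.g.\ from $z-1$ jump to $z+m$ and then to $z+2m+1$.

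More precisely, let $A$ be the set of sites reachable from the current position while landing only on $1$'s. Then $x\in A$ and $g(x+m+1)=1$ imply $x+m+1\in A$. Hence $|A\cap[x,x+m]|$ drops by at most one per unit shift of $x$, and only at a zero. A loss is forced only when such a window becomes empty. Starting from a full window, this needs at least $m+1$ zeros within a stretch of length $O(m^2)$, since a nonempty window refills after $2(m+1)$ consecutive ones. One deduces $1-v_\infty^{J_m}(p)=O_m\bigl((1-p)^{m+1}\bigr)=o\bigl(1-v_\infty^{I_m}(p)\bigr)$.

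For $m=2$ you can read this off the paper's own formulas. Proposition~\ref{propex13} gives $1-v_\infty^{\{1,3\}}(p)\sim 2(1-p)^3$, against $1-v_\infty^{\{1,2\}}(p)=(1-p)^2$. So no choice of forcing pattern, and no dynamic-programming fallback, can save the $p\to1$ route.

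The missing idea is to look for $v_\infty^{I_m}(p)>v_\infty^{J_m}(p)$ at a \emph{fixed small} $p$ with $m\to\infty$. This is exactly what the hypothesis $m\gg1$ is for; for $m=2$ the sets are comparable, since $\{1,2\}\preceq\{1,3\}$. In this regime $v_\infty^{I_m}(p)=1-(1-p)^m\to1$, while $v_\infty^{J_m}(p)$ stays bounded away from $1$.

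The paper obtains the latter from Theorem~\ref{theorem:convergence_game_m}, which gives $v_\infty^{J_m}(p)\to v_\infty^{(2)}(p)$, combined with continuity of $v_\infty^{(2)}$ at $p=0$ (Proposition~\ref{prop:bernoulli_continuity}). You could also bypass dimensional lifting. Use the first-moment bound from the proof of Proposition~\ref{prop:bernoulli_continuity} with $|I|=2$ and $\lambda=\ln(1/p)$; it applies since payoffs along a fixed path of an oriented PMDP are i.i.d., also with vertex payoffs. It gives $v_\infty^{J_m}(p)\le 2\ln 2/\ln(1/p)$ uniformly in $m$, which is below $1/2$ for $p<1/16$. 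Together with your small-$p$ half, this yields incomparability for all sufficiently large $m$.
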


			\begin{proof}
				By the same arguments as in the proof of Proposition \ref{prop:gamma_1}, for any $m$ the optimal strategy for action set $I_m$ is to play the least action that gives a payoff of 1 if there is any, and play $m$ if the next $m$ payoffs are all 0. This implies that $v_\infty^{I_m}(p)=1-(1-p)^m$. In particular for a fixed $m$, $v_\infty^{I_m}(p)\sim mp$ as $p$ goes to 0; and for a fixed $p>0$, $v_\infty^{I_m}(p) \sim 1$ as $m$ goes to $\infty$.

				For $J_m$ with $m$ large enough, we cannot do a precise computation of the value  $v_\infty^{J_m}(p)$. However, for a fixed $p$, by the dimensional lifting property (see Theorem \ref{theorem:convergence_game_m} in Section \ref{section:dimensional-lifting-oriented-percolation}), we know that $v_\infty^{J_m}(p)$ tends to $v_{\infty}^{(2)}(p)$ as $m \to \infty$. In particular, by Proposition \ref{prop:bernoulli_continuity}, $v_{\infty}^{(2)}(p)$ is continuous at $p = 0$; thus, $v_\infty^{J_m}(p)$ does not tend to 1 for $p$ small enough, which implies that $I_m \npreceq J_m$.

				On the other hand, fix $m$ and consider the following greedy strategy: play $1$ if any of the next $m$ payoffs is 1, and otherwise play $m+1$. Then, if the first payoff of 1 is $n$ steps to the right, it takes $\left\lfloor n/(m+1)\right\rfloor$ moves of $(m+1)$ and $n-(m+1)\left\lfloor n/(m+1)\right\rfloor$ moves of 1 to get there. This implies that under this strategy, the expected number of moves until the next payoff of 1 is

				\begin{eqnarray*}
					\sum_{n=1}^{+\infty} p(1-p)^{n-1} \left(n-m\left\lfloor\frac{n}{m+1}\right\rfloor\right)&<&\sum_{n=1}^{+\infty} p(1-p)^{n-1} \left(n-m\frac{n}{m+1}+m\right)\\&=&\sum_{n=1}^{+\infty} p(1-p)^{n-1} \left(\frac{n}{m+1}+m\right)\\
					&=&m+\frac{1}{(m+1)p}
				\end{eqnarray*}

				Hence, $v_\infty^{J_m}(p)>1/\left(m+\frac{1}{(m+1)p}\right)\sim (m+1)p$ as $p$ goes to 0. Thus, for $p$ small enough, $v_\infty^{J_m}(p)>v_\infty^{I_m}(p)$ and thus $J_m\npreceq I_m$ for every $m$.

			\end{proof}

			% We conjecture that $J_m \preceq J_{m'}$ whenever $m\leq m'$ but proving this seems surprisingly difficult. 

		\subsubsection{On the edges.}\label{subsection:ontheedges}

		For Example \ref{example:dimension1} we follow the same approach, except that we derive and solve a system of equations for the $\lambda$-discounted value, as defined below.

		\begin{definition}[Section 1.4 of \cite{Solan2022}]
			For a discount factor $\lambda \in (0, 1]$, the $\lambda$-discounted value is defined as 
			\[
				v_{\lambda}^{\omega}(z) = \sup_{\sigma \in \Sigma}\lambda \sum_{m=1}^{+\infty}(1-\lambda)^{m-1} g_\omega(z_m, \sigma_m(\omega, z)).
			\]
		\end{definition}

		We solve the system for the $\lambda$-discounted value because the infinite-horizon formulation and stationary reduce the problem to a system of only three variables. By contrast, the $n$-stage formulation requires tracking all finite-stage values up to $n$ as separate variables.~To validate this method, we must ensure that the limit value coincides with the $\lambda$-discounted value as the discount factor tends to $0$.~This follows from \cite[Lemma 5.4]{renault2011uniform}, which establishes that, if the uniform value exists, then $v_{\infty}$ coincides with the limit of the discounted value as $\lambda \to 0$.~In our case, the existence of the uniform value is guaranteed by Theorem \ref{theorem:main-uniform}, and thus:
		\begin{equation}\label{eq:limit_value_eq_discounted_value}
			v_{\infty} = \lim_{\lambda \to 0}v_{\lambda}(z), \; \Pbb\text{-a.s.}
		\end{equation}

		To represent a sequence of edge payoffs along a path in a compact form, we use $\underline{a}$ to denote payoffs on the +1 edges, and $\overline{b}$ to denote payoffs on the +2 edges; for a block, we use $\underline{a}\overline{b}\underline{c}\overline{d}\ldots$, meaning that, if it starts at position $z$, then $g_{\omega}(z, +1) = a, g_{\omega}(z, +2) = b, g_{\omega}(z + 1, +1) = c, g_{\omega}(z + 1, +2) = d$, etc.

		\begin{proposition}\label{prop:dimension1-game}
			For Example \ref{example:dimension1}, finite horizon $n$, $k \geq 0$ and $(c,d) \neq (1,0)$, the strategy
			\begin{eqnarray*}
				\sigma^*(\underline{0}\overline{1}) & = & +2, \\
				\sigma^*(\underline{1}\overline{0}) & = & +1, \\
				\sigma^*(\underline{a}\overline{a}\underline{1}) & = & +1, \\
				\sigma^*(\underline{a}\overline{a}\underline{0}(\overline{1}\underline{0})^k\overline{c}\underline{d}) & = & \begin{cases}
														\text{indifferent} & \text{if } n < k,\\
														+2 & \text{if } n \geq k \text{ and } k \text{ is odd}, \\
														+1 & \text{if } n \geq k \text{ and } k \text{ is even},
													\end{cases} 
			\end{eqnarray*}			
			is optimal in any finitely repeated decision process with $n$ stages.~Moreover, by letting $n$ go to $+\infty$, it is optimal for the $\lambda$-discounted decision process for $\lambda > 0$, and the uniform value is given by
			\begin{equation}\label{eq:limit_value_explicit}
				v_{\infty}(p) = \frac{p^6 - 2p^5 - 2p^4 + 6p^3 - 4p^2}{2p^5 - 7p^4 + 7p^3 - 2p^2 - p}.
			\end{equation}
		\end{proposition}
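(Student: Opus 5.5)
I would follow the scheme used for Propositions \ref{prop:gamma_1} and \ref{prop:gamma_2}: first show, by local exchange arguments, that $\sigma^*$ weakly dominates every strategy in the $n$-stage problem. Then I would compute the value through a dynamic programming system, using the discounted value together with \eqref{eq:limit_value_eq_discounted_value}.

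\textbf{Step 1 (dominance).} Fix $\omega$, $z$, $n$ and an arbitrary strategy $\sigma$ differing from $\sigma^*$ at $z$. I would treat the four patterns in turn.
\begin{itemize}
\item Pattern $\underline{0}\overline{1}$ with $\sigma(z)=+1$. Both branches reach $z+2$: either via $\underline{0}$ then some edge from $z+1$, or directly via $\overline{1}$. If $\sigma$ continues from $z+1$ with $+1$, replacing ``$+1,+1$'' by ``$+2$'' gives a payoff sequence $1,g_3,\dots$ against $0,g,g_3,\dots$, shifted by one stage, which weakly dominates. If $\sigma$ continues with $+2$ to $z+3$, I would compare with ``$+2$ then $+1$''. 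This case is the delicate one and may require looking one more edge ahead, in the same way as the case analysis of Proposition \ref{prop:gamma_2}.
\item Pattern $\underline{1}\overline{0}$ is symmetric.
\item Patterns with $g(z,+1)=g(z,+2)=a$: the first edge is payoff-neutral, so the decision reduces to choosing between the continuations from $z+1$ and from $z+2$. With $\underline{1}$ next, moving to $z+1$ secures a $1$ immediately.
\item The alternating string $\underline{0}(\overline{1}\underline{0})^k$: the token must thread a zig-zag corridor in which the only ones lie on $+2$ edges from odd offsets. The parity of $k$ then decides which entry point collects $\lceil k/2\rceil$ versus $\lfloor k/2\rfloor$ ones before the terminating pair $\overline{c}\underline{d}\neq\overline{1}\underline{0}$. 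When $n<k$ the horizon ends inside the corridor, and both entries give equal totals, which explains the indifference.
\end{itemize}
I would prove the corridor case by induction on $k$, comparing the two entries directly. In each case the modified strategy agrees with $\sigma$ after a bounded number of steps, so iterating the exchanges, as in Proposition \ref{prop:gamma_1}, yields optimality.

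\textbf{Step 2 (value).} Letting $n\to\infty$ makes $\sigma^*$ a stationary strategy optimal for every $\lambda$-discounted problem: the finite-horizon dominance passes to discounted sums by Abel summation. Define the conditional values
\begin{itemize}
\item $V=\E[v_\lambda(z)]$;
\item $V^{1}$, the same expectation conditioned on $g(z,+1)=1$;
\item $V^{0}$, the same expectation conditioned on $g(z,+1)=0$ together with the information inherited from the previous step.
\end{itemize}
Each stage of $\sigma^*$ reveals at most the edge $\underline{\cdot}$ at the next position, so I expect three variables to close the system. Concretely, I would condition on the two edges at $z$ and on the known edge at $z+1$, and write linear equations with coefficients in $p$ and $(1-\lambda)$. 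The corridor pattern contributes geometric series in $(p(1-p))^k$ split by parity, which sum in closed form.

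\textbf{Step 3 (limit).} Solve the $3\times3$ linear system, multiply by $\lambda$, and let $\lambda\to0$. By \eqref{eq:limit_value_eq_discounted_value} and dominated convergence, this gives $v_\infty(p)$. It should simplify to \eqref{eq:limit_value_explicit}; I would check this against $p\to1$ (value $1$) and small $p$. The main obstacle is Step 1 for the corridor pattern and the $\underline{0}\overline{1}$ subcase. There the exchange must be carried across an unbounded look-ahead, so a clean induction on $k$ with carefully matched payoff sequences is needed. The algebra of Step 3 is routine but heavy, and I would verify it with computer algebra.
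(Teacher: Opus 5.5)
Your architecture matches the paper's: exchange arguments showing $\sigma^*$ is optimal at every horizon, then a three-variable discounted dynamic-programming system, then $\lambda\to0$ through \eqref{eq:limit_value_eq_discounted_value}. Your Abel-summation transfer is a valid substitute for the paper's rerun of the exchange argument in discounted form, and a cleaner one. It needs a single horizon-free rule that is optimal for all $n$, since then every partial sum is maximal and so is every discounted sum. Two smaller points:
\begin{itemize}
\item No ``inherited information'' is needed in the DP. The paper conditions only on $g(z,+1)$, because $\sigma^*$ (treating the corridor as one block) always lands on a site of which at most the $+1$ edge is known.
\item The $\underline{0}\overline{1}$ subcase is not delicate. ``$+2,+1$'' collects $1+g(z+2,+1)\ge g(z+1,+2)$, which is what ``$+1,+2$'' collects, and both reach $z+3$ in two stages.
\end{itemize}

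The genuine gap is the corridor step. You leave it as ``compare the two entries directly'' and describe it incorrectly. In $\underline{a}\overline{a}\underline{0}(\overline{1}\underline{0})^k\overline{c}\underline{d}$, every $+2$ edge at $z+1,\dots,z+k$ pays $1$ and every $+1$ edge at $z+1,\dots,z+k+1$ pays $0$; there is no restriction to odd offsets. Jumping from $z+1$ collects $\lceil k/2\rceil$ ones, jumping from $z+2$ collects $\lfloor k/2\rfloor$, and mixing in $+1$ steps is dominated. The comparison is decided at the exit, using $(c,d)\neq(1,0)\Rightarrow d\ge c$ and the fact that one extra stage is worth at most $1$.
\begin{itemize}
\item For even $k$, the $z+2$ entry reaches $z+k+2$ when the other entry reaches $z+k+1$ with the same total, and $d\ge c$ favours it.
\item For odd $k$, the $z+1$ entry gains $1+d-c\ge1$ at the cost of one stage.
\end{itemize}
Carried out correctly, your comparison therefore yields $+2$ for even $k$ and $+1$ for odd $k$, which is the opposite of the displayed rule. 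Two-stage checks confirm this:
\begin{itemize}
\item For $k=0$, $(c,d)=(0,1)$, $n=2$: $+2$ earns $a+1$ and $+1$ earns $a$.
\item For $k=1$, $c=0$, $n=2$: $+1$ earns $a+1$ and $+2$ earns $a$.
\end{itemize}
The paper's own recursion agrees. The test $g(z+2,+1)\ge g(z+1,+2)$ selects $+2$ at $k=0$, and the test $g(z+2,+2)\le g(z+3,+1)$ selects $+1$ at $k=1$. Its DP system, where the corridor is always left at $z+k+2$ after $K(k)=1+\lfloor (k+1)/2\rfloor=1+\lceil k/2\rceil$ stages, also describes the corrected rule.

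Your indifference claim fails too, because each jump covers two sites. The two entries tie only when $n-1\le\lfloor k/2\rfloor$, not whenever $n<k$. For example, with $k=5$, $n=4$, $c=0$, entering at $z+1$ earns $a+3$ while entering at $z+2$ earns at most $a+2$.

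So you cannot prove the rule as displayed. You must state and prove the parity-corrected rule, which is optimal for every $n$ (with genuine ties at short horizons), and is exactly the horizon-free rule your Abel step requires. Then write the three-variable DP system for that rule.
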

			
		\begin{proof} Fix $\omega \in \Omega$. Let $n \geq 1$, $x := g_{\omega}(0, +1)$ and $y := g_{\omega}(0, +2)$. 

			First, if $x \neq y$, without loss of generality, let $x = 1$ and $y = 0$, and consider the following 2 strategies: 
			$\tau \colon (\tau_1 = +2 \text{ and then optimally})$ (i.e., take action +2, then play optimally thereafter) and $\sigma \colon (\sigma_1 = +1, \sigma_2 = +1, \sigma_m = \tau_{m - 1}, 3 \leq m \leq n)$ (take action +1, then action +1, and subsequently mirror $\tau$ shifted by one stage).	
			Since $g_{\omega}(1, +1)\geq y$ and every per-stage payoff is bounded by $x$, we have $\gamma_n(0, \sigma) \geq \gamma_n(0, \tau)$. Thus, it is optimal to choose action +1.
			
			Second, if $x = y$, let $a := x = y$ and $b := g_{\omega}(1, +1)$. This case splits based on the value of $b$. If $b = 1$, the analysis mirrors that of the previous case, and the optimal first move is $+1$.~If instead $b = 0$, the optimal choice is less immediate.~One thing is certain: starting with +2 is at least as good as starting with two actions $+1$.~Thus, we compare $\tau \colon (\text{+1, +2, then optimally})$ against $\sigma \colon (\text{+2, then optimally})$.
			If $g_{\omega}(2, +1) \geq g_{\omega}(1, +2)$, then choosing $+2$ is optimal since
			\[
				\gamma_n(0, \sigma) = \frac{a + g_{\omega}(2, +1) + (n-2)v_{n-2}(3)}{n} \geq \frac{a + g_{\omega}(1, +2) + (n-2)v_{n-2}(3)}{n} = \gamma_n(0, \tau).
			\]  
			Conversely, if $g_{\omega}(2, +1) = 0$ and $g_{\omega}(1, +2) = 1$, we compare $\tau$ against $\sigma \colon (\text{+2, +2, then optimally})$. If $g_{\omega}(2, +2) \leq g_{\omega}(3, +1)$, it is optimal to start with $+1$; otherwise, the recursion continues until the threshold rule is reached (compared edges are not $0$ and $1$, respectively).~If the recursive comparison exceeds $n$, e.g., $n = 3 < k$, then the decision process necessarily looks like in Figure \ref{figure:kexceedsn} and thus, the player may play any first action and then follow payoff-1 edges (e.g., the red path in the figure).

			\begin{figure}[!ht]
				\centering
				\begin{tikzpicture}[node distance=1.5cm]
					% Nodes
					\foreach \x in {0,1,2,3,4,5,6} {
						\node[circle, fill=black, inner sep=1.5pt] (n\x) at (\x*1.2, 0) {};
						% \node[below, 0.5pt] at (n\x) {$z+\x$};
					}
					\node at (0, 0) {$z$};
					\node at (7.6, 0) {$\dots$};

					\draw[->] (n0) to (n1);

					% Zigzag line
					\draw[thick, decoration={zigzag, segment length=4pt, amplitude=1pt}, decorate] (n1) -- (n6);

					% Arcs above
					\draw[->, red] (n0) to[bend left=25] node[above, blue] {$a$} (n2);
					\draw[->, red] (n2) to[bend left=25] node[above, blue] {$1$} (n4);
					\draw[->, red] (n4) to[bend left=25] node[above, blue] {$1$} (n6);
					
					% Arcs below
					\draw[->] (n1) to[bend right=25] node[below, blue] {$1$} (n3);
					\draw[->] (n3) to[bend right=25] node[below, blue] {$1$} (n5);
			
					\node[blue] at (0.6, -0.2) {$a$};
				\end{tikzpicture}
				\caption{Edges with payoff 0 are drawn as zigzags; all other payoffs are shown in blue.}
				\label{figure:kexceedsn}
			\end{figure}
 
			Let $\lambda \in (0, 1]$.~The proof that the strategy $\sigma^*$ with $n = +\infty$ is optimal for the (infinite) $\lambda$-discounted decision process is based on the same line of reasoning. We shall now prove this briefly.
			
			If $x \neq y$, w.l.o.g. let $x = 1$ and $y = 0$. Consider $\tau \colon (+2, \text{then optimally})$ and $\sigma \colon (+1, +1, \text{then mirror } \tau)$. Since $g_{\omega}(1, +1) \geq y$ and payoffs are bounded by $x$, $v_{\lambda}(0, \sigma) \geq v_{\lambda}(0, \tau)$.~Hence, the optimal first move is $+1$.
			
			If $x = y$, let $a := x$ and $b := g_\omega(1, +1)$. If $b = 1$ the analysis mirrors the previous case, and the optimal first move is $+1$. If $b = 0$, starting with $+1, +1$ is not optimal since $\lambda(1-\lambda)^2 v_\lambda(2) \leq \lambda(1-\lambda) v_\lambda(2)$ for all $\lambda \in (0, 1]$.~Compare $\tau \colon (+1, +2, \text{then optimally})$ against $\sigma \colon (+2, \text{then optimally})$. If $g_{\omega}(2, +1) \geq g_{\omega}(1, +2)$, then $v_{\lambda}(0, \sigma) \geq \lambda a + \lambda(1-\lambda)g_{\omega}(2, +1) + \lambda(1-\lambda)^2 v_{\lambda}(3) \geq v_{\lambda}(0, \tau)$ and thus choosing $+2$ is optimal.~If $g_{\omega}(2, +1) = 0$ and $g_{\omega}(1, +2) = 1$, compare $\tau$ against $\sigma' \colon (+2, +2, \text{then optimally})$.~If $g_{\omega}(2, +2) \leq g_{\omega}(3, +1)$, start with $+1$; otherwise, the recursion continues.
			
			Denote now $w_{\lambda} := \E v_{\lambda}$ and $w_{\lambda}^x := \E[v_{\lambda} \mid g(0, +1) = x]$.~For $k \geq 0$, let $q := p(1 - p)$, $a_k := q^k(1 - q)$, 
			\[
				K(k) = 1 + \left \lfloor \frac{k+1}{2} \right \rfloor, G_{K, j}(\lambda) = \lambda\sum_{t=j}^{K}(1-\lambda)^t \text{ for } j \in \{1, 2\} \text{ and } \ F(w_{\lambda}^0, w_{\lambda}^1) = \frac{p w_{\lambda}^1 + (1 - p)^2 w_{\lambda}^0}{1 - q}.
			\]
			Since $\sigma^*$ is optimal, by the dynamic programming principle the following system of equations is deduced 
			\begin{align*}
				w_{\lambda} & = p w_{\lambda}^1 + (1-p) w_{\lambda}^0, \\
				w_{\lambda}^1 & = (1 - p)(\lambda + (1 - \lambda) w_{\lambda}^1) + p^2 (\lambda + (1 - \lambda) w_{\lambda}^1) + q \sum_{k = 0}^{\infty} a_k \left( G_{k, 1} + (1 - \lambda)^{K(k)}F(w_{\lambda}^0, w_{\lambda}^1)\right), \\
				w_{\lambda}^0 & = p(\lambda + w_{\lambda}) + q w_{\lambda}^1 + (1 - p)^2 \sum_{k=0}^{\infty} a_k \left(G_{k, 2} + (1 - \lambda)^{K(k)}F(w_{\lambda}^0, w_{\lambda}^1)\right).
			\end{align*}
			Letting $\lambda \to 0$ and solving the three-variable system yields the expression for $\lim_{\lambda \to 0} w_{\lambda}$. By the Dominated Convergence Theorem and \eqref{eq:limit_value_eq_discounted_value}, equation \eqref{eq:limit_value_explicit} follows.

		\end{proof}

		\begin{remark}[Dichotomy between discrete and continuous payoffs]\label{remark:dichotomy} For this example, the binary nature of the payoffs permits a direct decision rule when immediate payoffs do not tie.~By contrast, when payoffs follow a continuous distribution, ties occur with probability zero, so the immediate comparison $g(z, +1) \neq g(z, +2)$ almost surely determines the locally preferable action.~Nevertheless, selecting the action with the larger immediate payoff need not be optimal globally. Even when $y := g(z, +2) > g(z, +1) =: x$, the difference $y - x$ may be arbitrarily small, while the continuation values may satisfy $v_{n}(z+1) \gg v_{n}(z+2)$.
		\end{remark}
	
	\section{Dimensional lifting and oriented percolation}\label{section:dimensional-lifting-oriented-percolation}

		Let $m \ge 1$. Consider a one-dimensional PMDP with action set $I = \{1,m\}$ and payoff distribution $F$, denoted by $\Gamma^{1,m}_F$. Here, we study the behavior of its limit value as $m \to \infty$. Intuitively, when $m$ is large, choosing action $m$ moves the PMDP to a distant, approximately independent region of the environment. In other words, this action can be viewed as switching to a separate parallel copy of the integer line, which naturally leads to the following PMDP.

		\begin{example}[denoted $\Gamma^{(2)}_F$]\label{example:general-oriented-percolation}
		A PMDP with action set $I = \{e_1,e_2\}$ and payoff distribution $F$.
		\end{example}

		More precisely, in Theorem \ref{theorem:convergence_game_m}, we prove that the sequence of limit values of the one-dimensional PMDPs $\Gamma^{1,m}_F$ converges, as $m \to \infty$, to the limit value of $\Gamma^{(2)}_F$. We refer to this phenomenon as the \emph{dimensional lifting property}. We also discuss possible extensions of this property.
        
        We then focus on Example \ref{example:oriented-percolation-on-Z2}, which is Example \ref{example:general-oriented-percolation} with $F = \text{Bernoulli}(p)$. In Theorem \ref{prop:characterization_critical_proba} we characterize the critical parameter of oriented Bernoulli percolation with the uniform value of Example \ref{example:oriented-percolation-on-Z2}. As an application, we use the dimensional lifting property to approximate this critical parameter. We present numerical simulations illustrating the phase transition obtained by varying the step size $m$ in the sequence $\Gamma^{1,m}$ with Bernoulli$(p)$ payoffs.~Finally, we compare three related examples.

		\subsection{Dimensional lifting}
		\begin{theorem}\label{theorem:convergence_game_m}
			The sequence of limit values $\{v_{\infty}^{1,m}\}_m$ of the PMDPs $(\Gamma^{1, m}_F)_m$ converges as $m \to \infty$ to the limit value $v_{\infty}^{(2)}$ of the PMDP $\Gamma^{(2)}_F$. 
			% It actually holds that
			% \[
			% 	\lim_{n \to \infty} \lim_{m \to \infty} v_{n}^{1,m} = \lim_{m \to \infty}\lim_{n \to \infty} v_{n}^{1, m} = v_{\infty}^{(2)}.
			% \]
		\end{theorem}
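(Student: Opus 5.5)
The plan is to prove separately that $\limsup_m v_\infty^{1,m}\le v_\infty^{(2)}$ and $\liminf_m v_\infty^{1,m}\ge v_\infty^{(2)}$. Both PMDPs are oriented, so Theorem~\ref{theorem:iid_and_oriented_percolation_games} applies to each of them. It gives deterministic limit values and, more importantly, the bound $|v_\infty - \E v_n|\le A\ln(n+1)n^{-1/2}$. The constant $A$ depends on $\|I\|_\infty$, which for $I=\{1,m\}$ grows with $m$. So before anything else I will check whether this dependence can be removed. One option is to use directly the Garnier--Ziliotto superadditivity/concentration argument, in which $n$ is the number of stages and the bound comes from McDiarmid with $\sim n$ relevant edges per path. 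If that fails, I will instead use the cruder facts that $n\E v_n$ is superadditive, so $v_\infty\ge \E v_n$ for every $n$, and that the payoffs are bounded.

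\textbf{Key coupling for fixed $n$.} I will show that for each fixed horizon $n$, $\E v_n^{1,m}\to\E v_n^{(2)}$ as $m\to\infty$. In $n$ stages, a path in $\Gamma^{1,m}$ starting at $0$ with $j$ moves of $m$ and $i$ moves of $1$ visits positions $i'+j'm$, with $i'\le i$ and $j'\le j$. As soon as $m>n$, the map $(i',j')\mapsto i'+j'm$ on $\{i'+j'\le n\}$ is injective. The edges used by $n$-step paths in $\Gamma^{1,m}$ are therefore in bijection with those used by $n$-step paths in $\Gamma^{(2)}$ from the origin (action $1\leftrightarrow e_1$, action $m\leftrightarrow e_2$). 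Since the payoffs are i.i.d., $v_n^{1,m}(0)$ and $v_n^{(2)}(0)$ have exactly the same law for $m>n$. This is the heart of the dimensional lifting.

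\textbf{Two inequalities.} For the lower bound, superadditivity gives $v_\infty^{1,m}\ge \E v_n^{1,m}=\E v_n^{(2)}$ whenever $m>n$. Letting $m\to\infty$ and then $n\to\infty$ yields $\liminf_m v_\infty^{1,m}\ge v_\infty^{(2)}$. The superadditivity holds because one can concatenate an optimal $n$-block with an optimal $n'$-block and use stationarity of the law in the starting point. The upper bound is the main obstacle, because $v_\infty^{1,m}=\lim_n \E v_n^{1,m}$ involves horizons $n\gg m$, where paths wrap around and reuse edges, so the coupling above breaks down. I would try a block decomposition. Cut a long path in $\Gamma^{1,m}$ into blocks of length $n\ll m$. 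Each block's payoff is at most $n\,v_n^{1,m}(z_{\text{block start}})$, so $\gamma_N\le \max$ over block starting points of the average of $v_n^{1,m}(z)$. The starting points lie in a polynomially large reachable set. I would then use concentration for $v_n^{1,m}(z)$, which has the same law as $v_n^{(2)}$ and concentrates at scale $\exp(-Bn\lambda^2)$ with a constant independent of $m$, and take a union bound over the reachable starting points as in Lemma~\ref{lemma:equicontinuity}. This should give, almost surely, $\limsup_N v_N^{1,m}\le \E v_n^{(2)}+\lambda+$ an error that is small uniformly in $m>n$. The delicate point is that the union bound must beat the growth of the reachable set $\mathcal{R}_{N}$. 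Here the start points of consecutive blocks are $n$ apart, so I would restrict to horizons $N$ that are polynomial in $n$ and combine this with the fact that $v_\infty^{1,m}=\lim_N\E v_N^{1,m}$. Letting $n\to\infty$ then gives $\limsup_m v_\infty^{1,m}\le v_\infty^{(2)}$, which completes the proof.
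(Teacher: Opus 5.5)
Your coupling step is the same as the paper's: for $m>n$ the edges explored by $n$-step paths of $\Gamma^{1,m}$ and of $\Gamma^{(2)}$ are in bijection, so $v_n^{1,m}$ and $v_n^{(2)}$ have the same law. The paper then finishes in one line. It applies the rate $|\E v_n-v_\infty|\le A\ln(\cdot)\,n^{-1/2}$ of Theorem~\ref{theorem:iid_and_oriented_percolation_games} to both PMDPs with $m=n+1$, and writes the error for $\Gamma^{1,n+1}$ as $A\ln(n(n+1)+1)n^{-1/2}$, so that $\|I\|_\infty=n+1$ enters only through the logarithm. You correctly spotted that this $m$-uniformity is the crux. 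Your fallback does not avoid it, though: both of your inequalities implicitly need it.

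\textbf{Lower bound.} Exact superadditivity of $n\E v_n^{1,m}$ does not follow from stationarity. The endpoint $z'$ of the optimal first block depends on the environment, and paths of $\Gamma^{1,m}$ wrap around.
Already for $n=2$, the first block's choice depends on $g(m,1)$ and $g(m,m)$, and the state $m$ is reachable from $z'=2$ by $m-2$ moves $+1$. So on the event $\{z'=2\}$ the environment ahead of $z'$ is biased, and once $n'\ge m-1$ you can no longer use $\E v_{n'}(z')=\E v_{n'}(\mathbf{0})$. This identity is exact for $\Gamma^{(2)}$ thanks to its level structure, but $\{1,m\}$ lacks the constant path length property.
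Only an approximate version is available, via $v_{n'}(z')\ge\min_{w\in\mathcal{R}_n}v_{n'}(w)$ and a union bound. That version needs concentration of $v_{n'}^{1,m}$ at every scale $n'$, with constants uniform in $m$. This includes $n'\ge m$, where the law of $v_{n'}^{1,m}$ is no longer that of $v_{n'}^{(2)}$.

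\textbf{Upper bound.} Your block argument controls $\E v_N^{1,m}$ only for $N$ polynomial in some $n<m$. Once $m$ is fixed, that means only bounded $N$. If instead $n$ is fixed and $N\to\infty$, the maximum of $v_n$ over the reachable starting points tends to $\operatorname*{ess\,sup} v_n$, and the bound is lost.
Passing to $v_\infty^{1,m}=\lim_N\E v_N^{1,m}$ therefore requires $|v_\infty^{1,m}-\E v_N^{1,m}|\le\varepsilon(N)$ uniformly in $m$. That is exactly the estimate you set out to avoid.

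\textbf{How to close the proof.} Use your ``first option'': check that Garnier--Ziliotto's constants are uniform in $m$ for $I=\{1,m\}$. The concentration constant $B=(8\|g\|_\infty^2)^{-1}$ is already independent of $I$, and here $|\mathcal{R}_n|\le n+1$. Then conclude as the paper does.
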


		\begin{proof}
			Fix $F$, which we omit for the sake of readability.~Let us construct a coupling between $\Gamma^{1, m}$ and $\Gamma^{(2)}$ over the same probability space on which the payoffs are defined.~Let $\phi_m \colon \Z \to \{0, 1, \dots, m-1\} \times \Z$ be defined by $\phi_m(z) = \left(z \bmod m, \left\lfloor \frac{z}{m} \right\rfloor\right)$.~For the coupling to be well-defined, we require $m > n$. This ensures that within $n$ stages, there are no additional identifications of distinct two-dimensional states within the relevant finite-horizon region. Under this condition, the finite reachable subgraph in $\Z$ is isomorphic to the finite reachable subgraph in $\Z^2$. 

			\begin{figure}[!ht]
				\centering
				\begin{tikzpicture}[scale=0.8, every node/.style={transform shape}]

					\draw[gray!30] (0,1) grid (3,3);
					\draw[->, thick, blue] (0,1) -- (1,1);
					\filldraw[black] (1,1) circle (2pt) node[below right] {$\phi_m(z) + e_1$};
					\draw[->, thick, red] (0,1) -- (0,2) node[midway, left] {$g(\phi_m(z), e_2)$};
					\filldraw[black] (0,2) circle (2pt) node[above left] {$\phi_m(z) + e_2$};
					\filldraw[black] (0,1) circle (2pt) node[below left] {$\phi_m(z)$};

					\draw[<->, dashed, thick, black] (1.5, 0.1) -- (1.5, -0.8) node[midway, right] {$\;$ Equivalent $n$-stage decision process under coupling with $m > n$.};

					\draw[thick, ->, gray!30] (0,-2.3) -- (3,-2.3) node[right] {$\dots$};
					\filldraw[black] (0,-2.3) circle (2pt) node[below left] {$z$};
					\draw[->, thick, blue] (0,-2.3) -- (1,-2.3);
					% node[midway, below] {$g(z, 1)$};
					\filldraw[black] (1,-2.3) circle (2pt) node[below right] {$z + 1$};
					
					\draw[thick, ->, gray!30] (0,-1.1) -- (3,-1.1) node[right] {$\dots$};
					\filldraw[black] (0,-1.1) circle (2pt) node[above left] {$z + m$};
					
					\draw[->, thick, red] (0,-2.3) .. controls (-0.4,-1.7) .. (0,-1.1) node[midway, left] {$g(z, m)$};

				\end{tikzpicture}
				\caption{A representation of the coupling between $\Gamma^{1,m}$ and $\Gamma^{(2)}$.~After rescaling the vertical coordinates in $\Z^2$ by $m$, the two allowed moves become like two directions in a two-dimensional oriented lattice.}
			\end{figure}

			We couple the underlying random variables as follows: for each $z \in \Z$, the payoff on the edge $\{z, z+1\}$ is identical to that on $\{\phi_m(z), \phi_m(z) + e_1\}$ in $\Z^2$, and the payoff on $\{z, z+m\}$ in $\Z$ is identical to that on $\{\phi_m(z), \phi_m(z) + e_2\}$ in $\Z^2$.~We then assign i.i.d. payoffs to the remaining edges on the square lattice.
			
			Consequently, under the coupling we have:
			\begin{equation}\label{coupling equality}
				v_{n}^{1,m}(z) = v_n^{(2)}(\phi_m(z)), \text{ for all } z \in \Z \text{ and } n < m.
			\end{equation}

			In particular, $\E(v_{n}^{1, n+1}) = \E(v_n^{(2)})$ for all $n \geq 1$. From Theorem \ref{theorem:iid_and_oriented_percolation_games}, we have the estimates 
			\begin{equation*}
				|\E(v_{n}^{1,n+1}) - v_{\infty}^{1, n+1}| \leq A \ln(n(n+1) + 1)n^{-1/2} \text{ and }
				|\E(v_n^{(2)}) - v_{\infty}^{(2)}| \leq A \ln(n + 1)n^{-1/2}.		
			\end{equation*}
			Consequently, for all $\varepsilon > 0$, there exists $N(\varepsilon)$ such that for all $n \geq N(\varepsilon)$,
			\[
				\left|v_{\infty}^{1, n+1} - v_{\infty}^{(2)}\right| \leq \left|v_{\infty}^{1, n+1} - \E(v_{n}^{1,n+1})\right| + \left|\E(v_n^{(2)}) - v_{\infty}^{(2)}\right| < \varepsilon.
			\]
			% Finally, the limits can be swapped since for fixed $n$, $v_{n}^{1,m}$ is non-decreasing in $m > n$. This is because having a wider strip contains all paths of narrower strips. Therefore, for fixed $n$, 
			% \[
			% 	v_{n}^{1, m} \leq v_{n}^{1,m'}, \ \text{for } n < m < m'.
			% \]
			% Moreover, by the coupling argument we have that, for all $m > n$, $v_{n}^{1,m} = v_{n}^{(2)}$. Since the sequence $(v_{n}^[1,m])_{m \geq 1}$ is bounded by $nM$, passing to the limit as $m \to \infty$ and then as $n \to \infty$ yields the claim.

		\end{proof}

		\begin{remark}[More general formulations of the dimensional lifting property]
			The established convergence phenomenon can be generalized in several ways. It can be reformulated for the vertex-based payoff setting. Furthermore, let $v_{\infty}^I$ denote the limit value of a PMDP with action set $I \subset \Z^d$. Consider a sequence of PMDPs with action sets $I_m = I \cup \{m \cdot \boldsymbol{1}_d\}$. Then, a similar coupling argument yields
			\[
				v_{\infty}^{I_m} \xrightarrow[m \to \infty]{} v_{\infty}^{I'}, \text{ where } I' = \{(j, 0) : j \in I\} \cup \{e_{d+1}\} \subset \Z^{d+1}.
			\]
			Specifically, with $I = \{a_1, a_2\} \subset \Z^2$, introducing a third action of magnitude $m$ and letting $m \to \infty$ yields convergence to the limit value of a corresponding decision process in $\Z^3$.~It can also be extended by considering the sequence of action sets $\{(1, m, m^2)\}_{m \ge 1}$. Letting $m \to \infty$ then yields a three-dimensional PMDP. 

		\end{remark}
			
		\subsection{Characterization of the critical threshold}\label{subsection:critical_probability_threshold}

        Recall Example \ref{example:oriented-percolation-on-Z2}.

		\begin{theorem}\label{prop:characterization_critical_proba}
			The limit value of the PMDP given in Example \ref{example:oriented-percolation-on-Z2} satisfies
			\[
				v_{\infty}(p) = 1, \; \Pbb\text{-a.s} \quad \text{if and only if} \quad p \ge \vec p_{bc}.
			\]
		\end{theorem}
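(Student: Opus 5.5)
We prove the two implications separately, using that $v_\infty(p)$ is deterministic and independent of the starting point (Theorem~\ref{theorem:iid_and_oriented_percolation_games}) and nondecreasing in $p$ (the monotone coupling used in Proposition~\ref{prop:bernoulli_continuity}).

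\emph{Direction $p \ge \vec p_{bc} \Rightarrow v_\infty(p)=1$.} For $p>\vec p_{bc}$ the argument of Example~\ref{example:oriented-percolation-on-Z2} applies. Almost surely an infinite oriented open path exists from some vertex $x\in\N^2$. The token reaches $x$ in $|x|_1$ steps and then follows the path forever, so $v_n(\mathbf 0)\ge (n-|x|_1)/n\to 1$. Combined with $v_n\le 1$ this gives $v_\infty(p)=1$.

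The case $p=\vec p_{bc}$ does not follow from this argument, since at criticality there is a.s.\ no infinite path. Instead we use continuity: Proposition~\ref{prop:bernoulli_continuity} shows $v_\infty$ is Lipschitz on $[p_0,1]$ for any $p_0>0$, and $\vec p_{bc}>0$. Hence $v_\infty(\vec p_{bc})=\lim_{p\downarrow \vec p_{bc}} v_\infty(p)=1$. This avoids any statement about percolation at criticality.

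\emph{Direction $p<\vec p_{bc}\Rightarrow v_\infty(p)<1$.} This is the main obstacle. A strictly positive density of closed edges along every path of length $n$ must be produced, not merely the absence of an infinite open path. The plan is to use exponential decay in the subcritical phase of oriented percolation (Aizenman--Barsky / Menshikov type results, valid for oriented bond percolation; see \cite{Durrett1984Oriented} and the sharpness results cited in \cite{DuminilCopin2018}). It gives $\Pbb_p(\text{an open oriented path of length } \ell \text{ starts at } x)\le e^{-c\ell}$ for some $c=c(p)>0$.

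The argument proceeds in three steps.

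\begin{enumerate}
\item Fix a large block length $\ell$ and cut any length-$n$ path into $n/\ell$ consecutive segments of length $\ell$. A path with average payoff $\ge 1-\delta$ must contain at least $(1-2\delta\ell)n/\ell$ fully open segments, each starting at a point of $\mathcal R_n$.
\item Estimate by a union bound over paths instead of over starting points. A path of length $n$ is determined by $2^n$ choices, and the event that a fixed path has at most $\delta n$ closed edges has probability at most $\binom{n}{\delta n}p^{(1-\delta)n}$. This is a simpler route: the expected number of paths with at most $\delta n$ closed edges is at most $2^n\binom{n}{\delta n}p^{(1-\delta)n}$, which decays exponentially when $2p^{1-\delta}e^{H(\delta)}<1$. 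That only handles $p<1/2$.
\item Therefore, for general $p<\vec p_{bc}$, renormalize. Declare a block site good if an open path of length $\ell$ starts inside the corresponding $\ell\times\ell$ box. By exponential decay, good blocks form a $1$-dependent field with density $\varepsilon(\ell)\to 0$. A path with payoff average $\ge 1-\delta$ must visit $\gtrsim n/\ell$ good blocks in sequence along an oriented block path. Apply the same union-bound counting to these block paths, with $2^{n/\ell}$ block paths and small density $\varepsilon(\ell)$, using Liggett--Schonmann--Stacey domination to handle the $1$-dependence.
\end{enumerate}

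This yields $\Pbb(v_n\ge 1-\delta)\le e^{-c'n}$, hence $\E v_n\le 1-\delta+o(1)$ and $v_\infty(p)\le 1-\delta<1$.
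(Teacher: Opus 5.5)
Your treatment of $p>\vec p_{bc}$ and of $p=\vec p_{bc}$ (Lipschitz continuity from Proposition~\ref{prop:bernoulli_continuity} plus $\vec p_{bc}>0$) coincides with the paper's. For $p<\vec p_{bc}$ your route is genuinely different, and it is correct.

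The paper only invokes Claim~5.2 of \cite{sepulveda2024gameorientedpercolation}, as it paraphrases it: a.s.\ the supremum over semi-infinite oriented paths of the asymptotic average payoff is $<1$. Since this is a statement about infinite paths, concluding $v_\infty<1$ relies on $v_\infty$ being realized along a single infinite play, i.e.\ on the $0$-optimal strategy of Theorem~\ref{theorem:main-uniform}.

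You instead prove a finite-horizon bound directly, from subcritical exponential decay for oriented percolation (a known sharpness result), a block renormalization, and a union bound over block paths. The argument goes through as follows:
\begin{itemize}
\item Once $\delta\ell\le 1/2$, a path with at most $\delta n$ closed edges contains at least $n/(2\ell)$ fully open $\ell$-segments.
\item Each such segment makes the block containing its starting point good.
\item The $\ell_1$-norm increases by one per step, so at most two segment starts lie in a given $\ell\times\ell$ block. Hence the block path, which has length $N\approx n/\ell$ and at most $2^N$ choices, has at least $N/4$ good blocks.
\item The good field has finite-range dependence: good events of blocks at $\ell_\infty$-distance $\ge 2$ depend on disjoint edge sets. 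Its density is at most $\ell^2e^{-c\ell}$.
\item Either Liggett--Schonmann--Stacey domination, or extracting an independent subfamily of about $N/12$ good blocks with index gaps $\ge 3$, then makes the union bound exponentially small once $\ell$ is large.
\end{itemize}

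Your route buys an explicit deterministic $\delta(p)>0$ with $\Pbb(v_n\ge 1-\delta)\le e^{-c'n}$, hence $v_\infty(p)\le 1-\delta$. It needs neither the uniform-value theorem nor any statement about infinite paths. The cost is importing the sharpness theorem and carrying out by hand what amounts to a proof of the cited claim.

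When writing it up, state the ``at most two segment starts per block'' observation explicitly, since it is what gives a constant fraction of good blocks. You can drop step~2, which is only valid for $p<1/2$, and the irrelevant mention of $\mathcal R_n$ in step~1.
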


		\begin{proof}
			As already noted in the description of the example, for $p > \vec p_{bc}$, the uniform value is equal to 1 with probability 1.~At criticality ($p = \vec p_{bc}$), Proposition \ref{prop:bernoulli_continuity} implies that $v_{\vec p_{bc}} = 1$ as well.~In the subcritical regime ($p < \vec p_{bc}$), Claim 5.2 in \cite{sepulveda2024gameorientedpercolation} implies that, almost surely, the supremum over all semi-infinite oriented paths of the asymptotic average payoff is strictly less than 1. Therefore, the limit value is strictly less than 1.
			
		\end{proof}

		This result is closely related in spirit to \cite[Theorem 5.1]{sepulveda2024gameorientedpercolation}. There, the authors characterize the same critical parameter using a two-player percolation game.

		\subsection{Simulations}\label{subsection:simulations}
		
		\begin{figure}[!h]
			\centering
			\includegraphics[width=0.5\linewidth]{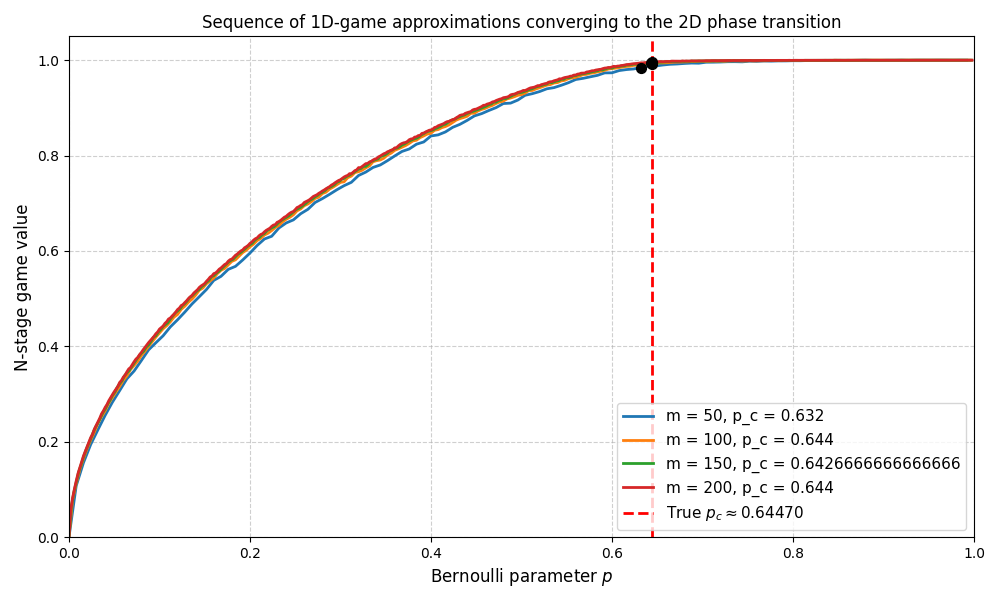}
			\caption{For increasing values of the parameter $m$, the value of the one-dimensional PMDPs with finite horizon $N=m-1$, action set $\{1,m\}$, and Bernoulli$(p)$ payoffs is computed over a range of equally spaced values of $p$ using Monte Carlo simulations and dynamic programming. To extract the approximate critical threshold from the curves, we identify the first point that comes within a small tolerance $\varepsilon$ of its maximum possible value, i.e., $v_N^{1,m}(p) \geq 1 - \varepsilon$, and for which all subsequent points also satisfy this condition.}
			\label{figure:simulations1}
		\end{figure}	
        
		From Theorem \ref{theorem:convergence_game_m} and Theorem \ref{prop:characterization_critical_proba}, there is a sequence of one-dimensional PMDPs that approximate a two-dimensional PMDP, whose value function exhibits a phase transition at the critical threshold of oriented Bernoulli percolation.~Available estimates for this threshold \cite[Section 4.1]{Jensen1999} indicate that $\vec p_{bc} \approx 0.644700185$.
        
		Figure \ref{figure:simulations1} illustrates the finite-horizon value curves for an increasing sequence of these one-dimensional PMDPs. We use these curves to track their approximate phase transition and compare it with the estimate of $\vec p_{bc}$ given in \cite[Section 4.1]{Jensen1999}.~The resulting value curves display a sharp threshold behavior as $m$ grows.

		\begin{figure}[!ht]
			\centering
			\includegraphics[width=0.5\linewidth]{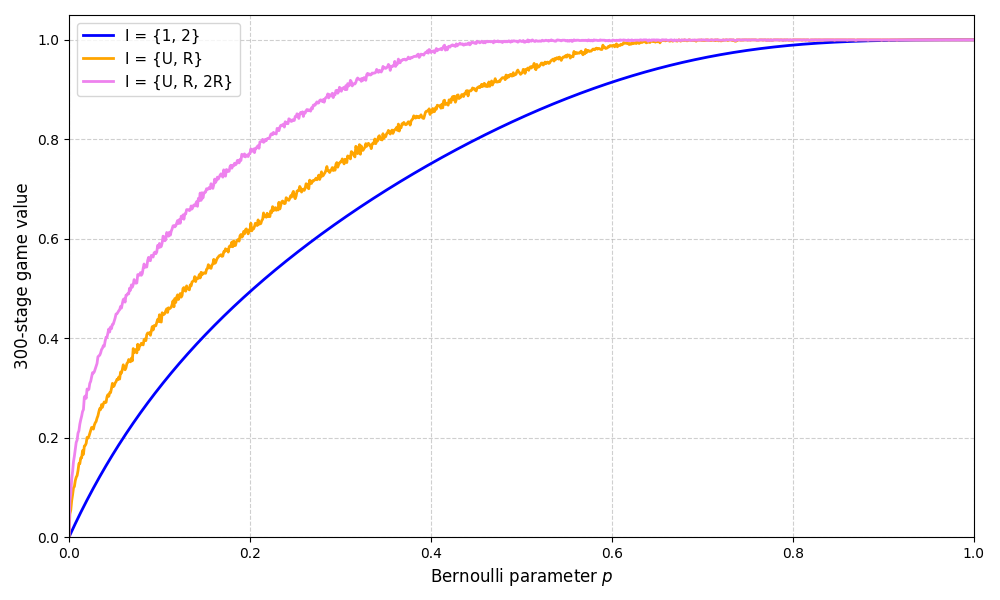}
			\caption{Comparison between \eqref{eq:limit_value_explicit} (blue) and simulations for Examples \ref{example:oriented-percolation-on-Z2} (orange) and \ref{example:extended_moves} (violet).}
			\label{figure:simulations2}
		\end{figure}

		In Figure~\ref{figure:simulations2}, we compare the example related to oriented percolation with Example \ref{example:extended_moves}, which is identical except that it incorporates a dependent action. It appears that Example~\ref{example:extended_moves}, like the oriented-percolation example, exhibits a phase transition at which the limit value becomes equal to $1$. We expect this transition to coincide with the phase transition of a corresponding percolation process. We conjecture that this process is the one on the square lattice whose admissible paths have step set $\{(0, 1), (1, 0), (2, 0)\}$.
		
		Furthermore, we also plot the explicit value curve for Example \ref{example:dimension1} to compare the second element of the sequence $(v_{\infty}^{1,m})_{m\geq1}$ with an approximate estimate of its limit $v_{\infty}^{(2)}$.~We conjecture that $v_{\infty}^{1, m} \leq v_{\infty}^{1, m+1}$, although proving this inequality appears to be surprisingly difficult.

	\section{Connection with directed last-passage percolation models}\label{section:lpp}
		Recall Example \ref{example:lpp}. Consider a collection $\{c(e)\}_{e \in \mathbb{L}^d}$ of i.i.d.\ uniformly bounded random weights. We work in the canonical setting, where $\mathbb{L}^d$ is the set of directed nearest-neighbor edges of $\mathbb{Z}^d$, meaning their steps are in the canonical step set $I_C := \{e_1,\ldots,e_d\}$.~For a path $\chi$ in $\mathbb{Z}^d$ with edges in $\mathbb{L}^d$, let $|\chi|$ denote the number of edges in $\chi$, and let $\chi_0$ denote its starting point.~A central quantity in LPP is the \emph{passage time}, defined by
		\[
			l(\chi) = \sum_{e \in \chi} c(e).
		\]
		
		Throughout this section, we consider LPP models in the canonical setting, which is the most extensively studied case.~Although LPP models are typically formulated with nonnegative weights, we assume only that the weights are uniformly bounded, in accordance with the assumptions in the definition of PMDPs. If the weights are bounded below but not necessarily nonnegative, one can add a sufficiently large deterministic constant to all weights to obtain nonnegative weights.

		We present two existing formulations in the canonical setting: the standard one, in which directed paths maximize their accumulated weight between two fixed points, and an alternative one, in which paths start from a fixed point and maximize their accumulated weight over endpoints lying on a given line. For oriented PMDPs with an independent set of actions, the first formulation provides insight into the transverse asymptotic behavior of the play, while the second yields the uniform value.~For interested readers, we also briefly review the related LPP literature. Since neither formulation fully captures oriented PMDPs, we introduce a more natural quantity and characterize the uniform value of oriented PMDPs in terms of it.

		\textbf{LPP model with a fixed endpoint.} In standard last-passage percolation, the main quantity of interest is the \emph{last-passage time}, defined by
		\[
			G_{\mathbf{0}, x} := \max_{\chi \colon \mathbf{0} \to x} l(\chi), \; \text{for every } x \in \mathbb{Z}_+^d,
		\]
		where the maximum is taken over all directed nearest-neighbor paths from the origin $\mathbf{0} = (0, \ldots, 0)$ to $x$. Under standard assumptions on the weights, satisfied in particular by i.i.d.\ uniformly bounded weights (after a shift if necessary), there exists a deterministic \emph{shape function} $\mu \colon \mathbb{Q}_{+}^d\to\mathbb{R}$ such that
		\[
			\lim_{n\to\infty} n^{-1}G_{\mathbf{0},\lfloor nx\rfloor}=\mu(x), \; \text{for every } x \in \mathbb{Q}_+^d,
		\]
		almost surely and in $L^1$ \cite[Propositions~2.1 and~2.2]{Martin2004LimitingShape}. Here $\lfloor nx\rfloor$ is taken coordinatewise.

		For oriented PMDPs with a linearly independent set of actions, the following relation holds. For the canonical action set it implies that under optimal play, the token asymptotically moves in the diagonal direction.

		\begin{proposition}\label{prop:lpp-oritend-games}
			For every oriented PMDP in dimension $d \geq 1$ with a set $I$ of $d$ linearly independent actions, the uniform value is
			\[
				v_{\infty} = \frac{\mu(\mathbf{1}_{d})}{d}.
			\]
		\end{proposition}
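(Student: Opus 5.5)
First, I reduce to the canonical action set. Since $I=\{a_1,\dots,a_d\}$ is linearly independent, the linear map $T\colon \Z^d\to\Z^d$ with $T e_j = a_j$ is injective. A path of the PMDP starting at $\mathbf{0}$ with action sequence $(i_m)$ corresponds bijectively to a directed nearest-neighbour path in $\mathbb{L}^d$ starting at $\mathbf{0}$, via $a_j\leftrightarrow e_j$. The set of edges reachable from $\mathbf 0$ is $\{(T x, a_j): x\in\Z_+^d\}$, and injectivity of $T$ ensures these edges are pairwise distinct. Hence the field $c(x,e_j):=g(Tx,a_j)$ is i.i.d. with the law of $g$, and $v_n(\mathbf 0)$ has the same law as
\[
\tfrac1n \max\{l(\chi): \chi_0=\mathbf 0,\ |\chi|=n\} = \tfrac1n \max_{x\in\Z_+^d,\ x\cdot\mathbf 1_d=n} G_{\mathbf 0,x}.
\]
By Theorem~\ref{theorem:iid_and_oriented_percolation_games} and Theorem~\ref{theorem:main-uniform}, the left side converges a.s. to the uniform value $v_\infty$. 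It therefore suffices to show that $\frac1n\max_{|x|_1=n}G_{\mathbf 0,x}\to \mu(\mathbf 1_d)/d$ almost surely, or at least in expectation.

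For the lower bound, take $n=dk$ and $x=k\mathbf 1_d$. The shape theorem gives $\frac1{dk}G_{\mathbf 0,k\mathbf 1_d}\to \mu(\mathbf 1_d)/d$, so $v_\infty\ge \mu(\mathbf 1_d)/d$ along the subsequence $n=dk$, which suffices because the limit exists.

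For the upper bound, I will use the standard properties of the shape function: it is homogeneous of degree one, concave (superadditive and homogeneous), symmetric under coordinate permutations (since the weights are i.i.d.), and extends continuously to $\R_+^d$. These properties are part of the cited results in \cite{Martin2004LimitingShape}. Concavity and symmetry imply that on the simplex $\{y\ge0,\ y\cdot\mathbf 1_d=1\}$, $\mu$ is maximized at the barycenter, because averaging over permutations of $y$ yields $\mathbf 1_d/d$ and concavity gives $\mu(\mathbf 1_d/d)\ge \mu(y)$. The main obstacle is to turn this into a bound on the maximum over all $\binom{n+d-1}{d-1}$ endpoints. This requires a shape theorem that is uniform in direction: $\max_{|x|_1=n}\bigl(G_{\mathbf 0,x}-\mu(x)\bigr)\le \varepsilon n$ eventually, almost surely.

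To prove this uniformity, I will first apply concentration: $G_{\mathbf 0,x}$ is a maximum of sums of $n$ bounded independent variables, so Azuma/McDiarmid gives $\Pbb(G_{\mathbf 0,x}-\E G_{\mathbf 0,x}\ge \varepsilon n)\le e^{-c\varepsilon^2 n}$. A union bound over polynomially many endpoints, followed by Borel–Cantelli, then reduces the problem to $\max_{|x|_1=n}\E G_{\mathbf 0,x}\le n(\mu(\mathbf 1_d)/d+\varepsilon)$. For this expectation bound, superadditivity gives $\E G_{\mathbf 0,x}\le \mu(x)$ at lattice points, with the limit defined as a supremum by Fekete's lemma. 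Combined with homogeneity and the maximization on the simplex, this yields $\E G_{\mathbf 0,x}\le \mu(x)\le n\mu(\mathbf 1_d)/d$, with no error term at all. Therefore $\limsup_n v_n\le \mu(\mathbf 1_d)/d$ almost surely, and together with the lower bound this gives $v_\infty=\mu(\mathbf 1_d)/d$.

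If one avoids Fekete's lemma, a fallback is to use continuity of $\mu$ on the compact simplex together with a finite $\delta$-net of directions, absorbing nearby endpoints through the bounded-weight Lipschitz property of $G$.
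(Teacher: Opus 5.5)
Your argument is correct, and for the upper bound it takes a genuinely different route from the paper.

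The paper writes $dn\,v_{dn}=\max\{G_{\mathbf{0},m}:\ m\in\Z_+^d,\ |m|_1=dn\}$. It then uses a shape theorem that is uniform in the direction: a.s.\ convergence on $\Q_+^d$ is upgraded to uniform convergence on compacts via boundedness of the weights. This identifies $v_\infty$ directly with $\frac1d\max\{\mu(x):\ x\in\Q_+^d,\ x\cdot\mathbf{1}_d=d\}$, and symmetry and concavity then place the maximum at $\mathbf{1}_d$.

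You instead separate the two inequalities. The lower bound uses the shape theorem only along the diagonal. For the upper bound you replace uniformity by concentration, a union bound over the $\binom{n+d-1}{d-1}$ endpoints, and Borel--Cantelli. Fekete's lemma then gives the exact, non-asymptotic bound $\E G_{\mathbf{0},x}\le\mu(x)\le n\mu(\mathbf{1}_d)/d$ for every $x$ with $|x|_1=n$.

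What this buys you is that you never need continuity of $\mu$ on the closed simplex, nor any equicontinuity of $x\mapsto G_{\mathbf{0},x}$. That is the step the paper only sketches, and it is delicate: $G_{\mathbf{0},x}$ is Lipschitz in the endpoint only one-sidedly, for comparable endpoints. For instance, high weights on the horizontal edges at height $b$ make $G_{\mathbf{0},(a,b)}-G_{\mathbf{0},(a,b-1)}$ of order $a$, so the paper's step really needs a monotone sandwich argument. In exchange, the paper's route gives uniform convergence of $n^{-1}G_{\mathbf{0},\lfloor nu\rfloor}$, which says more about where near-optimal paths end. Your reduction to $I_C$ is also more careful than the paper's one-line identification: injectivity of $T$ is exactly what makes the pulled-back field i.i.d.

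One step needs to be stated precisely. McDiarmid applied edge by edge is useless for $d\ge 2$, since $G_{\mathbf{0},x}$ depends on order $n^d$ edges. Take as independent coordinates the layers $\{(y,e_j):\ |y|_1=k-1\}$, $k=1,\dots,n$. Resampling one layer moves $G_{\mathbf{0},x}$ by at most $2\|g\|_\infty$, because every path uses exactly one edge per layer. This gives $\Pbb(G_{\mathbf{0},x}-\E G_{\mathbf{0},x}\ge\varepsilon n)\le e^{-c\varepsilon^2 n}$. It is the same layer-by-layer martingale that underlies the concentration bound in Theorem~\ref{theorem:iid_and_oriented_percolation_games}.
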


		\begin{proof}
			It suffices to prove the statement for the canonical action set $I_C$ since the general case follows by identifying the $d$ independent actions with the canonical basis. For $d=1$, the problem reduces to a sum of i.i.d.\ random variables, and the SLLN gives $v_{\infty}=\mathbb{E}[c(e)]$. 
			
			Assume $d\ge 2$.~The maximal total payoff after $dn$ steps is given by
			\[
				dn\, v_{dn} = \max_{\substack{\chi:\ |\chi|=dn,\\ \chi_0=\mathbf{0}}} l(\chi) = 
				\max \left\{ G_{\mathbf{0}, \boldsymbol{m}} \colon \boldsymbol{m} \in \mathbb{Z}_+^d,\ \sum_{i=1}^d m_i = dn \right\}.
			\]
			Uniformly bounded weights yield a uniform Lipschitz bound for $n^{-1}G_{\mathbf{0}, \lfloor nu \rfloor}$.~Combined with almost sure convergence on the countable dense set $\Q_+^d$ and compactness, this yields uniform convergence on compact sets. Consequently, after dividing by $dn$ and letting $n \to \infty$ we obtain
			\[
				v_{\infty} = \lim_{n \to \infty} v_{dn} =
				\frac{1}{d}
				\max \left\{ \mu(\boldsymbol{x}) \colon \boldsymbol{x} \in \mathbb{Q}_+^d,\ \sum_{i=1}^d x_i = d \right\}.
			\]
			Since $\mu$ is invariant under permutations of the coordinates and concave \cite[Proposition 2.1]{Martin2004LimitingShape}, the maximum equals $\mu(\mathbf{1}_d)$, and therefore the claim follows.

		\end{proof}

		\textbf{LPP model with a fixed number of steps.} Another variant of LPP fixes the number of steps rather than the spatial endpoint. The last-passage time in this case is given by \cite[Section~2]{GeorgiouFirasSeppalainen2016VariationalFormulas}
		\[
			G_{\mathbf{0}, (n)} := \max_{\substack{\chi \colon |\chi| = n \\ \chi_0 = \mathbf{0}}} l(\chi), \; \text{for every } n \geq 1,
		\]
		where the maximum is taken over all directed nearest-neighbor paths starting at the origin and having length $n$. A law of large numbers also holds for this quantity: there exists $\mu_{\text{pl}} \in \mathbb{R}$ such that \cite[Theorem~2.4]{GeorgiouFirasSeppalainen2016VariationalFormulas}
		\[
			\lim_{n \to \infty} n^{-1}G_{\mathbf{0}, (n)} = \mu_{\text{pl}}, \; \text{almost surely}.
		\]

		\begin{proposition}\label{lemma:p2l-uniformvalue}
			For every oriented PMDP in dimension $d \geq 1$ with a set $I$ of $d$ linearly independent actions, after identifying $I$ with $I_C$, the $n$-stage value from the origin satisfies $v_n(\mathbf{0}) = n^{-1} G_{\mathbf{0},(n)}$. 
			Consequently, 
			\begin{equation}\label{eq:p2l-uniformvalue}
				v_{\infty} = \mu_{\mathrm{pl}}.
			\end{equation}
		\end{proposition}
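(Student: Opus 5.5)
Identifying $I$ with $I_C$ amounts to transporting the whole PMDP by the linear isomorphism $T\colon \Z^d\to\Z^d$ defined by $T(a_i)=e_i$. First I will check that this is legitimate. Since $I=\{a_1,\dots,a_d\}$ is linearly independent, the map $x\mapsto\sum_i \lambda_i e_i$ for $x=\sum_i\lambda_i a_i$ is a bijection from the lattice $\sum_i \Z a_i$ onto $\Z^d$. Only states in $\mathcal{R}_n(\mathbf{0})\subset \sum_i\N a_i$ are ever visited from the origin, so the reachable transition graph from $\mathbf{0}$ is isomorphic, as a directed graph, to the directed nearest-neighbour graph $\mathbb{L}^d$ restricted to $\Z_+^d$. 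Payoffs are indexed by (state, action) pairs, which correspond bijectively to directed edges. Because they are i.i.d., setting $c(e):=g(x,a_i)$ for $e=(Tx,Tx+e_i)$ produces an i.i.d.\ uniformly bounded field on $\mathbb{L}^d$ with the same law; on the edges not reached we assign independent copies. So the identification is a coupling in the same spirit as the proof of Theorem~\ref{theorem:convergence_game_m}.

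Next comes the identity $v_n(\mathbf{0})=n^{-1}G_{\mathbf{0},(n)}$. A pure strategy from $\mathbf{0}$ with horizon $n$ induces a sequence $i_1,\dots,i_n\in I$. Conversely, every such sequence is realized by some strategy in $\Sigma$: one can choose it $\mathcal{F}_{\mathrm{all}}$-measurably via the argmax over the finitely many paths, with a fixed tie-breaking order. Through $T$, these sequences are exactly the directed nearest-neighbour paths $\chi$ with $\chi_0=\mathbf{0}$ and $|\chi|=n$. Under the coupling, the accumulated payoff $n\gamma_n(\mathbf{0},\sigma)$ equals $l(\chi)$. Taking the maximum over the finite set of paths gives $n v_n(\mathbf{0})=G_{\mathbf{0},(n)}$.

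Finally, the law of large numbers \cite[Theorem~2.4]{GeorgiouFirasSeppalainen2016VariationalFormulas} gives $n^{-1}G_{\mathbf{0},(n)}\to\mu_{\mathrm{pl}}$ almost surely. Hence $v_n(\mathbf{0})\to\mu_{\mathrm{pl}}$ almost surely. By Theorem~\ref{theorem:iid_and_oriented_percolation_games}, $v_n(\mathbf{0})\to v_\infty$ almost surely, and by Theorem~\ref{theorem:main-uniform}, $v_\infty$ is the uniform value. Uniqueness of limits yields \eqref{eq:p2l-uniformvalue}. The coupling only changes the probability space, not the law of $v_n(\mathbf{0})$, and both limits are deterministic constants, so the equality transfers back to the original PMDP.

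The main obstacle, though mild, is the hypotheses of the cited result. \cite{GeorgiouFirasSeppalainen2016VariationalFormulas} works with weights on vertices or general potentials, and possibly assumes nonnegativity or moment conditions. I would handle this in two ways. First, shift the weights by a constant: this changes both $G_{\mathbf{0},(n)}/n$ and $v_n$ by the same constant, because all paths have length $n$. Second, if needed, note that edge weights fit their framework by taking the potential to depend on the step, which is allowed in their setting of shifts and steps. Boundedness then gives every moment condition they require.
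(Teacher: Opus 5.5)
Your proposal is correct and matches the argument the paper leaves implicit: the proposition is stated without a separate proof, with $n v_n(\mathbf{0}) = G_{\mathbf{0},(n)}$ taken as immediate from the correspondence between length-$n$ action sequences and directed nearest-neighbour paths, after which the cited point-to-line law of large numbers and Theorem~\ref{theorem:iid_and_oriented_percolation_games} give $v_\infty = \mu_{\mathrm{pl}}$. Your lattice isomorphism and coupling, your comment on realizing each path by a strategy, and your treatment of the cited result's hypotheses (constant shift, step-dependent potential) just fill in details the paper omits.
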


		We now make two remarks.~The first concerns related literature on the LPP models, and the second highlights LPP results from which results for PMDPs could be inferred.

		\begin{remark}[Duality between the two formulations and related models]\label{remark:literatureLPP}
			For the duality between the two formulations of LPP presented, see \cite[Section~4]{GeorgiouFirasSeppalainen2016VariationalFormulas}. These models are also closely related to \emph{directed polymers in a random environment} \cite{Comets2017, ZYGOURAS2024DirectedPolymers}; in particular, they arise as zero-temperature limits of directed polymer models \cite[Section~2]{GeorgiouFirasSeppalainen2016VariationalFormulas}. Another related model is \emph{greedy lattice paths} \cite{GandolfiKesten1994}, an undirected version of LPP with a fixed number of steps and self-avoiding paths.
		\end{remark}

		\begin{remark}[Further results in LPP]\label{remark:resultLPP}
			Variational characterizations of the LPP shape functions $\mu$ and $\mu_{\mathrm{pl}}$ are given in \cite[Theorems~3.2 and~7.2]{GeorgiouFirasSeppalainen2016VariationalFormulas} for step sets satisfying the constant path length property.~Combining Proposition~\ref{lemma:p2l-uniformvalue} and Proposition~\ref{prop:lpp-oritend-games}, one obtains the corresponding variational formulas for oriented PMDPs whose action sets satisfy the same structural condition.~Because these variational formulas are defined on infinite-dimensional spaces of functions or measures, extracting explicit information from them is difficult. In two dimensions, however, certain directed models are \emph{exactly solvable}. This in particular means that their shape functions can be evaluated in closed form. For standard fixed-endpoint LPP with exponential weights of mean $1$, the shape function is given by $\mu(x_1, x_2) = (\sqrt{x_1} + \sqrt{x_2})^2, \; \text{for every } (x_1, x_2) \in \mathbb{Q}_+^2$ (first shown by Rost \cite{Rost1981}). For geometric weights with parameter $p$, Johansson \cite{Johansson2000} proved that for every $(x_1, x_2) \in \mathbb{Q}^2_+$:
			\[
				\mu(x_1, x_2) = \frac{1}{p}\left(x_1 + x_2 + 2\sqrt{x_1 x_2 (1-p)}\right).
			\]
			These weights are not uniformly bounded.~If one were to relax the uniform boundedness assumption in our PMDP, then the limit value of Example~\ref{example:oriented-percolation-on-Z2}, with $F$ the exponential distribution of mean $1$, would be $2$.~Moreover, in these cases, it is known that the fluctuations of the last-passage time $G_{\mathbf{0}, \lfloor n(x_1, x_2)\rfloor}$ around its deterministic shape function are of order $n^{1/3}$, the transversal deviation of the optimal path is of order $n^{2/3}$, and the centered and scaled last-passage times converge to a non-degenerate limit distribution \cite{baik1999distribution, Johansson2000,longestsubsequence2022}.~These properties place these models in the Kardar-Parisi-Zhang (KPZ) universality class \cite{Corwin2016}. 
			% In \cite[Section~13.1]{longestsubsequence2022}, it can be found a handful other exactly solvable models, none of which fulfill the definition of PMDP. 
		\end{remark}

	\subsection{A related variable-length quantity}
		In the standard fixed-endpoint formulation, the path length is fixed automatically because every path between two given points has the same number of steps.~For oriented PMDPs this need not be true (for instance, Example~\ref{example:extended_moves}).~We now move beyond the canonical setting and introduce a related quantity that naturally captures oriented PMDPs, yielding an alternative characterization of their uniform value.

		Let $\mathcal{R}$ denote the set of points reachable from the origin using actions in $I$. For $u \in \mathcal{R}\setminus\{\mathbf{0}\}$, define
		\begin{equation}\label{point-to-line-limit-value}
			\mathcal{L}_{\mathbf{0}, u} = \max_{\substack{\chi \colon \mathbf{0} \underset{I}{\to} u}} \frac{l(\chi)}{|\chi|},
		\end{equation}
		where the maximum is taken over paths from $\mathbf{0}$ to $u$ using actions in $I$. Since $I$ is finite and oriented, only finitely many paths join $\mathbf{0}$ to a given reachable point $u$, so the maximum is attained. To the best of our knowledge, this quantity has not been studied in the LPP literature.~A law of large numbers for $\mathcal{L}_{\mathbf{0},\lfloor nu\rfloor}$ can be obtained by a duality argument with the standard fixed-endpoint formulation of LPP. 
		
		An alternative characterization of the uniform value in terms of this quantity is the following.

		\begin{proposition}\label{prop:characterization-limit-value-with-L}
			For every oriented PMDP with action set $I$, almost surely,
			\[
				v_{\infty}^I = \limsup_{n \to \infty} \max_{u \in \mathcal{R}_n} \mathcal{L}_{\mathbf{0}, u},
			\]
			where $\mathcal{R}_n$ is the set of points reachable from the origin in exactly $n$ steps using actions in $I$ (Definition~\ref{def:reachable-set}).
		\end{proposition}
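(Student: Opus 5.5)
Set $a_n := \max_{u \in \mathcal{R}_n} \mathcal{L}_{\mathbf{0},u}$ and $Z_k := k\,v_k(\mathbf{0})$. Note that $Z_k$ is the best total payoff over paths of exactly $k$ steps from the origin. The plan is to prove the two inequalities $\limsup_n a_n \ge v_\infty$ and $\limsup_n a_n \le v_\infty$, almost surely, by relating $a_n$ to the ratios $Z_k/k$. The key observation is that any path $\chi$ from $\mathbf{0}$ to some $u\in\mathcal{R}_n$ has length $|\chi| = k$ with $k$ comparable to $n$. Indeed, by Lemma~\ref{prop:structural_characterization} there is $u_0$ with $\delta := \min_{a\in I} a\cdot u_0 > 0$ and $\Delta := \max_{a \in I} a\cdot u_0$. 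Any path reaching $u$ therefore has length in $[\,u\cdot u_0/\Delta,\ u\cdot u_0/\delta\,]$. For $u \in \mathcal{R}_n$ we have $n\delta \le u\cdot u_0 \le n\Delta$, so $k\in[\,n\delta/\Delta,\ n\Delta/\delta\,]$. Consequently
\[
a_n = \max\Bigl\{ \tfrac{l(\chi)}{|\chi|} : \chi_0=\mathbf{0},\ \text{endpoint of } \chi \in \mathcal{R}_n \Bigr\} \le \max_{k \in [c n, C n]} \frac{Z_k}{k}.
\]
The constants here are $c=\delta/\Delta$ and $C=\Delta/\delta$.

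\emph{Upper bound.} By Theorem~\ref{theorem:iid_and_oriented_percolation_games}, $Z_k/k = v_k(\mathbf{0}) \to v_\infty$ almost surely. Since $cn\to\infty$, the maximum of $Z_k/k$ over $k\in[cn,Cn]$ is at most $\sup_{k\ge cn} v_k(\mathbf{0})$, which tends to $v_\infty$. Hence $\limsup_n a_n \le v_\infty$ almost surely.

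\emph{Lower bound.} Take the optimal $n$-step path from $\mathbf{0}$, which realizes $Z_n$ and ends at some $u\in\mathcal{R}_n$. Because it is a path of length $n$ to $u$, we get $\mathcal{L}_{\mathbf{0},u} \ge Z_n/n = v_n(\mathbf{0})$. Thus $a_n \ge v_n(\mathbf{0})$ for every $n$, and so $\liminf_n a_n \ge v_\infty$. Combining the two bounds, $a_n$ actually converges to $v_\infty$; in particular the $\limsup$ in the statement equals $v_\infty^I$ almost surely.

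The main obstacle is the upper bound. There, paths to a point in $\mathcal{R}_n$ may use a different number of steps than $n$, for example with positively dependent actions as in Example~\ref{example:extended_moves}, so $\mathcal{L}$ is not simply $v_n$. The step that needs care is bounding all admissible lengths linearly in $n$ via the orientation direction $u_0$, so that the almost sure convergence of $v_k(\mathbf{0})$ applies uniformly over the whole window $k\in[cn,Cn]$. This needs only the tail of a single convergent sequence, with no union bound over starting points. The whole argument is carried out on the full-measure event of Theorem~\ref{theorem:iid_and_oriented_percolation_games} for $z=\mathbf{0}$. The trivial case does not arise since the PMDP is oriented, and $\mathcal{R}_n$ excludes $\mathbf{0}$ for $n\ge1$ by orientation, so $\mathcal{L}$ is defined on it.
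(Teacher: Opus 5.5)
Your proof is correct and follows the same two-inequality structure as the paper. The lower bound takes the optimal $n$-step path, which ends in $\mathcal{R}_n$, and the upper bound dominates each $l(\chi)/|\chi|$ by $v_{|\chi|}(\mathbf{0})$. Your explicit window $|\chi|\in[cn,Cn]$, obtained from the orientation direction, makes rigorous the paper's brief remark that letting $n\to\infty$ ``forces $K\to\infty$'', and it also shows that the $\limsup$ is in fact a limit.
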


		\begin{proof}
			Let $u \in \mathcal{R}_n$ and let $\chi^*$ be the path achieving the maximum in $\mathcal{L}_{\mathbf{0}, u}$, with length $K = |\chi^*|$. By the definition of the finite-horizon decision process value, choosing this path gives $v_{K}(\mathbf{0}) \geq \mathcal{L}_{\mathbf{0}, u}$. Taking the maximum over $u \in \mathcal{R}_n$ and letting $n \to \infty$ (which forces $K \to \infty$ since $I$ is oriented) yields
			\begin{equation}\label{eq:equality1}
				v_{\infty} \geq \limsup_{n \to \infty}\max_{u \in \mathcal{R}_n}\mathcal{L}_{\mathbf{0}, u}, \; \mathbb{P}\text{-a.s}.
			\end{equation}
			Conversely, fix $\omega$ in the full-measure set where the almost sure convergence to the limit value from the origin holds. For $n \geq 1$, let $\chi_n$ be the optimal path generated by the optimal $n$-stage strategy from the origin, ending at some point $u^* \in \mathcal{R}_n$. It follows that $\mathcal{L}_{\mathbf{0}, u}^{\omega} \geq v_n^{\omega}(\mathbf{0})$, and thus
			\[
				\max_{u^* \in \mathcal{R}_n}\mathcal{L}_{\mathbf{0}, u^*}^{\omega} \geq v_n^{\omega}(\mathbf{0}).
			\]
			Letting $n \to \infty$ yields the reversed inequality to \eqref{eq:equality1}, completing the proof.

		\end{proof}

        \begin{remark}
            Let $\mathcal{D}=\operatorname{conv}(I)$. By definition of $v_n$, we have $v_{\infty} \geq \sup_{u \in \mathcal{D}}\limsup_{n \to \infty} \mathcal{L}_{\textbf{0}, \lfloor nu\rfloor}$.~Equality holds if there exists an optimal semi-infinite path from the origin with a well-defined asymptotic direction.
        \end{remark}

		\begin{remark}[Everything coincides for the canonical action set]
			In the quantity $\max_{u \in \mathcal{R}_n} \mathcal{L}_{\mathbf{0},u}$, the endpoint is restricted to points reachable in $n$ steps, but the number of steps used in the definition of $\mathcal{L}_{\mathbf{0},u}$ is free. By contrast, in $v_{n}(\mathbf{0})$ the time horizon is fixed, but the endpoint is free.~These two quantities coincide when every path from $\mathbf{0}$ to a point $u \in \mathcal{R}_n$ has exactly $n$ steps. In particular, for the canonical action set, every path to $\lfloor nu \rfloor$ has length $\|\lfloor nu \rfloor\|_1 \sim n\|u\|_1$.~Let $\mathcal{D}=\operatorname{conv}(I_C)=\{x\in\mathbb{R}_+^d:\|x\|_1=1\}$.~Using Proposition~\ref{prop:lpp-oritend-games}, Proposition~\ref{lemma:p2l-uniformvalue}, homogeneity of $\mu$ \cite[Proposition 2.1]{Martin2004LimitingShape} together with an algebraic manipulation yields:
			\[
				\max_{u \in \mathcal{D}}\ \lim_{n \to \infty} \mathcal{L}_{\mathbf{0}, \lfloor nu\rfloor} =  \max_{u \in \mathcal{D}}\mu(u) = \frac{\mu(\mathbf{1}_d)}{d} = v_{\infty}^{I_C} = \mu_{\mathrm{pl}} = \limsup_{n \to \infty} \max_{u \in \mathcal{R}_n} \mathcal{L}_{\mathbf{0}, u}, \ \Pbb\text{-a.s.}
			\]
		\end{remark}
		
	\section{Perspectives}\label{section:perspectives}
	
		We discuss a possible additional use of condition~\eqref{eq:casi-uniform} (Point 1), an open question concerning model relaxations (Point 2) and a possible variant of the model (Point 3). Then, future research directions concerning the connection with percolation theory are presented, along with a conjecture based on the analysis in Subsections~\ref{subsection:critical_probability_threshold} and~\ref{subsection:simulations} (Point 4). Finally, we raise two questions concerning the link with LPP (Point 5).

        \begin{enumerate}
		      \item Under condition~\eqref{eq:casi-uniform}, the proof of the Tauberian theorem in \cite{Ziliotto2016-tauberian} could potentially be adapted to obtain a version of the uniform Tauberian theorem in \cite{Lehrer1992} featuring uniform convergence on the set of states reachable within a polynomial distance in $n$, rather than on the whole $\mathbb{Z}^d$.
            \item A natural direction is to relax the i.i.d.\ assumption on the payoffs. Can convergence results be established when the payoffs are i.i.d.\ only across space? In potential applications, this assumption may be more realistic, since a decision maker may have correlated interests across different actions. Under orientability, the answer is yes by \cite[Theorem~2.1]{GarnierZiliotto2022}. The main difficulty therefore lies in controllable PMDPs. In this case, optimal strategies may no longer be cyclic. Instead, the player may need to follow self-avoiding paths, which introduces two challenges:~edge payoffs along such paths are non-stationary and the optimization must be performed over an uncountable set of paths. 
            \item It may be interesting to consider random transition vectors rather than deterministic ones.
            \item The analysis in Section~\ref{section:dimensional-lifting-oriented-percolation} suggests that PMDPs could offer a novel route for approximating critical probability parameters using sequences of low-dimensional PMDPs. Although established series-expansion methods \cite{Jensen1999} yield high numerical precision, our formulation introduces an apparently simpler perspective on the problem.~A comprehensive performance comparison and algorithmic refinement represent natural next steps for researchers interested in alternative parameter-approximation techniques. 
            
            Moreover, based on the analysis in Subsections~\ref{subsection:critical_probability_threshold} and~\ref{subsection:simulations}, we propose the following conjecture.
    		\begin{conjecture}
    			For oriented PMDPs with $\mathrm{Bernoulli}(p)$ payoffs and $p \in (0, 1)$, the uniform value, as a function of $p$, undergoes a phase transition at the same critical parameter as an associated oriented percolation process defined on the same transition graph.
    		\end{conjecture}
            \item We believe that PMDPs provide a natural setting in which to extend results from LPP (see Remark~\ref{remark:resultLPP}). The following questions are of particular interest: Is it possible to derive an explicit variational formulation for the uniform value of oriented PMDPs with positive linearly dependent actions? Which PMDPs exhibit KPZ scaling?
        \end{enumerate}

	\appendix
   
    \section{Proof of Proposition \ref{propex13}}\label{section:appendix}
    
    A difficulty is that, if the future begins with 0 or 10, one has to compare a strategy that starts by +3 with the same strategy replacing the first action by +1+1+1. These 2 strategies differ by a payoff of 1 and a length of 2, hence the better one depends on what happens at the very end of the stream of payoffs.~As a consequence there is no hope of having a strategy that is optimal for every fixed horizon (even large enough). We solve this issue by considering instead $\lambda$-discounted games, with payoff
    
    \[
        \gamma_\lambda^{\omega}(z, \sigma) := \lambda \sum_{m=1}^{+\infty}(1-\lambda)^{m-1}g_{\omega}(z_m, \sigma_m(z, \omega)).
    \]
    
    An additional benefit will be that computations of the discounted value will involve fixed points instead of recursive equations and hence will be easier. A small drawback is that, in contrast to finitely repeated games, the order of payoffs counts. For example, if the future begins with 10000111, the strategy $\sigma^*$ given in the Proposition recommends playing +3+3+1 but playing +1+3+3 is in fact marginally better (it gives a payoff of 1 then 0 then 1 instead of 0 then 1 then 1, which is strictly better since one discounts payoffs). Finding a strategy 0-optimal for every (small enough) $\lambda$ would thus be very cumbersome, and we will instead prove that $\sigma^*$ is $\lambda$-optimal 
    %\mgg{(maybe first recall Definition \ref{def:uniform_value} and say it is the same for the $\lambda$-discounted game?)}
    in $\Gamma_\lambda(z)$ for every sufficiently small $\lambda$:
    
    \[
        \exists \lambda_0 \in (0,1]: \forall \lambda \leq \lambda_0, \; \gamma_\lambda(z, \sigma^*) \geq v_{\lambda}(z) - \lambda, \;\Pbb\text{-a.s.}
    \]
    
    Letting $\lambda$ go to 0 will then imply that  $\sigma^*$ guarantees $v_\infty$ in $\Gamma(z)$. 
    
    Let us first give a sketch of the proof. It is easy to show that it is optimal to play $+3$ if the future begins with $00$, and to play $+1$ if the future begins with $11$. 
    A first question is, when the future begins with 01 or 10, whether the player prefers to start by playing $+3$ or +1+1+1. For a small $\lambda$ the difference between the first and second types of strategy will be of the order of $\lambda(-1+2v_\lambda)$, the $-1$ coming from the loss of a payoff of 1 during the first two steps, and the $2v_\lambda$ coming from the fact that by making an immediate jump of +3 instead of three small jumps one will have an earlier access to future payoffs. Hence, a type of strategy will be better than the other depending on whether $p$ is below or above the probability $p^*$ such that $v_\infty(p^*)=\frac{1}{2}$. 
    When $p<p^*$ one then verifies easily that the proposed myopic strategy is optimal. 
    Things are more complex when $p>p^*$: the preceding argument ensures that it is not optimal to start by +1+1+1, but it says nothing of strategies starting with +1+3 or +1+1+3. We then consider future payoff sequence beginning with either by $10^k1$ or $010^k1$. We first determine the optimal strategy for $k$ small. When $k$ is at least $3$, there are long sequences of $0$ in the future in which it is optimal to play $+3$, and this implies by induction that the optimal strategy depends only on $k$ modulo 3.
    
    We start with a simple technical lemma.
    
    \begin{definition}[almost optimality in $\lambda$-discounted games]
        For a fixed discount factor $\lambda$, a strategy $ \sigma$ is said to be almost optimal in $\Gamma_\lambda(z)$ if there almost surely exists an infinite sequence $0 = n_0 < n_1 < n_2 < \cdots$ such that the best deviation from $\sigma$ involves only swapping two payoffs in each of the blocks $[1,n_1],\ [n_1+1, n_2], \ldots$.
    \end{definition}
    
    \begin{lemma}\label{almostopt}
    If a strategy is almost optimal in $\Gamma_\lambda(z)$ for some discount factor $\lambda$, then it is $\lambda$-optimal in $\Gamma_\lambda(z)$. In particular, if a strategy is almost optimal in $\Gamma_\lambda(z)$ for all discount factors $\lambda$ small enough, then it is $0$-optimal in $\Gamma(z)$.
    \end{lemma}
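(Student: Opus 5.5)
We read ``almost optimal'' as follows. Let $\tau$ be an optimal strategy in $\Gamma_\lambda(z)$; such a strategy exists by a standard compactness argument, because payoffs are bounded and discounting makes the payoff continuous in the product topology on plays. The stream of payoffs $(g'_m)$ produced by $\tau$ is then obtained from the stream $(g_m)$ produced by $\sigma$ by transpositions of two payoffs, at most one transposition in each block $[n_{j}+1,n_{j+1}]$. The first part of the lemma follows from bounding the total discounted effect of these transpositions. The second part follows from the Abelian/Tauberian link between discounted and Cesàro values already used in \eqref{eq:limit_value_eq_discounted_value}.

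\emph{Step 1 (bounding a single swap).} A swap moves a payoff from stage $a$ to stage $b>a$ inside one block, with payoffs in $\{0,1\}$. Its effect on the discounted payoff is at most
\[
\lambda\bigl((1-\lambda)^{a-1}-(1-\lambda)^{b-1}\bigr)\le \lambda (1-\lambda)^{a-1}\le \lambda(1-\lambda)^{n_j},
\]
where $a\ge n_j+1$ is the first stage of the block.

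\emph{Step 2 (summing over blocks).} Summing over blocks gives
\[
v_\lambda(z)-\gamma_\lambda(z,\sigma)\le \lambda\sum_{j\ge0}(1-\lambda)^{n_j}.
\]
Since $n_j\ge j$, this sum is at most $\lambda\sum_j(1-\lambda)^{j}=1$, which is too weak. We therefore refine the bound. The swaps in distinct blocks occupy disjoint stage pairs. Each swap costs at most $\lambda(1-\lambda)^{a-1}\bigl(1-(1-\lambda)^{b-a}\bigr)$, and this is bounded by the total discount weight $\lambda\sum_{m=a}^{b-1}(1-\lambda)^{m-1}$ on the stages $a,\dots,b-1$. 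Here we use $1-(1-\lambda)^{k}=\lambda\sum_{i<k}(1-\lambda)^i$, so the cost is $\lambda$ times the weight of those stages. The intervals $[a,b)$ from different blocks are disjoint, so the total cost is at most $\lambda\cdot\lambda\sum_{m\ge1}(1-\lambda)^{m-1}=\lambda$. This proves $\lambda$-optimality.

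\emph{Step 3 (the second statement).} Suppose $\sigma$ is almost optimal for every $\lambda\le\lambda_0$. Then $\gamma_\lambda(z,\sigma)\ge v_\lambda(z)-\lambda$, and $v_\lambda(z)\to v_\infty$ by \eqref{eq:limit_value_eq_discounted_value}, which rests on Theorem~\ref{theorem:main-uniform}. Hence $\liminf_{\lambda\to0}\gamma_\lambda(z,\sigma)\ge v_\infty$. For a fixed payoff stream, the Hardy--Littlewood relation $\liminf_n\gamma_n\le\liminf_{\lambda}\gamma_\lambda\le\limsup_\lambda\gamma_\lambda\le\limsup_n\gamma_n$ alone does not give $\liminf_n\gamma_n(z,\sigma)\ge v_\infty$. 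We close this gap with a Tauberian argument for $\{0,1\}$-valued streams. Suppose $\liminf_n\gamma_n<v_\infty-\eta$ along $n_k\to\infty$. We would compare $\sigma$ with the strategy that follows $\sigma$ up to stage $n_k$ and then plays $0$-optimally from $z_{n_k}$. Lemma~\ref{lemma:equicontinuity} controls the continuation uniformly over reachable states. Taking $\lambda\approx c/n_k$ with suitable $c$, this strategy beats $\sigma$ by more than $\lambda$ in $\Gamma_\lambda$, which contradicts $\lambda$-optimality.

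We expect Step 3 to be the main obstacle: choosing $c$ so that the gain of order $\eta$ on the early window outweighs the tail terms. If this fails, we would fall back on the weaker conclusion that $\sigma$ guarantees $v_\infty$ in the discounted (Abel) sense.
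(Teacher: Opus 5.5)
Your Steps 1--2 are correct and are essentially the paper's argument for the first claim. The paper bounds the gain on each block $[n_k+1,n_{k+1}]$ by $\lambda\bigl((1-\lambda)^{n_k}-(1-\lambda)^{n_{k+1}}\bigr)$, i.e.\ $\lambda$ times the discount weight of the whole block, and telescopes to $\lambda$. You note that a swap between stages $a<b$ costs at most $\lambda$ times the weight of $[a,b)$, and that these intervals are disjoint across blocks. That is the same computation on a sub-interval. The preliminary bound $\lambda\sum_j(1-\lambda)^{n_j}$ is an unnecessary detour.

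The gap is in Step 3. The paper only writes out the first claim and treats the passage to $\Gamma(z)$ as immediate, so you are right that something must be added. Your sketch does not supply it, for three reasons.
\begin{itemize}
\item The comparison strategy you describe coincides with $\sigma$ on stages $1,\dots,n_k$, which is exactly where $\sigma$ underperforms. It therefore cannot gain anything of order $\eta$; it only tests whether $\sigma$'s tail is near-optimal.
\item Lemma~\ref{lemma:equicontinuity} controls $v_n$, not $v_\lambda$, and only for oriented PMDPs.
\item You end with a fallback to the Abel-sense conclusion, so the second claim is not established.
\end{itemize}

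The missing idea is simpler. You only used the lower bound, but trivially $\gamma_\lambda(z,\sigma)\le v_\lambda(z)$, so
\[
v_\lambda(z)-\lambda\le\gamma_\lambda(z,\sigma)\le v_\lambda(z)\quad\text{for all small }\lambda .
\]
Both bounds tend to $v_\infty$ by \eqref{eq:limit_value_eq_discounted_value}, so the Abel limit $\lim_{\lambda\to0}\gamma_\lambda(z,\sigma)$ \emph{exists} and equals $v_\infty$. The chain $\liminf_n\gamma_n\le\liminf_\lambda\gamma_\lambda\le\limsup_\lambda\gamma_\lambda\le\limsup_n\gamma_n$ that you quote is only the Abelian direction, and even with the Abel limit it gives just $\liminf_n\gamma_n\le v_\infty\le\limsup_n\gamma_n$.

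What closes the gap is the genuine Hardy--Littlewood Tauberian theorem, in Karamata's form: a bounded-below sequence whose Abel means converge has Cesàro means converging to the same limit. Applied to the bounded payoff stream generated by $\sigma$, it gives $\gamma_n(z,\sigma)\to v_\infty$ directly. No deviation argument and no equicontinuity are needed.

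To work on a single null set, restrict to $\lambda=1/j$. For a bounded stream, $\sup_{\lambda\in[1/(j+1),1/j]}|\gamma_\lambda-\gamma_{1/j}|=O(1/j)$, so convergence along this sequence yields the limit as $\lambda\to0$.
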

    
    \begin{proof}
    Fix a  discount factor $\lambda$ and an almost optimal strategy $\sigma$ in $\Gamma_\lambda(z)$. 
    By definition, almost surely one can find $n_1<n_2<\cdots$ such that on each block $[n_k+1,n_{k+1}]$ the optimal deviation gains at most $\lambda((1-\lambda)^{n_k}-(1-\lambda)^{n_{k+1}-1})$ (if one exchanges a 0 in stage $n_k+1$ with a 1 in stage $n_{k+1}$). Hence, almost surely,
    
    \begin{eqnarray*}
        v^\omega_\lambda(z) & \leq & \gamma^\omega_\lambda(z,\sigma)+\lambda \sum_{k=0}^{+\infty}((1-\lambda)^{n_k}-(1-\lambda)^{n_{k+1}-1})\\
        & \leq & \gamma^\omega_\lambda(z,\sigma)+\lambda \sum_{k=0}^{+\infty}((1-\lambda)^{n_k}-(1-\lambda)^{n_{k+1}})\\
        & = & \gamma^\omega_\lambda(z,\sigma)+\lambda.
    \end{eqnarray*}
    
    \end{proof}
    
    \begin{proof} [Proof of Proposition \ref{propex13}] \textbf{First step: comparison between +3 and +1+1+1}. Let $\lambda \in (0,1]$ be a discount factor and $z$ the starting state. Assume in this step that exactly one of $g_\omega(z+1)$ and $g_\omega(z+2)$ is equal to 1.
    Let $\sigma$ be any strategy starting with $+3$ and $\tilde{\sigma}$ the same strategy replacing the first $+3$ by $+1+1+1$. We compare the discounted payoff of the two strategies. By dynamic programming, 
    \[
        \gamma^\omega_\lambda(z,\sigma)=\lambda g_\omega(z+3)+(1-\lambda) \gamma^\omega_\lambda(z+3,\sigma),
    \]
    while the discounted payoff with $\tilde{\sigma}$ is
    \[
        \gamma^\omega_\lambda(z,\tilde{\sigma})=\lambda g_\omega(z+1)+\lambda(1-\lambda) g_\omega(z+2)+\lambda(1-\lambda)^2 g_\omega(z+3)+(1-\lambda)^3 \gamma^\omega_\lambda(z+3,\tilde{\sigma}).
    \]
    
    Since the two strategies differ only in the first moves, $ \gamma^\omega_\lambda(z+3,\sigma)=\gamma^\omega_\lambda(z+3,\tilde{\sigma})$.~Hence, the difference between their two payoffs is 
    \begin{eqnarray*}
        \gamma^\omega_\lambda(z,\sigma)-\gamma^\omega_\lambda(z,\tilde{\sigma}) & = & -\lambda g_\omega(z+1)-\lambda(1-\lambda) g_\omega(z+2)+\lambda^2(2-\lambda)g_\omega(z+3)+(1-\lambda)\lambda(2-\lambda) \gamma^\omega_\lambda(z+3,\sigma)\\
                        & = & \lambda\left[- 1+2  \gamma^\omega_\lambda(z+3,\sigma)+o(1)\right].
    \end{eqnarray*}
    
    Hence $\sigma$ cannot be optimal if $v_\infty(p)<\frac{1}{2}$, because then $2\gamma^\omega_\lambda(z+3,\sigma)$ would almost surely be less than 1 for $ \lambda$ small enough, and $\tilde{\sigma}$ would then be a profitable deviation. Similarly, $\tilde{\sigma}$ cannot be optimal if $v_\infty(p)>\frac{1}{2}$.
    
    By Proposition \ref{prop:bernoulli_continuity} there exists $p_1, p_2$ with $0<p_1\leq p_2<1$ such that $v_\infty(p)<\frac{1}{2}$ on $[0, p_1)$, $v_\infty(p)=\frac{1}{2}$ on $[p_1,p_2]$ and $v_\infty(p)>\frac{1}{2}$ on $(p_2, 1]$. Because of the previous observation we will handle separately these cases.
    
    \textbf{Second step: optimal strategy when $0<p<p_1$.} By the previous step, for $\lambda$ small enough there is a profitable deviation from any strategy $\sigma$ such that $\sigma(10)=+3$ or $\sigma(01)=+3$ or, a fortiori, $\sigma(11)=+3$. On the other hand, if the future is 00 then +3 is at least as good as +1 for any $\lambda$: +1+1+1 is strictly worse than +3, +1+1+3 is weakly worse than +3+1+1 and +1+3 is weakly worse than +3+1.~All of this implies that the 2-myopic  strategy $\sigma^*$ given in the proposition is optimal for $\lambda$ small enough and hence also in $\Gamma(z)$.
    
    \textbf{Third step: value when $0<p<p_1$.} We now compute $v_\infty(p)$ in that case. Denote $w_\lambda$ the expectation of the value of the $\lambda$-discounted game (which does not depend on the starting state $z$).~By dynamic programming and since $\sigma^*$ is optimal for $\lambda$ small enough,
    \[
        w_\lambda=p(\lambda+(1-\lambda)v_\lambda)+p(1-p)(\lambda(1-\lambda)+(1-\lambda)^2 w_\lambda)+(1-p)^2(\lambda p+(1-\lambda)w_\lambda).
    \]
    which implies $w_\lambda=\frac{p+p(1-p)(1-\lambda)+p(1-p)^2}{1+p(1-p)(1-\lambda)}$ for $\lambda$ small enough and thus that $v_\infty(p)=\frac{p(3-3p+p^{2})}{1+p-p^2}$.\\ Finally, by continuity $\frac{1}{2}=v_\infty(p_1)=\frac{p_1(3-3p_1+p_1^{2})}{1+p_1-p_1^2}$ which yields $p_1=p^*$.
    
    \textbf{Fourth step: optimal strategy when $p_2 < p < 1$}. This case is more complex since, for example if the future begins with $01$, Step 1 only implies that +1+1+1 is worse than +3, which does not settle whether +1 or +3 is better.~Thus, we need to account for many different cases for the future.
    \begin{itemize}
        \item If the future begins with 00, then exactly as in Step 2 it is optimal to play +3 for any $\lambda$.
        \item If the future begins with 011, then +1+1+1 is strictly worse than +3 for $\lambda$ small enough by Step 1, +1+1+3 is strictly worse than +3+1+1 and +1+3 is strictly worse than +3+1. Hence +3 is optimal for $\lambda$ small enough.  
        \item If the future begins with 0101, then +1+1+1 is worse than +3 by Step 1 for $\lambda$ small enough, +1+1+3 is the same as +3+1+1 and +1+3 is the same as +3+1. Hence +3 is optimal  for $\lambda$ small enough. 
        \item If the future begins with 01001, then +3+1 is the same as +1+3, and +3+3 is strictly worse than +1+1+3+1.  Hence +1 is optimal  for any $\lambda$.
        \item If the future begins with $010^k1$ with $k\geq 3$, then any optimal strategy will play +3 at least once (if the player starts by +1+1 he then has to play +3) hence one can remove the three zeros at positions $z+3, z+4$ and $z+5$ and proceed by induction on $k$ to prove that $\sigma^*$ is optimal up to\footnote{Optimality is not guaranteed in that case since the three zeros we remove will correspond to a +3 sometimes played in the first step and sometimes later. Hence one could sometimes do marginally better by exchanging two payoffs (see the example before Lemma \ref{almostopt}).} maybe exchanging two payoffs on this block.
        \item If the future begins with 11, then +1+1+1 is strictly better than +3 hence +1 is optimal for any $\lambda$.
        \item If the future begins with 101, then +1+1+1 is strictly worse than +3 for $\lambda$ small enough by Step 1, +1+1+3 is weakly worse than +3+1+1, and +1+3 is the same as +3+1. Hence +3 is optimal  for $\lambda$ small enough. 
        \item If the future begins with 1001, then +3+1 is strictly worse than +1+3, and +3+3 is strictly worse than +1+3+1+1. Hence +1 is optimal for any $\lambda$.
        \item Finally, if the future begins with $10^k1$ with $k\geq3$, then any optimal strategy will play +3 at least once (if the players starts by +1 he then has to play +3)  hence one can remove the three zeros at positions $z+2, z+3$ and $z+4$ and proceed by induction on $k$  to prove that $\sigma^*$ is optimal up to\footnote{See previous footnote.} maybe exchanging two payoffs on this block.
    \end{itemize}
    This proves that the $\sigma^*$ from the proposition is almost optimal for $\lambda$ small enough and hence by Lemma \ref{almostopt} is optimal in $\Gamma(z)$.
    
    \textbf{Fifth step:  value when $p_2<p<1$}. Let us now compute the value $v_\infty(p)$ in that case. Let $w_\lambda$ denote the expected payoff under $\sigma^*$ in the $\lambda$-discounted game; this expectation does not depend on the starting state $z$. Let $w_{\lambda}^1$ and $w_{\lambda}^0$ denote the corresponding conditional expected payoffs  given that the future begins with 1 and 0, respectively.~We first derive a recursive formula for $w_{\lambda}^1$ using dynamic programming. Given that the future begins with $1$, then for any $k\geq 0$ the future begins with $10^k1$ with probability $p(1-p)^k$ where for convenience we denote $q=1-p$. Then:
    \begin{itemize}
        \item If $k=3k'$, $\sigma^*$ plays +1 (with a payoff of 1), then $k'$ times +3 (with $k'$ payoffs of 0). Then we are at a position with future 1, and hence expected payoff $w_{\lambda}^1$.
        \item If $k=3k'+1$, $\sigma^*$ plays $k'$ times +3 (with $k'$ payoffs of 0). Then once more +3 (with a payoff of 1), and then we are at a position with no conditioning on the future payoffs, and hence expected payoff $w_\lambda$.
        \item If $k=3k'+2$, $\sigma^*$ plays +1 (with a payoff of 1), then $k'$ times +3 (with $k'$ payoffs of 0). Then once more +3 (with a payoff of 1), and then we are at a position with no conditioning on the future payoffs, and hence expected payoff $w_\lambda$.
    \end{itemize}
    Thus, we get:
    \begin{align*}
        w_{\lambda}^1 & = \sum_{k'=0}^{+\infty} pq^{3k'}(\lambda+(1-\lambda)^{k'+1}w_{\lambda}^1) + \sum_{k'=0}^{+\infty}pq^{3k'+1}(\lambda(1-\lambda)^{k'}+(1-\lambda)^{k'+1}w_\lambda)\\
                & + \sum_{k'=0}^{+\infty}pq^{3k'+2}(\lambda+\lambda(1-\lambda)^{k'+1}+(1-\lambda)^{k'+2}w_\lambda).
    \end{align*}
    Denote $a_\lambda:=w_{\lambda}^1-w_\lambda$. Regrouping terms,
    \begin{align*}
        w_\lambda+a_\lambda & = p a_\lambda \sum_{k'=0}^{+\infty} q^{3k'}(1-\lambda)^{k'+1} + p w_\lambda  \sum_{k'=0}^{+\infty} (1-\lambda)^{k'+1} [q^{3k'}+q^{3k'+1}+(1-\lambda)q^{3k'+2}]\\
       & + \lambda p \sum_{k'=0}^{+\infty}q^{3k'}+q^{3k'+1}(1-\lambda)^{k'}+q^{3k'+2}(1+(1-\lambda)^{k'+1})\\
        & = p a_\lambda\left(\frac{1-\lambda}{1-(1-\lambda)q^3}\right) + p w_\lambda \left(\frac{(1-\lambda)+(1-\lambda)q+(1-\lambda)^2q^2}{1-(1-\lambda)q^3}\right)\\
        & + \lambda p \left(\frac{1+q^2}{1-q^3}+\frac{q+(1-\lambda)q^2}{1-(1-\lambda)q^3}\right).
    \end{align*}
    We will now neglect all terms  in $o(\lambda)$, using that $|w_\lambda|\leq 1$, that (by a standard coupling argument) $|a_\lambda|\leq \lambda$:
    \begin{equation*}
        w_\lambda+a_\lambda = a_\lambda\left(\frac{p}{1-q^3}\right) + w_\lambda \left(1-\lambda\frac{p(1+q+2q^2)+q^3}{1-q^3}\right) +\lambda p \left(\frac{1+q+2q^2}{1-q^3}\right)+o(\lambda),
    \end{equation*}
        which becomes, after subtraction of $w_\lambda$, division by $\lambda$ and multiplication by $1-q^3$:
    \begin{equation}\label{eqfuture1}
        w_\lambda(1+q^2-q^3)+\frac{a_\lambda}{\lambda}q(1+q)(1-q)= 1+q^2-2q^3+o(1).
    \end{equation}
    
    Let us similarly give a recursive formula for $w_{\lambda}^0$ (conditional expected payoff using $\sigma^*$ given that the future starts with $0$), using dynamic programming. Given that the future begins with $0$, then the future begins with $00$ with probability $q$ and for any $k\geq 0$ the future begins with $010^k1$ with probability $p^2q^k$. Hence, one obtains by definition of $\sigma^*$:
    \begin{align*}
        w_{\lambda}^0 = q(\lambda p+(1-\lambda)w_\lambda) & + \sum_{k'=0}^{+\infty} p^2q^{3k'}(\lambda(1-\lambda)^{k'}+(1-\lambda)^{k'+1}w_\lambda) + \sum_{k'=0}^{+\infty} p^2q^{3k'+1}((1-\lambda)^{k'+1}w_{\lambda}^0)\\
        &+\sum_{k'=0}^{+\infty} p^2q^{3k'+2}(\lambda(1-\lambda)+\lambda(1-\lambda)^{k'+2}+(1-\lambda)^{k'+3}w_\lambda).
    \end{align*}
    Denote $b_\lambda:=w_\lambda-w_{\lambda}^0$. Regrouping terms,
    \[
        \begin{aligned}
            w_\lambda-b_\lambda ={}&\frac{a_\lambda qp^2(1-\lambda)}
                    {1-(1-\lambda)q^3}\\
                &+w_\lambda\left((1-\lambda)q+\frac{p^2\bigl((1-\lambda (1+q)+(1-\lambda)^3q^2\bigr)}{1-(1-\lambda)q^3}
            \right)\\
                &+\lambda pq+\lambda p^2\left(
            \frac{(1-\lambda)q^2}{1-q^3}
            +\frac{1+(1-\lambda)^2q^2}{1-(1-\lambda)q^3}
            \right).
        \end{aligned}
    \]
    We once again neglect all terms  in $o(\lambda)$, using in addition that (by a standard coupling argument) $|b_\lambda|\leq \lambda$:
    \begin{equation*}
        w_\lambda-b_\lambda = a_\lambda\frac{qp}{1+q+q^2} + w_\lambda \left(1-\lambda q-\lambda p \frac{1+q+3q^2}{1+q+q^2}-\lambda \frac{q^3}{1+q+q^2} \right) + \lambda p \left(q+\frac{1+2q^2}{1+q+q^2}\right)+o(\lambda),
    \end{equation*}
    which becomes, after subtraction of $w_\lambda$, division by $\lambda$ and multiplication by $1+q+q^2$:
    \begin{equation}\label{eqfuture0}
        q(1-q) \frac{a_\lambda}{\lambda}+(1+q+q^2)\frac{b_\lambda}{\lambda}+1+2q^2-2q^3-q^4=w_\lambda(1+q+3q^2-q^3)+o(1).
    \end{equation}
    
    Finally, we get a third equation by simply conditioning on the immediate future:
    \[
        w_\lambda=pw_{\lambda}^1+qw_{\lambda}^0,
    \]
    hence $pa_\lambda=q b_\lambda$. Replacing $a_\lambda$ in equations \eqref{eqfuture1} and \eqref{eqfuture0} one gets the system
    \begin{align*}
        w_\lambda(1+q^2-q^3)+\frac{b_\lambda}{\lambda}q^2(1+q)&= 1+q^2-2q^3+o(1)\\
        (1+q+2q^2)\frac{b_\lambda}{\lambda}+1+2q^2-2q^3-q^4&=w_\lambda(1+q+3q^2-q^3)+o(1),
    \end{align*}
    which yields 
    \begin{equation*}
        w_\lambda =\frac{1+q+4q^2+2q^4-4q^5-3q^6-q^7}{1+q+4q^2+2q^3+5q^4-q^6} + o(1) =\frac{p\left(28 - 90p + 127p^2 - 98p^3 + 43p^4 - 10p^5 + p^6\right)}{12 - 29p + 25p^2 - 2p^3 - 10p^4 + 6p^5 - p^6}+o(1),
    \end{equation*}
    and the result follows by letting $\lambda$ tend to 0. Finally, by continuity 
    \[
        \frac{1}{2}=v_\infty(p_2)=\frac{p_2\left(28 - 90p_2 + 127p_2^2 - 98p_2^3 + 43p_2^4 - 10p_2^5 + p_2^6\right)}{12 - 29p_2 + 25p_2^2 - 2p_2^3 - 10p_2^4 + 6p_2^5 - p_2^6}.
    \] 
    Hence $p_2$ is a root of $2X^7-19X^6+80X^5-186X^4+256X^3-205X^2+85X-12$ which factorizes into $(2X^3-5X^2+5X-1)(X^4-7X^3+20X^2-25X+12)$ and has only $p^*$ as a real root. Hence $p_2=p^*$.
    
    \textbf{Sixth step: no optimal myopic strategy when $p^*<p<1$}. A full proof would involve cumbersome computations so we only give a sketch of the proof. Fix $k\geq2$ and let $\sigma$ be a $k$-myopic optimal strategy. First, note that $\sigma$ must satisfy $\sigma(11)=+1$, or else +1+1+1 would be a strictly profitable deviation from $+3$ in some block of length $k$ starting with $11$. A consequence is that whenever there is a sequence of 4 consecutive states with a payoff of 1, any optimal play must at some point land on the third of these four states.
    
    Consider now $\sigma(10^{k-2})$. There are two possible cases.
    
    Assume first that $\sigma(10^{k-2})=+3$. Fix now  $k'$ such that $3k'\geq k-2$ and consider any state $z$ such that future payoffs start by $10^{3k'+2}1001111$. By the previous observation, the play will at some point land on the penultimate 1, which is state $z+3k'+9$.  Let $m$ be the number of $+1$ moves played between state $z$ and state $z+3k'+7$. Clearly $m=0[3]$, and it is easy to see that neither $m=0$ nor $m\geq 6$ are optimal, hence $m=3$. Because of the starting move of $+3$, the best outcome $\sigma$ can achieve is then to collect 4 payoffs of 1 and $k'+1$ payoffs of 0 between state $z$ and state $z+3k'+9$. However a play of $+1$, then $k'+2$ consecutive $+3$ and then two $+1$ would collect $5$ payoffs of 1 and  $k'$ payoffs of 0 between state $z$ and state $z+3k'+8$, which is strictly better. Since $p<1$, a block consisting of $10^{3k'+2}1001111$ will happen with positive frequency, and we can thus construct a strictly profitable deviation from $\sigma$ by playing instead $k'+2$ consecutively $+3$ and then $2$ moves of $+1$ during every such block.
    
    Assume now that $\sigma(10^{k-2})=+1$. Fix $k'$ such that $3k'+1\geq k-2$ and consider any state $z$ such that future payoffs starts by $10^{3k'+1}1001111$. Once again, the play will at some point land on the penultimate 1, which is state $z+3k'+8$. Let $m$ be the number of $+1$ moves played between state $z$ and state $z+3k'+8$. Clearly $m=2[3]$. If $m=2$, because of the starting move of $+1$, the best outcome $\sigma$ can achieve is to collect 3 payoffs of 1 and $k'+1$ payoffs of 0 between state $z$ and state $z+3k'+8$. If $m=5$, the best outcome $\sigma$ can achieve is to collect 5 payoffs of 1 and $k'+1$ payoffs of 0 between state $z$ and state $z+3k'+8$, and the situation is strictly worse if $m\geq 8$. However, a play of $k'+2$ consecutively $+3$ and then $2$ moves of $+1$ would collect $4$ payoffs of 1 and  $k'$ payoffs of 0 between state $z$ and state $z+3k'+8$. This is clearly strictly better than 3 payoffs of 1 and $k'+1$ payoffs of 0 ; and because we assumed $p>p^*$ this is also strictly better than 5 payoffs of 1 and $k'+1$ payoffs of 0. Since $p<1$, a block consisting of $10^{3k'+1}1001111$ will happen with positive frequency, and we can thus construct a strictly profitable deviation from $\sigma$ by playing instead $k'+2$ consecutively $+3$ and then $2$ moves of $+1$ during every such block.

	\textbf{When $p \in \{0, 1\}$}, vertex payoffs are $p$ with probability 1. So a myopic strategy is optimal and $v_{\infty}(p) = p$.
    
    \end{proof}
    
    \textbf{Acknowledgments.} Both authors warmly thank Sylvain Sorin and Bruno Ziliotto for helpful suggestions. The first author also thanks Avelio Sepúlveda and Jérôme Renault for their advice and insightful questions.   

	% \listoftodos
	\newpage
	\thispagestyle{empty}
	\bibliography{references}
	\bibliographystyle{ieeetr}	

\end{document}

%% file: preamble.tex
\usepackage[
    left=2.5cm,
    right=2.5cm,
    top=2.5cm,
    bottom=3.5cm
]{geometry}

\usepackage{xcolor}
\usepackage{graphicx}
\usepackage{subcaption}
\usepackage{hyperref}
\usepackage{appendix}
\usepackage{enumitem}
\usepackage{todonotes}

\usepackage{amsmath,amssymb,amsthm}
\usepackage{mathrsfs} % para \mathscr
\usepackage{mathtools} % para usar \floor..
\usepackage{bbm}
\usepackage{dsfont}

\usepackage{tikz}
\usetikzlibrary{arrows.meta,decorations.pathmorphing}

\usepackage{tocloft}
\usepackage{titlesec}
\titleformat{\section}
  {\normalfont\bfseries}
  {\thesection}
  {1em}
  {}
\titleformat{\subsection}
  {\normalfont\bfseries}
  {\thesubsection}
  {1em}
  {}

\usepackage{fancyhdr}
\fancypagestyle{plain}{
    \fancyhf{}
    \fancyfoot[C]{\thepage}

}

\theoremstyle{plain}
\newtheorem{theorem}{Theorem}[section]
\newtheorem{lemma}[theorem]{Lemma}

\newtheorem{proposition}[theorem]{Proposition}
\newtheorem{conjecture}[theorem]{Conjecture}

\theoremstyle{definition}
\newtheorem{definition}[theorem]{Definition}
\newtheorem{remark}[theorem]{Remark}
\newtheorem{example}{Example}

\renewenvironment{proof}
    {\textit{Proof.}\setlength{\parindent}{0pt}}
    {\qed}

\newcommand{\R}{\mathbb{R}}
\newcommand{\Q}{\mathbb{Q}}
\newcommand{\Z}{\mathbb{Z}}
\newcommand{\N}{\mathbb{N}}

\newcommand{\E}{\mathbb{E}}
\newcommand{\Pbb}{\mathbb{P}}
\newcommand{\F}{\mathscr{F}}

\newcommand{\Mod}[1]{\ (\mathrm{mod}\ #1)}

\makeatletter
\providecommand\@dotsep{5}
\renewcommand{\listoftodos}[1][\@todonotes@todolistname]{%
    \@starttoc{tdo}{#1}
}
\makeatother